\newcommand\nnfootnote[1]{   %MSC footnote
\begin{NoHyper}
\renewcommand\thefootnote{}\footnote{#1}
\addtocounter{footnote}{-1}
\end{NoHyper}}
\newtheorem{thm}{Theorem}[section]
\newtheorem{lemma}[thm]{Lemma}
\newtheorem{prop}[thm]{Propsition}
\newtheorem{thmx}{Theorem}
\theoremstyle{definition}
\newtheorem{definition}[thm]{Definition}
\newtheorem{remark}[thm]{Remark}
\newcommand\ddd{\mathrm{d}}
\newcommand\supp{\mathrm{supp}}
\newcommand\holder{Hölder's~}
\newcommand\schrodinger{Schrödinger~}
\newcommand\bR{\mathbb{R}}
\newcommand\bN{\mathbb{N}}
\newcommand\bZ{\mathbb{Z}}
\newcommand\bS{\mathbb{S}}
\def \l {\left}
\def \r {\right}
\begin{document}
\title{Extremals for $\alpha$-Strichartz inequalities}

\setcounter{footnote}{0}
\author{Boning Di, Dunyan Yan}
\date{}
\maketitle
\nnfootnote{{2020 \emph{Mathematics Subject Classification}: Primary 42A38; Secondary 35B38, 35Q41.}}
\nnfootnote{\emph{Key words and phases}: Sharp Fourier restriction theory, extremals, $\alpha$-Strichartz inequalities, profile decomposition.}
\begin{abstract}
A necessary and sufficient condition on the precompactness of extremal sequences for one dimensional $\alpha$-Strichartz inequalities, equivalently $\alpha$-Fourier extension estimates, is established based on the profile decomposition arguments. One of our main tools is an operator-convergence \textit{dislocation property} consequence which comes from the van der Corput Lemma. Our result is valid in asymmetric cases as well. In addition, we obtain the existence of extremals for non-endpoint $\alpha$-Strichartz inequalities.
\end{abstract}

\section{Introduction}
For $\alpha>1$, we investigate the following symmetric $\alpha$-Strichartz inequality
\begin{equation}\label{equation_symmetric alpha-Strichartz}
\l\|[D^{\frac{\alpha-2}{6}}][e^{it|\nabla|^{\alpha}}]u\r\|_{L_{t,x}^6(\bR^{2})}\leq \mathbf{M}_{\alpha} \|u\|_{L_x^2(\bR)},
\end{equation}
where
\[\mathbf{M}_{\alpha}:=\sup\l\{\l\|[D^{\frac{\alpha-2}{6}}][e^{it|\nabla|^{\alpha}}]u\r\|_{L_{t,x}^{6}(\bR^{2})}: \|u\|_{L_x^2(\bR)}=1\r\}\]
is the sharp constant and
\[[e^{it|\nabla|^{\alpha}}]u(x):=\mathscr{F}^{-1}e^{-it|\xi|^{\alpha}}\mathscr{F}[u](x),\quad [D^s]u(x):=\mathscr{F}^{-1}|\xi|^s\mathscr{F}[u](x), \quad \mathscr{F}[u](\xi):=\int_{\bR} e^{-ix\xi}u(x)\ddd x,\]
with $\mathscr{F}$ denoting the spatial Fourier transform. This estimate \eqref{equation_symmetric alpha-Strichartz} comes from Kenig et al. \cite[Theorem 2.3]{KPV_1991} which is also named Fourier extension estimate. Moreover it says that, for every $\alpha>1$, there holds the (mixed norm) asymmetric $\alpha$-Strichartz inequality
\begin{equation}\label{equation_asymmetric alpha-Strichartz}
\l\|[D^{\frac{\alpha-2}{q}}][e^{it|\nabla|^{\alpha}}]u\r\|_{L_t^q L_x^r(\bR^2)} \leq \tilde{\mathbf{M}}_{\alpha, q,r} \|u\|_{L_x^2(\bR)},
\end{equation}
where $2/q+1/r=1/2$ with the sharp constant $\tilde{\mathbf{M}}_{\alpha, q,r}$ defined by
\[\tilde{\mathbf{M}}_{\alpha,q,r}:=\sup\l\{\l\|[D^{\frac{\alpha-2}{q}}][e^{it|\nabla|^{\alpha}}]u\r\|_{L_t^q L_x^r(\bR^2)}: \|u\|_{L_x^2(\bR)}=1\r\}.\]
We call the pairs $(q,r)=(\infty,2)$ and $(q,r)=(4,\infty)$ \textit{endpoint pairs}. Otherwise the pairs $(q,r)$ are called \textit{non-endpoint pairs}.

The \textit{symmetries} for these $\alpha$-Strichartz inequalities, on the $L_x^2$ side, are time-space translations and scaling as follows
\[[g_n^{\mathrm{sym}}]u:=[e^{it_n|\nabla|^{\alpha}}]\l[(h_n)^{-1/2}u\l(\frac{\cdot-x_n}{h_n}\r)\r], \quad (h_n,x_n,t_n)\in \bR_{+}\times \bR\times\bR;\]
and the \textit{associated group} $G^{\mathrm{sym}}$ is defined by
\[G^{\mathrm{sym}}:=\Big\{[g_n^{\mathrm{sym}}]: (h_n,x_n,t_n)\in \bR_{+}\times \bR\times\bR\Big\}.\]
To state the results more precisely, we say a sequence of functions $(f_n)$ in $L^2(\bR)$ is \textit{precompact up to symmetries} if there exists a sequence of symmetries $([g_n^{\mathrm{sym}}])$ in $G^{\mathrm{sym}}$ such that $\l([g_n^{\mathrm{sym}}]f_n\r)$ is precompact in $L^2(\bR)$. On the other hand, a sequence of functions $(f_n)$ in $L^2(\bR)$ \textit{concentrates at a point} $x_0\in\bR$ if for arbitrary $\varepsilon,\rho>0$, there exists $N\in\bN_{+}$ such that for every $n>N$, there holds
\[\int_{|x-x_0|\geq\rho}|f_n(x)|^2\ddd x\leq \varepsilon \|f_n\|_{L^2(\bR)}^2.\]
Meanwhile a sequence of functions $(f_n)$ in $L^2(\bR)$ is an \textit{extremal sequence} for $\tilde{\mathbf{M}}_{\alpha, q,r}$ if it satisfies
\[\|f_n\|_{L^2(\bR)}=1,\quad \lim_{n\to\infty} \l\|[D^{\frac{\alpha-2}{6}}] [e^{it|\nabla|^{\alpha}}] f_n\r\|_{L_{t,x}^{6}(\bR^{2})}=\tilde{\mathbf{M}}_{\alpha, q,r}.\]
The sharp Fourier restriction theory, more generally the sharp constant theory, has been an important part in harmonic analysis. Readers are referred to the survey \cite{FO_2017} and the references therein for some recent progress on sharp Fourier restriction theory. One of the more recent results is
\begin{thmx}[\cite{BOQ_2020}]\label{Thm_extremals-symmetric}
All the extremal sequences for $\mathbf{M}_{\alpha}$ are precompact up to symmetries if and only if
\begin{equation}\label{Thm_extremals-symmetric_1}
\mathbf{M}_{\alpha}>\l[\sqrt{3}\alpha(\alpha-1)\r]^{-\frac{1}{6}}.
\end{equation}
In particular, if the strict inequality \eqref{Thm_extremals-symmetric_1} holds, then there exists an extremal for $\mathbf{M}_{\alpha}$. If on the contrary the equality holds in \eqref{Thm_extremals-symmetric_1}, then given any $x_0\in \bR$, there exists an extremal sequence for $\mathbf{M}_{\alpha}$ which concentrates at $x_0$.
\end{thmx}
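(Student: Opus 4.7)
The plan is to apply a profile decomposition for $L^2$-bounded sequences adapted to the $\alpha$-Strichartz propagator and combine it with a dislocation-type decoupling of the $L^6_{t,x}$ norm coming from the van der Corput lemma referenced in the abstract. Given any extremal sequence $(f_n)$ with $\|f_n\|_{L^2}=1$, I write, for each $J \ge 1$,
\[ f_n = \sum_{j=1}^{J}[g_n^j]\phi^j + r_n^J, \]
with asymptotically orthogonal symmetry parameters $(h_n^j, x_n^j, t_n^j)$ and possibly an additional frequency parameter $\xi_n^j$ outside $G^{\mathrm{sym}}$, fixed profiles $\phi^j \in L^2$ satisfying $\sum_j \|\phi^j\|_{L^2}^2 \le 1$, and a remainder with $\limsup_{n\to\infty}\|[D^{(\alpha-2)/6}][e^{it|\nabla|^\alpha}]r_n^J\|_{L^6_{t,x}} \to 0$ as $J \to \infty$. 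The dislocation property then decouples the $L^6_{t,x}$ norm profile-by-profile, and each profile contribution is bounded by one of two sharp constants: if the parameters stay in a compact regime, the bound is $\mathbf{M}_\alpha \|\phi^j\|_{L^2}$; if $\xi_n^j h_n^j \to \infty$ (a frequency-concentrating profile), rescaling and Taylor expansion of $|\xi|^\alpha$ around $\xi_n^j$ identifies the operator, up to a van der Corput error in the cubic remainder, with a rescaled 1D Schrödinger propagator, yielding the bound $M_0 \|\phi^j\|_{L^2}$, where $M_0 := [\sqrt{3}\alpha(\alpha-1)]^{-1/6}$ is the product of Foschi's sharp 1D Schrödinger constant $12^{-1/12}$ with the Hessian factor $[\alpha(\alpha-1)/2]^{-1/6}$.

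For the sufficiency direction, assume $\mathbf{M}_\alpha > M_0$. The decoupling together with the $\ell^2 \hookrightarrow \ell^6$ embedding gives
\[ \mathbf{M}_\alpha^6 = \lim_{n\to\infty}\l\|[D^{(\alpha-2)/6}][e^{it|\nabla|^\alpha}]f_n\r\|_{L^6_{t,x}}^6 \le \sum_j B_j^6 \|\phi^j\|_{L^2}^6 \le \l(\max_j B_j\r)^6 \l(\sum_j \|\phi^j\|_{L^2}^2\r)^3 \le \mathbf{M}_\alpha^6, \]
with $B_j \in \{\mathbf{M}_\alpha, M_0\}$. Equality throughout forces every nonzero profile to have $B_j = \mathbf{M}_\alpha$ (so to lie in the compact parameter regime) and exactly one profile to be nonzero with $\|\phi^1\|_{L^2}=1$. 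Consequently $[g_n^1]^{-1}f_n \to \phi^1$ strongly in $L^2$, which gives precompactness of $(f_n)$ up to $G^{\mathrm{sym}}$ and furnishes $\phi^1$ as an extremizer.

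For the necessity direction, assume equality in \eqref{Thm_extremals-symmetric_1}, fix $x_0 \in \bR$, and let $\psi$ be a Gaussian extremizer of the 1D $L^6_{t,x}$ Schrödinger inequality (Foschi), normalised to $\|\psi\|_{L^2}=1$. Choose $h_n\to 0$ and $\xi_n\to\infty$ with $h_n\xi_n\to\infty$, and set
\[ f_n(x) := h_n^{-1/2}\, e^{i\xi_n(x-x_0)}\,\psi\!\l(\tfrac{x-x_0}{h_n}\r). \]
A direct computation gives $\|f_n\|_{L^2}=1$ and shows that $(f_n)$ concentrates at $x_0$. Expanding $|\xi|^\alpha$ around $\xi_n$ and changing variables to $(s, y) = \l(t\alpha(\alpha-1)\xi_n^{\alpha-2}/(2h_n^2),\;(x-x_0)/h_n\r)$, a van der Corput estimate on the cubic remainder yields
\[ \l\|[D^{(\alpha-2)/6}][e^{it|\nabla|^\alpha}]f_n\r\|_{L^6_{t,x}}^6 \longrightarrow \frac{2}{\alpha(\alpha-1)} \cdot 12^{-1/2} = M_0^6 = \mathbf{M}_\alpha^6. \]
Hence $(f_n)$ is an extremal sequence concentrating at $x_0$. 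Since, after any admissible rescaling and translation, the sequence retains the high-frequency modulation $e^{i\xi_n h_n y}$ with $\xi_n h_n \to \infty$, and the time-evolution $e^{-it_n|\nabla|^\alpha}$ in $G^{\mathrm{sym}}$ can only produce a frequency-dependent phase rather than a spatial modulation (for $\alpha\neq 2$ there is no Galilean boost), no element of $G^{\mathrm{sym}}$ can cancel this oscillation, so $(f_n)$ is not precompact up to symmetries.

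The main obstacle is the van der Corput based dislocation step, specifically the asymptotic identification of $[D^{(\alpha-2)/6}][e^{it|\nabla|^\alpha}]$ on frequency-concentrating profiles with the rescaled Schrödinger propagator along with the exact constant $M_0$. The cubic-and-higher remainder in the Taylor expansion of $|\xi|^\alpha$ must be controlled uniformly in the profile parameters, and the cross terms in $\|\sum_j [D^{(\alpha-2)/6}][e^{it|\nabla|^\alpha}][g_n^j]\phi^j\|_{L^6_{t,x}}^6$ must be shown to vanish asymptotically from the orthogonality of the parameters; both are precisely where the operator-convergence dislocation consequence of van der Corput advertised in the abstract must do its work.
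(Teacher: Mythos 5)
Your proposal is correct and follows essentially the same route as the paper: profile decomposition with frequency parameters, $L^6_{t,x}$ decoupling of the profiles via the van der Corput/dislocation orthogonality, the convexity argument $\sum_j\|\phi^j\|_{L^2}^6\leq(\sum_j\|\phi^j\|_{L^2}^2)^3$ forcing a single full-mass profile, the asymptotic Schrödinger lemma with constant $[\alpha(\alpha-1)/2]^{-1/6}\mathbf{M}_2$ to rule out (or, in the equality case, realize) the frequency-escaping regime, and the modulated rescaled Gaussian $h_n^{-1/2}e^{i\xi_n(x-x_0)}\psi((x-x_0)/h_n)$ with $h_n\xi_n\to\infty$ as the concentrating, non-precompact extremal sequence. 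The only cosmetic difference is that you fold the two sharp constants $\{\mathbf{M}_\alpha,M_0\}$ into the decoupled sum at once, whereas the paper first uses $\mathbf{M}_\alpha$ uniformly and only afterwards applies the asymptotic Schrödinger lemma to the single surviving profile.
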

This result is previously obtained by Brocchi et al. \cite[Theorem 1.3]{BOQ_2020}. The proof there uses a variant of Lions' \textit{concentration-compactness} lemma from \cite{Lions_1984a, Lions_1984b} together with a variant of Brézis-Lieb lemma from \cite{BL_1983, Lieb_1983}. As pointed out in \cite{BOQ_2020}, various of results with a similar condition to \eqref{Thm_extremals-symmetric_1} have been studied in recent literature. In our paper, this condition comes from the asymptotic \schrodinger behavior Lemma \ref{Lemma_asymptotic schrodinger}\footnote{In view of Frank-Sabin \cite[Remark 2.6]{FS_2018}, this behavior may also be called \textit{approximate symmetries}.}, see also Remark \ref{Remark_asymptotic schrodinger} and Remark \ref{Remark_asymptotic schrodinger_asymmetric}. Roughly speaking, to get the existence of extremals, there may be some strict inequality conditions like \eqref{Thm_extremals-symmetric_1} to rule out some concentrate-type situations which deduce the loss of compactness. We refer to \cite{CS_2012_A&P, FLS_2016, FS_2018} for more discussions on these type of conditions in the low dimensional sphere and cubic curve cases.

The main purpose of this article is investigating the extremal problems for $\alpha$-Strichartz inequalities by means of \textit{profile decomposition arguments}. One of our results, Theorem \ref{Thm_extremals asymmetric} below, generalizes the aforementioned Theorem \ref{Thm_extremals-symmetric} to asymmetric cases. Furthermore as an application of our profile decomposition consequences, for $\alpha\geq2$, we also give the existence of extremals for \textit{non-endpoint $\alpha$-Strichartz inequalities} \eqref{equ_nonendpoint alpha-Strichartz} which will be presented later\footnote{We follow this \textit{non-endpoint} terminology from Hundertmark-Shao \cite{HS_2012}.}. The key ingredient here to establish this generalized profile decomposition Proposition \ref{Prop_linear profile decomposition} is a \textit{conditional dislocation property} consequence Proposition \ref{Prop_dislocation property} on the weak operator topology convergence for some $L^2$-unitary operators. Now we state our first main result as follows.
\begin{thm}\label{Thm_extremals asymmetric}
For the non-endpoint pairs $(q,r)$, all the extremal sequences for $\tilde{\mathbf{M}}_{\alpha, q,r}$ are precompact up to symmetries if and only if
\begin{equation}\label{Thm_extremals asymmetric_1}
\tilde{\mathbf{M}}_{\alpha, q,r}>\l(\frac{\alpha^2-\alpha}{2}\r)^{-\frac{1}{q}} \tilde{\mathbf{M}}_{2,q,r}.
\end{equation}
In particular, if the strict inequality \eqref{Thm_extremals asymmetric_1} holds, then there exists an extremal for $\tilde{\mathbf{M}}_{\alpha, q,r}$. If on the contrary the equality holds in \eqref{Thm_extremals asymmetric_1}, then given any $x_0\in \bR$, there exists an extremal sequence for $\tilde{\mathbf{M}}_{\alpha, q,r}$ which concentrates at $x_0$.
\end{thm}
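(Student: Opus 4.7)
The plan is to reduce both directions to the generalized linear profile decomposition (Proposition~\ref{Prop_linear profile decomposition}) combined with the asymptotic \schrodinger behavior (Lemma~\ref{Lemma_asymptotic schrodinger}), following the outline behind Theorem~\ref{Thm_extremals-symmetric} but in the mixed norm $L_t^q L_x^r$. For the sufficiency direction I would start with an arbitrary extremal sequence $(f_n)$ for $\tilde{\mathbf{M}}_{\alpha,q,r}$ and apply Proposition~\ref{Prop_linear profile decomposition} to produce profiles $\phi^j\in L^2(\bR)$, pairwise asymptotically orthogonal symmetry parameters $(h_n^j,x_n^j,t_n^j)\in\bR_+\times\bR\times\bR$ with associated $[g_n^j]\in G^{\mathrm{sym}}$, and remainders $r_n^J$ such that
\[
f_n=\sum_{j=1}^{J}[g_n^j]\phi^j+r_n^J,\qquad \sum_j\|\phi^j\|_{L^2}^2\le 1,
\]
with the $\alpha$-Strichartz norm of $r_n^J$ vanishing as $J\to\infty$ uniformly in $n$. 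The operator-convergence dislocation property (Proposition~\ref{Prop_dislocation property}) then upgrades this decomposition to asymptotic orthogonality in $L_t^q L_x^r$, so that the $q$-th power of the Strichartz norm of $f_n$ decomposes (up to a vanishing error) into the sum of the $q$-th powers of the Strichartz norms of the individual bubbles plus that of the remainder.

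Bounding each bubble separately is the crux. If the scale $h_n^j$ stays in a compact subset of $\bR_+$ along a subsequence, scaling invariance of the $\alpha$-Strichartz inequality gives the bound $\tilde{\mathbf{M}}_{\alpha,q,r}\|\phi^j\|_{L^2}$. If instead $h_n^j\to 0$ or $h_n^j\to\infty$, Lemma~\ref{Lemma_asymptotic schrodinger} forces $|\xi|^\alpha$ to linearize around the relevant frequency scale and upgrades the bound to the strictly smaller
\[
\l(\frac{\alpha^2-\alpha}{2}\r)^{-1/q}\tilde{\mathbf{M}}_{2,q,r}\,\|\phi^j\|_{L^2}.
\]
Combining these two cases with $\sum_j\|\phi^j\|_{L^2}^q\le(\sum_j\|\phi^j\|_{L^2}^2)^{q/2}\le 1$ (valid since $q>2$ for non-endpoint pairs) and the strict inequality~\eqref{Thm_extremals asymmetric_1}, I conclude that every degenerate-scale bubble must be trivial, the entire $L^2$ mass must collect on a single non-degenerate bubble $\phi^1$ with $\|\phi^1\|_{L^2}=1$, and the remainder must vanish in $L^2$. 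Consequently $([g_n^1]^{-1}f_n)\to\phi^1$ strongly in $L^2(\bR)$, which yields both precompactness up to symmetries and the existence of an extremal.

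For the necessity direction, I argue by construction: assuming equality in~\eqref{Thm_extremals asymmetric_1} and a prescribed point $x_0\in\bR$, I would transfer any \schrodinger extremal sequence for $\tilde{\mathbf{M}}_{2,q,r}$ into an $\alpha$-Strichartz extremal sequence via the degenerate branch of Lemma~\ref{Lemma_asymptotic schrodinger}, and calibrate the scale and translation parameters so that the resulting sequence concentrates at $x_0$ in the sense of the paper's definition. The same sequence, by virtue of its degenerating scale and the accompanying frequency escape built into the asymptotic \schrodinger regime, also fails to be precompact up to symmetries, closing the equivalence.

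The main obstacle I anticipate is the asymptotic $L_t^q L_x^r$ orthogonality of the profile decomposition when $q\neq r$: the straightforward Brézis--Lieb-type arguments sufficient for the symmetric $L_{t,x}^6$ case of Theorem~\ref{Thm_extremals-symmetric} do not directly handle the mixed norm, so one must rely on the full strength of the operator-theoretic dislocation property (Proposition~\ref{Prop_dislocation property}). A secondary delicate point is isolating the exact prefactor $(\alpha(\alpha-1)/2)^{-1/q}\tilde{\mathbf{M}}_{2,q,r}$ from the linearization of $|\xi|^\alpha$ uniformly across the bubble decomposition, so that the strict inequality~\eqref{Thm_extremals asymmetric_1} can be compared directly with the summed contributions of all bubbles.
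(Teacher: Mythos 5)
Your overall architecture matches the paper's: profile decomposition (Proposition \ref{Prop_linear profile decomposition}), a mixed-norm decoupling of the profiles, a dichotomy resolved by the asymptotic \schrodinger behavior, and an explicit concentrating construction in the equality case. However, there is a genuine error in the case analysis at the crux of the sufficiency direction. You split the bubbles according to whether the \emph{scale} $h_n^j$ stays in a compact subset of $\bR_+$ or degenerates, and you assign the smaller constant $\bigl(\tfrac{\alpha^2-\alpha}{2}\bigr)^{-1/q}\tilde{\mathbf{M}}_{2,q,r}$ to the degenerate-scale bubbles. This cannot be right: the scaling $u\mapsto h^{-1/2}u(\cdot/h)$ is a symmetry of the $\alpha$-Strichartz inequality (it belongs to $G^{\mathrm{sym}}$), so a bubble with $h_n^j\to 0$ or $\infty$ and trivial frequency still saturates the full constant $\tilde{\mathbf{M}}_{\alpha,q,r}\|\phi^j\|_{L^2}$ and causes no loss of compactness up to symmetries. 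The parameter whose degeneration produces the \schrodinger limit is the \emph{rescaled frequency} $h_n^j\xi_n^j$: after undoing the symmetries, the $j$-th bubble's Strichartz norm equals that of $e^{i(\cdot)h_n^j\xi_n^j}\phi^j$, and the construction (Lemma \ref{Lemma_frequency and scaling}, Remark \ref{Remark_space and time translations}) reduces to the dichotomy $h_n^j\xi_n^j\equiv 0$ versus $|h_n^j\xi_n^j|\to\infty$; only in the latter case does Lemma \ref{Lemma_asymmetric asymptotic schrodinger} give the strictly smaller constant, and only that case is ruled out by \eqref{Thm_extremals asymmetric_1}. Conversely, a bubble with bounded scale but escaping frequency is exactly the non-precompact scenario, which your classification would wrongly place in the ``good'' branch. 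The same correction applies to your necessity construction: what must degenerate there is the modulation $e^{ix\xi_n}$ with $|\xi_n|\to\infty$ (coupled to a shrinking spatial scale to force concentration at $x_0$, as in the paper's $\sqrt{n}\,e^{in^2x}e^{-|n(x-x_0)|^2}$), not the scale alone.

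Two secondary points. First, the mixed-norm asymptotic orthogonality is not obtained in the paper directly from the dislocation property; it is Lemma \ref{Lemma_asymmetric orthogonal}, derived from the bilinear $L^3_{t,x}$ orthogonality \eqref{Prop_Strichartz-orthogonal profiles_1} by interpolation and the floor-function technique of Shao, and it yields only one-sided subadditivity at the exponent $\min\{q,r\}$ (both $>2$ for non-endpoint pairs), which is all the argument needs; your plan should use $\sum_j\|\phi^j\|_{L^2}^{\min\{q,r\}}\le\bigl(\sum_j\|\phi^j\|_{L^2}^2\bigr)^{\min\{q,r\}/2}$ rather than the exponent $q$ alone. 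Second, in the equality case one needs an actual extremal (or near-extremal) for $\tilde{\mathbf{M}}_{2,q,r}$ to seed the construction; existence for the classical case is supplied by the cited result of Shao, so this step is fine but should be stated.
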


As we have mentioned above, Theorem \ref{Thm_extremals asymmetric} extends the previous result \cite[Theorem 1.3]{BOQ_2020}. Meanwhile by taking some symmetries\footnote{Notice the construction of $\tilde{u}_n$ in the proof of Theorem \ref{Thm_extremals-symmetric} in Section \ref{Sec_Symmetric alpha-Strichartz extremals}.}, on the Fourier side, Theorem \ref{Thm_extremals asymmetric} claims that if the equality holds in \eqref{Thm_extremals asymmetric_1} then there exists an extremal sequence which concentrates at one fixed frequency. Thereby this Theorem \ref{Thm_extremals asymmetric}, in some sense, also coincides with the result in \cite{FS_2018} where the extremal sequence concentrates at two opposite frequencies due to some symmetries of the odd curves.

Here we make some historical remarks first. For the case $\alpha=2$ in \eqref{equation_symmetric alpha-Strichartz}, the classical Strichartz inequality (Stein-Tomas inequality for the paraboloid), abundant conclusions have been made: the existence of extremals is proved by Kunze \cite{Kunze_2003} for one dimensional case and by Shao \cite{Shao_2009_EJDE} for general dimensions; in low dimensions, up to symmetries, the only extremals are shown to be Gaussians by Foschi \cite{Foschi_2007} and Hundertmark-Zharnitsky \cite{HZ_2006} independently. Extremals are conjectured to be Gaussians in all dimensions \cite{HZ_2006}. Meanwhile, on the Stein-Tomas inequality for the sphere, we briefly mention that Christ-Shao \cite{CS_2012_A&P, Shao_2016} give the existence of extremals in low dimensions and Foschi \cite{Foschi_2015} shows that the extremals are constants for two-dimension sphere $\bS^2$. We refer to \cite{FO_2017, OQ_2021} and the references therein for more recent results on the sharp Fourier restriction theory in the sphere situation.

As for the case $\alpha=4$ in \eqref{equation_symmetric alpha-Strichartz}, Jiang et al. \cite{JPS_2010, JSS_2017} give some dichotomy results on the existence of extremals by using the profile decomposition from \cite{BG_1999, BV_2007, CK_2007, Keraani_2001, MV_1998}. For more general case $\alpha>1$ in \eqref{equation_symmetric alpha-Strichartz}, Brocchi et al. \cite{BOQ_2020} resolves the dichotomy in \cite{JPS_2010} by using a \textit{geometric comparison principle} developed in \cite{OQ_2018} which resolves the dichotomy in \cite{JSS_2017}. As far as we know, there is no extremal result on the asymmetric $\alpha$-Strichartz inequality \eqref{equation_asymmetric alpha-Strichartz} with general $\alpha>1$, except for the classical $\alpha=2$ case in \eqref{equation_asymmetric alpha-Strichartz} which has been studied in some papers such as \cite{BBCH_2009, Carneiro_2009, Goncalves_2019, Shao_2009_EJDE}. Meanwhile, it should be mentioned that Frank-Sabin \cite{FS_2018} has studied the existence of extremals for Airy-Strichartz inequality (odd cubic curve), whose result is also valid for non-endpoint asymmetric cases, by using the \textit{missing mass method}.

Note that $\alpha>1$ may not be a natural number in our setting and this fact leads to some barriers. In order to establish the desired linear profile decomposition, one of the main results we should establish is the conditional dislocation property Proposition \ref{Prop_dislocation property} for some unitary operators on $L^2(\bR)$. We begin with the definitions for the \textit{dislocation group} and the $L^2$-unitary operators that, maybe non-compact, we are concerned about. For parameters $(h_0, x_0, \xi_0, \theta_0)\in  \bR_{+} \times \bR^d \times \bR^d\times \bR$, the unitary operators $g_0$ on $L_x^2(\bR^d)$ is defined by
\[[g_0]\phi(x):=g_{\theta_0, \xi_0, x_0, h_0}[\phi](x):=e^{i\theta_0}h_0^{-\frac{d}{2}}e^{i x\cdot \xi_0}\phi\l(\frac{x-x_0}{h_0}\r).\]
We should point out that the parameter $\theta_0$ is inessential and we use it just because it may be deduced from other parameters on the Strichartz space.
\begin{definition}[Dislocation group \cite{ST_2002}]
Let $H$ be a separable Hilbert space and let $G$ be a group of unitary operators on $H$. We said $G$ is a \textit{group of dislocations} if it satisfies the following condition: for every sequence $([g_n])\subset G$ does not converge weakly (in weak operator topology) to zero, there exists a renamed strongly convergent subsequence of $([g_n])$ such that the strong limit (in strong operator topology) is not zero.
\end{definition}

In the classical case $\alpha=2$, due to the Galilean invariance of classical \schrodinger equations, the dislocation property for the group generated by non-compact $L^2$-unitary operators is obvious. This potentially crucial fact, when establishing the classical profile decomposition, deduces the orthogonality of these decomposed profiles in Strichartz spaces. Hence it is a natural idea to generalize this dislocation property to the $\alpha$-Strichartz setting. On the other hand, we may do some adaption along the way we generalize it. The following conditional dislocation property proposition comes from the method of stationary phase, which is contained in \cite[Chapter 8]{Stein_1993} and \cite[Chapter 6]{Wolff_2003}, or more precisely the classical van der Corput Lemma \cite[p. 332, Proposition 2]{Stein_1993}.
\begin{prop}[Conditional dislocation property]\label{Prop_dislocation property}
When $d=1$, if we assume that for fixed $j\neq k$ either
\[\lim_{n\to\infty}\l(\frac{h_n^j}{h_n^k}+\frac{h_n^k}{h_n^j}+(h_n^j+h_n^k)\l|\xi_n^j-\xi_n^k\r|\r)=\infty\]
or $(h_n^j,\xi_n^j)\equiv (h_n^k,\xi_n^k)$. Then the group $G$, generated by the $L^2$-symmetries
\begin{equation}
[g_n^j]^{-1}[e^{it_n^j|\nabla|^{\alpha}}][e^{-it_n^k|\nabla|^{\alpha}}][g_n^k],
\end{equation}
is a group of dislocations provided that $\alpha\in \bZ_{+}$ and $\alpha\geq2$. Moreover, the group $G$ is a group of dislocations for all real numbers $\alpha>1$.
\end{prop}
\begin{remark}
As we shall see later in Lemma \ref{Lemma_frequency and scaling}, the assumptions in Proposition \ref{Prop_dislocation property} arise naturally during the construction of linear profile decomposition. Analogous assumptions can also be seen in \cite[Theorem 1.3]{JPS_2010} and \cite[p. 10]{JSS_2017} as well as some earlier papers such as \cite{BG_1999, CK_2007, Keraani_2001}. We will use this conditional dislocation property to describe the orthogonality for profiles instead of using the parameters therein, since we need to deal with the situation that $\alpha$ is not a natural number.
\end{remark}

The profile decomposition results are intensively studied and widely used in many topics. Besides some of the aforementioned references such as \cite{BG_1999, BV_2007, CK_2007, Keraani_2001, MV_1998} which establish these profile decompositions in different analysis situations, the profile decomposition may also be called \textit{bubble decomposition} in the literature due to some geometric background. We refer to \cite[p. 359]{KV_2013} for a historical discussion, see also \cite[p. 373]{KV_2013}. Here with the conditional dislocation property Proposition \ref{Prop_dislocation property} in place, we are able to show the following $\alpha$-Strichartz version linear profile decomposition.
\begin{prop}[Linear profile decomposition for the $\alpha$-Strichartz version]\label{Prop_linear profile decomposition}
Let $(u_n)$ be a bounded sequence in $L^2(\bR)$. Then, up to subsequences, there exist a sequence of operators $([T_n^j])$ defined by
\[[T_n^{j}]\phi(x):=[e^{-it_n^j|\nabla|^{\alpha}}]\l[(h_n^j)^{-\frac{1}{2}}e^{i(x-x_n^j)\xi_n^j}\phi\l(\frac{x-x_n^j}{h_n^j}\r)\r]\]
with $(h_n^j, x_n^j, \xi_n^j, t_n^j) \in \bR_{+}\times\bR\times\bR\times\bR$ and a sequence of functions $(\phi^j)\subset L^2(\bR)$ such that for every $J\geq1$, we have the profile decomposition
\begin{equation}\label{Prop_linear profile decomposition_1}
u_n=\sum_{j=1}^{J} [T_n^j]\phi^j+\omega_n^{J},
\end{equation}
where the decomposition possesses the following properties: firstly the remainder term $\omega_n^{J}$ has vanishing Strichartz norm
\begin{equation}\label{Prop_linear profile decomposition_2}
\lim_{J\to\infty}\limsup_{n\to\infty}\l\|[D^{\frac{\alpha-2}{6}}][e^{it|\nabla|^{\alpha}}]\omega_n^{J}\r\|_{L_{t,x}^6(\bR^{2})}=0;
\end{equation}
secondly the sequence of operators $[T_n^j]$ satisfies that if $j\neq k$, there holds the limit-orthogonality property
\begin{equation}\label{Prop_linear profile decomposition_3}
[T_n^k]^{-1}[T_n^j]\rightharpoonup 0
\end{equation}
as $n$ goes to infinity in the weak operator topology of $\mathcal{B}(L^2)$; moreover for each $J\geq1$, we have
\begin{equation}\label{Prop_linear profile decomposition_4}
\lim_{n\to\infty}\l[\|u_n\|_{L^2(\bR)}^2-\l(\sum_{j=1}^{J}\|\phi^j\|_{L^2(\bR)}^2\r)-\|\omega_n^{J}\|_{L^2(\bR)}^2\r]=0.
\end{equation}
\end{prop}

\begin{remark}\label{Remark_linear profile decomposition}
We should point out that the limit-orthogonality \eqref{Prop_linear profile decomposition_3} of the operators $[T_n^j]$ is crucial and powerful, especially when combined with the conditional dislocation property Proposition \ref{Prop_dislocation property}. By the $L^2$-almost orthogonal identity \eqref{Prop_linear profile decomposition_4}, it can be easily inferred that for every $j\neq k$ in Proposition \ref{Prop_linear profile decomposition}, there holds
\begin{equation}\label{Remark_linear profile decomposition_1}
\lim_{n\to\infty}\l\langle[T_n^j]\phi^j, [T_n^k]\phi^k\r\rangle_{L^2(\bR)}=0;
\end{equation}
and for each $j\leq J$, there holds
\begin{equation}\label{Remark_linear profile decomposition_2}
\lim_{n\to\infty}\l\langle[T_n^j]\phi^j, \omega_n^{J}\r\rangle_{L^2(\bR)}=0.
\end{equation}
\end{remark}

Meanwhile, the $\alpha$-Strichartz version profile decomposition Proposition \ref{Prop_linear profile decomposition} is equipped with the following Strichartz-orthogonality for the decomposed linear profiles.
\begin{prop}[Strichartz-orthogonality of profiles]\label{Prop_Strichartz-orthogonal profiles}
Furthermore, in the linear profile decomposition Proposition \ref{Prop_linear profile decomposition}, for $j\neq k$ there holds
\begin{equation}\label{Prop_Strichartz-orthogonal profiles_1}
\lim_{n\to\infty}\l\|[D^{\frac{\alpha-2}{6}}][e^{it|\nabla|^{\alpha}}][T_n^j]\phi^j \cdot [D^{\frac{\alpha-2}{6}}][e^{it|\nabla|^{\alpha}}][T_n^k]\phi^k\r\|_{L_{t,x}^{3}(\bR^{2})}=0.
\end{equation}
Thus for each $J\geq1$, by \holder inequality, there holds
\begin{equation}\label{Prop_Strichartz-orthogonal profiles_2}
\limsup_{n\to\infty}\l(\l\|\sum_{j=1}^J[D^{\frac{\alpha-2}{6}}][e^{it|\nabla|^{\alpha}}][T_n^j]\phi^j\r\|_{L_{t,x}^6}^{6} -\sum_{j=1}^J\l\|[D^{\frac{\alpha-2}{6}}][e^{it|\nabla|^{\alpha}}][T_n^j]\phi^j\r\|_{L_{t,x}^6}^{6}\r)=0.
\end{equation}
\end{prop}

Finally, as an application of our profile decomposition results, we turn to the following estimates. For $\alpha\geq2$, the result of Kenig et al. \cite[Theorem 2.3]{KPV_1991} and Sobolev inequalities imply the following \textit{non-endpoint $\alpha$-Strichartz estimates}
\begin{equation}\label{equ_nonendpoint alpha-Strichartz}
\l\|[e^{it|\nabla|^{\alpha}}]u\r\|_{L_{t,x}^{2\alpha+2}(\bR^2)}\leq \dot{\mathbf{M}}_{\alpha} \|u\|_{L_x^2(\bR)},
\end{equation}
where $\dot{\mathbf{M}}_{\alpha}$ is the sharp constant
\[\dot{\mathbf{M}}_{\alpha}:=\sup\l\{\l\|[e^{it|\nabla|^{\alpha}}]u\r\|_{L_{t,x}^{2\alpha+2}(\bR^{2})}: \|u\|_{L_x^2(\bR)}=1\r\}.\]
See, for instance, \cite[Theorem 2.4]{KPV_1991} for analogous arguments. In \cite{HS_2012}, Hundertmark and Shao give the existence of extremals for some similar non-endpoint Airy-Strichartz inequalities based on the profile decomposition of Airy-Strichartz version. Moreover they also establish the analyticity of these extremals on the Fourier space by using a bootstrap argument. In the spirit of their work and based on the generalized profile decomposition consequences obtained above, we show the existence of extremals for $\dot{M}_{\alpha}$ as a short incidental result.
\begin{thm}\label{Thm_extremals nonendpoint alpha-Strichartz}
For every $\alpha\geq 2$, there exists an extremal for $\dot{\mathbf{M}}_{\alpha}$.
\end{thm}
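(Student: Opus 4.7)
The plan is to apply the linear profile decomposition to an extremal sequence for $\dot{\mathbf{M}}_{\alpha}$ and exploit that the exponent $\alpha+1$ exceeds $1$. I first fix an extremal sequence $(u_n)\subset L^2(\bR)$ with $\|u_n\|_{L^2}=1$ and $\|[e^{it|\nabla|^{\alpha}}]u_n\|_{L^{2\alpha+2}_{t,x}}\to\dot{\mathbf{M}}_{\alpha}$. I then apply the version of Proposition \ref{Prop_linear profile decomposition} adapted to the non-endpoint Strichartz norm $L^{2\alpha+2}_{t,x}$, which I would produce by repeating its extraction procedure based on the inverse Strichartz inequality in this norm---the latter follows, for $\alpha\ge 2$, from the KPV asymmetric estimate \eqref{equation_asymmetric alpha-Strichartz} at the pair $(q,r)=(2\alpha+2,\,(2\alpha+2)/(\alpha-1))$ followed by a spatial Sobolev embedding, exactly as \eqref{equ_nonendpoint alpha-Strichartz} itself is derived. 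This yields, up to subsequences, a decomposition $u_n=\sum_{j=1}^{J}[T_n^j]\phi^j+\omega_n^{J}$ enjoying the $L^2$-almost-orthogonality \eqref{Prop_linear profile decomposition_4}, the operator limit-orthogonality \eqref{Prop_linear profile decomposition_3}, and the vanishing $\lim_{J}\limsup_{n}\|[e^{it|\nabla|^{\alpha}}]\omega_n^{J}\|_{L^{2\alpha+2}_{t,x}}=0$.

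Next, the $L^{2\alpha+2}_{t,x}$ analogue of Proposition \ref{Prop_Strichartz-orthogonal profiles}, which again follows from the conditional dislocation input Proposition \ref{Prop_dislocation property} via the van der Corput step, gives
\[\dot{\mathbf{M}}_{\alpha}^{2\alpha+2}=\lim_{n\to\infty}\l\|[e^{it|\nabla|^{\alpha}}]u_n\r\|_{L^{2\alpha+2}_{t,x}}^{2\alpha+2}=\sum_{j\geq 1}\lim_{n\to\infty}\l\|[e^{it|\nabla|^{\alpha}}][T_n^j]\phi^j\r\|_{L^{2\alpha+2}_{t,x}}^{2\alpha+2}.\]
Applying the sharp inequality \eqref{equ_nonendpoint alpha-Strichartz} to each profile and using the $L^2$-unitarity of $T_n^j$, together with the elementary bound $\sum_j a_j^{\alpha+1}\leq(\sup_j a_j)^{\alpha}\sum_j a_j$ for $a_j:=\|\phi^j\|_{L^2}^2$ and $\sum_j a_j\leq 1$ from \eqref{Prop_linear profile decomposition_4}, I obtain
\[\dot{\mathbf{M}}_{\alpha}^{2\alpha+2}\leq \dot{\mathbf{M}}_{\alpha}^{2\alpha+2}\sum_{j}\|\phi^j\|_{L^2}^{2\alpha+2}\leq \dot{\mathbf{M}}_{\alpha}^{2\alpha+2}\Bigl(\sup_{j}\|\phi^j\|_{L^2}^{2}\Bigr)^{\alpha}.\]
Hence $\sup_j\|\phi^j\|_{L^2}^2\geq 1$, and together with \eqref{Prop_linear profile decomposition_4} this forces exactly one profile, say $\phi^1$, to satisfy $\|\phi^1\|_{L^2}=1$, while all other $\phi^j=0$ and $\|\omega_n^J\|_{L^2}\to 0$.

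To conclude that $\phi^1$ is itself an extremal, I observe that $v_n:=[T_n^1]\phi^1$ is a unit-$L^2$ extremal sequence: the scaling, spatial translation, and time translation components of $T_n^1$ preserve the $L^{2\alpha+2}_{t,x}$ Strichartz norm by direct change of variables, so the only genuinely problematic piece is the frequency modulation $\xi_n^1$. Passing to a subsequence, either $(h_n^1\xi_n^1)$ remains bounded and can be absorbed into the profile by $L^2$-continuity of the Strichartz map, or $(h_n^1\xi_n^1)\to\infty$ and the asymptotic Schrödinger behavior from the introduction---the same ingredient appearing in Theorem \ref{Thm_extremals asymmetric}---reduces the surviving profile to one that saturates the sharp constant. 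The main obstacle is thus establishing the non-endpoint variants of Propositions \ref{Prop_linear profile decomposition} and \ref{Prop_Strichartz-orthogonal profiles} and handling the frequency modulation cleanly; once these are in place, the convexity closure is indeed a short step, matching the paper's description of Theorem \ref{Thm_extremals nonendpoint alpha-Strichartz} as a ``short incidental result''.
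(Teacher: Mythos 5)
Your overall strategy (a profile decomposition adapted to the $L^{2\alpha+2}_{t,x}$ norm, followed by the convexity bound $\sum_j a_j^{\alpha+1}\leq(\sup_j a_j)^{\alpha}\sum_j a_j$ to isolate a single full-mass profile) is the same as the paper's, and that part of your argument is sound. The genuine gap is in your final paragraph, where you dispose of the frequency modulations. In the regime $h_n^1\xi_n^1\to\infty$ you invoke the asymptotic \schrodinger behavior of Lemma \ref{Lemma_asymptotic schrodinger} / Lemma \ref{Lemma_asymmetric asymptotic schrodinger} and claim it ``reduces the surviving profile to one that saturates the sharp constant.'' Those lemmas concern the derivative-weighted norms $[D^{\frac{\alpha-2}{q}}][e^{it|\nabla|^{\alpha}}]$, where the weight $|\xi|^{\frac{\alpha-2}{q}}\sim|\xi_n|^{\frac{\alpha-2}{q}}$ exactly compensates the time-rescaling Jacobian and yields a finite nonzero limit; and even there the limit does not give existence --- it is precisely the mechanism that can destroy precompactness, which is why Theorems \ref{Thm_extremals-symmetric} and \ref{Thm_extremals asymmetric} need the strict inequalities \eqref{Thm_extremals-symmetric_1} and \eqref{Thm_extremals asymmetric_1}. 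If your argument genuinely reduced to that lemma, you would end up with a conditional statement of the same type, not the unconditional existence asserted in Theorem \ref{Thm_extremals nonendpoint alpha-Strichartz}.

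The fact you are missing --- and the only thing the paper actually proves in Section \ref{Sec_Extremals nonendpoint alpha-Strichartz} --- is that for the \emph{underivatived} norm one has
\[\lim_{|\xi_n|\to\infty}\l\|[e^{it|\nabla|^{\alpha}}][e^{i(\cdot)\xi_n}\phi]\r\|_{L_{t,x}^{2\alpha+2}}=0,\]
namely \eqref{equation_nonendpoint linear profile decomposition_1}: after modulating to frequency $\xi_n$ the evolution is an approximate \schrodinger flow at the rescaled time $t'=\binom{\alpha}{2}|\xi_n|^{\alpha-2}t$, and without the factor $[D^{\frac{\alpha-2}{q}}]$ the Jacobian $|\xi_n|^{-\frac{\alpha-2}{2\alpha+2}}$ is uncompensated and kills the norm for $\alpha>2$ (for $\alpha=2$ the modulation is an exact Galilean symmetry and is harmless). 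Consequently the highly oscillatory profiles contribute nothing, are absorbed into the remainder, and the surviving operators $[T_n^j]$ carry no frequency parameter; they are then genuine symmetries of \eqref{equ_nonendpoint alpha-Strichartz}, which is why existence holds unconditionally. In particular the case $h_n^1\xi_n^1\to\infty$ for your surviving unit-mass profile is not ``reduced to the sharp constant'' --- it is impossible, since that profile must carry the full norm $\dot{\mathbf{M}}_{\alpha}>0$. A secondary point: rerunning the extraction ``based on the inverse Strichartz inequality in this norm'' would require a refined estimate in $L^{2\alpha+2}_{t,x}$ that you have not supplied; the paper sidesteps this by reusing Proposition \ref{Prop_linear profile decomposition} as is and controlling the $L^{2\alpha+2}_{t,x}$ remainder via \eqref{Prop_linear profile decomposition_2} and Sobolev embedding.
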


The outline of this paper is as follows. In Section \ref{Sec_Dislocation from van der Corput} we begin with proving the conditional dislocation property Proposition \ref{Prop_dislocation property} which is one of the key ingredients in our paper. Then we extract the frequency and scaling parameters for the desired $\alpha$-Strichartz version linear profile decomposition in Section \ref{Sec_First-step}. After that, by using Proposition \ref{Prop_dislocation property}, we are able to obtain the time and space translation parameters in Section \ref{Sec_Second-step} and further present the $\alpha$-Strichartz version linear profile decomposition in Section \ref{Sec_Profile decomposition}. Then Section \ref{Sec_Symmetric alpha-Strichartz extremals} and Section \ref{Sec_Asymmetric alpha-Strichartz} contain the extremal results for symmetric $\alpha$-Strichartz estimates Theorem \ref{Thm_extremals-symmetric} and asymmetric $\alpha$-Strichartz estimates Theorem \ref{Thm_extremals asymmetric} respectively. Finally the proof of Theorem \ref{Thm_extremals nonendpoint alpha-Strichartz} is provided in Section \ref{Sec_Extremals nonendpoint alpha-Strichartz}.

We end this section with some notations. Firstly we use the familiar notation $x\lesssim y$ to denote that there exists a finite constant $C$ such that $|x|\leq C|y|$, similarly for $x\gtrsim y$ and $x\sim y$. Sometimes we may show the dependence such as $x\lesssim_{\alpha} y$ for the constant $C=C(\alpha)$ if necessary. Occasionally we may write $\hat{u}:=\mathscr{F}[u]$ or $u^{\wedge}:=\mathscr{F}[u]$, similarly for the inverse Fourier transform $\check{u}=u^{\vee}:=\mathscr{F}^{-1}[u]$. In addition, since there may be different topologies throughout this paper, we use the notation $\to$ to denote strong convergence and the notation $\rightharpoonup$ to denote weak convergence. More precisely, for a sequence of functions $(f_n)\subset L^p$, we write $f_n\to f_0$ for the fact that $f_n$ converge to $f_0$ as $n$ goes to infinity in the norm (strong) topology of $L^p$, and write $f_n \rightharpoonup f_0$ for the fact that $f_n$ converge to $f_0$ as $n$ goes to infinity in the weak topology of $L^p$. As for a sequence of operators $([T_n])$ on the space $H$ which means $([T_n])\subset \mathcal{B}(H)$, similarly $[T_n]\to [T_0]$ and $[T_n]\rightharpoonup [T_0]$ denote the convergence in the strong operator topology and weak operator topology of $\mathcal{B}(H)$ respectively.

\section{Dislocation property from van der Corput Lemma} \label{Sec_Dislocation from van der Corput}
Before to give the linear profile decomposition, we show the conditional dislocation property Proposition \ref{Prop_dislocation property} first since it will be used in the forthcoming work of extracting time-space translation parameters in Section \ref{Sec_Second-step}. As what we have said before this property is, in some sense but not directly, generalization of the classical \schrodinger dislocation property which comes from the Galilean invariance. Note that the conditional dislocation property Proposition \ref{Prop_dislocation property} has been adapted to the desired profile decomposition Proposition \ref{Prop_linear profile decomposition} when we establish it.

\begin{proof}[\textbf{Proof of Proposition \ref{Prop_dislocation property}}]
We begin with proving the first conclusion. By a standard approximation argument together with the symmetry of $j$ and $k$, it suffices to prove that if
\begin{equation}\label{equation_dislocations_1}
\lim_{n\to\infty}\l\langle [g_n^j]^{-1}[e^{it_n^j|\nabla|^{\alpha}}][e^{-it_n^k|\nabla|^{\alpha}}][g_n^k]\phi, \psi\r\rangle\neq0
\end{equation}
for some Schwartz functions $\phi$ and $\psi$ whose Fourier supports are compact, then there exist a unitary operator $G^{jk}\in \mathcal{B}(L_x^2)$ and a subsequence for $n$ (also denoted by $n$) such that
\begin{equation}\label{equation_dislocations_1.1}
[g_n^j]^{-1}[e^{it_n^j|\nabla|^{\alpha}}][e^{-it_n^k|\nabla|^{\alpha}}][g_n^k]f\to G^{jk}f
\end{equation}
as $n\to\infty$ in the $L_x^2$ norm topology for all Schwartz functions $f$. Note that a simple computation shows
\begin{align}
&2\pi \l|[g_n^j]^{-1}[e^{it_n^j|\nabla|^{\alpha}}][e^{-it_n^k|\nabla|^{\alpha}}][g_n^k]\phi(x)\r| \notag\\
&=[g_n^j]^{-1} (h_n^k)^{-\frac{1}{2}}\l|e^{ix\xi_n^k} \int_{\bR} e^{i\xi\frac{x-x_n^k}{h_n^k}-i|\xi+h_n^k\xi_n^k|^{\alpha}\frac{t_n^j-t_n^k}{(h_n^k)^{\alpha}}}\hat{\phi}(\xi)\ddd\xi\r| \notag\\
&=\l(\frac{h_n^j}{h_n^k}\r)^{\frac{1}{2}}\l|e^{ix h_n^j(\xi_n^k-\xi_n^j)} \int_{\bR} e^{i\xi\frac{h_n^j x+x_n^j-x_n^k}{h_n^k}-i|\xi+h_n^k\xi_n^k|^\alpha\frac{t_n^j-t_n^k}{(h_n^k)^{\alpha}}} \hat{\phi}(\xi) \ddd\xi \r|. \label{equation_dislocations_1.2}
\end{align}

We first eliminate the case $\lim_{n\to\infty}\l(h_n^j/h_n^k+h_n^k/h_n^j\r)=\infty$. Due to the fact that the operators in condition \eqref{equation_dislocations_1} are unitary operators on $L_x^2$, it is easy to conclude
\begin{align}
&2\pi \l\langle [g_n^j]^{-1}[e^{it_n^j|\nabla|^{\alpha}}][e^{-it_n^k|\nabla|^{\alpha}}][g_n^k]\phi, \psi\r\rangle \label{equation_dislocations_1.5}\\
&=(h_n^jh_n^k)^{-\frac{1}{2}}\l\langle\int_{\bR} e^{i\frac{x-x_n^j}{h_n^j}\xi+i\frac{t_n^j}{(h_n^j)^{\alpha}}|\xi|^{\alpha}} [e^{i(\cdot)h_n^j\xi_n^j}\phi]^{\wedge}(\xi)\ddd\xi, \int_{\bR} e^{i\frac{x-x_n^k}{h_n^k}\xi+i\frac{t_n^k}{(h_n^k)^{\alpha}}|\xi|^{\alpha}} [e^{i(\cdot)h_n^k\xi_n^k}\psi]^{\wedge}(\xi)\ddd\xi\r\rangle \label{equation_dislocations_1.8}\\
&=:(h_n^j h_n^k)^{-\frac{1}{2}}\l\langle\Phi_n^j\l(\frac{x-x_n^j}{h_n^j}\r), \Phi_n^k\l(\frac{x-x_n^k}{h_n^k}\r)\r\rangle. \label{equation_dislocations_1.85}
\end{align}
Notice that $\Phi_n^j\in L^2$ which implies
\begin{equation}\label{equation_dislocations_1.9}
\lim_{R\to\infty} \int_{|y|>R}|\Phi_n^j(y)|^2\ddd y=0.
\end{equation}
Hence if we setting
\[B_n^j(R):=\l\{x: \l|\frac{x-x_n^j}{h_n^j}\r|\leq R\r\}, \quad B_n^k(R):=\l\{x: \l|\frac{x-x_n^k}{h_n^k}\r|\leq R\r\}\]
and considering \eqref{equation_dislocations_1.5} with the integral on $\bR\setminus B_n^j(R)$, \holder inequality will give a bound as follows
\[2\pi \l\langle [g_n^j]^{-1}[e^{it_n^j|\nabla|^{\alpha}}][e^{-it_n^k|\nabla|^{\alpha}}][g_n^k]\phi, \psi\r\rangle_{\bR\setminus B_n^j(R)}\leq \l(\int_{|y|>R}|\Phi_n^j(y)|^2\ddd y\r)^{\frac{1}{2}}\l(\int_{\bR}|\Phi_n^k(y)|^2 \ddd y\r)^{\frac{1}{2}}.\]
Similar approach also works for the integral on $\bR\setminus B_n^k(R)$ in \eqref{equation_dislocations_1.5}. Thus, by the fact that $\Phi_n^j$ and $\Phi_n^k$ are $L_x^{\infty}$ functions, we aim to show the following estimate
\begin{equation}\label{equation_dislocations_2}
\lim_{n\to\infty} (h_n^j h_n^k)^{-\frac{1}{2}} \Big|B_n^j(R) \cap B_n^k(R)\Big|=0,
\end{equation}
which will lead to a contradiction to the assumption \eqref{equation_dislocations_1}. One observation we need is
\[\Big|B_n^j(R) \cap B_n^k(R)\Big| \leq C_R \min\l\{h_n^j, h_n^k\r\}.\]
Then we obtain the desired estimate \eqref{equation_dislocations_2} immediately since $h_n^j/h_n^k$ goes to either zero or infinity. Consequently, we can assume $h_n^j\sim h_n^k$ from now on.

Next we eliminate the case $\lim_{n\to\infty} (h_n^j+h_n^k)|\xi_n^j-\xi_n^k|=\infty$. By the Plancherel theorem and the fact that these operators are unitary operators in on $L^2(\bR)$, we conclude
\begin{align*}
\l\langle [g_n^j]^{-1}[e^{it_n^j|\nabla|^{\alpha}}][e^{-it_n^k|\nabla|^{\alpha}}][g_n^k]\phi, \psi\r\rangle_{x}
&=\l\langle [e^{-it_n^k|\nabla|^{\alpha}}][g_n^k]\phi, [e^{-it_n^j|\nabla|^{\alpha}}][g_n^j]\psi\r\rangle_{x} \\
&\sim \l\langle e^{-it_n^k|\cdot|^{\alpha}} \widehat{[g_n^k]\phi}, e^{-it_n^j|\cdot|^{\alpha}} \widehat{[g_n^j]\psi}\r\rangle_{\xi} \\
&=(h_n^jh_n^k)^{1/2} \l\langle \hat{\phi}\l(h_n^k\xi-h_n^k\xi_n^k\r), \hat{\psi}\l(h_n^j\xi-h_n^j\xi_n^j\r)\r\rangle_{\xi}.
\end{align*}
Thus the assumption $h_n^j\sim h_n^k$ gives
\begin{align*}
\l\langle [g_n^j]^{-1}[e^{it_n^j|\nabla|^{\alpha}}][e^{-it_n^k|\nabla|^{\alpha}}][g_n^k]\phi, \psi\r\rangle_{x}
&\sim\l\langle \hat{\phi}\l(\frac{h_n^k}{h_n^j}(\xi-h_n^j\xi_n^k)\r), \hat{\psi}\l(\xi-h_n^j\xi_n^j\r)\r\rangle_{\xi} \\
&\sim\l\langle \hat{\phi}\l(\xi-h_n^k\xi_n^k\r), \hat{\psi}\l(\frac{h_n^j}{h_n^k}(\xi-h_n^k\xi_n^j)\r)\r\rangle_{\xi}.
\end{align*}
Then the condition \eqref{equation_dislocations_1} together with the assumption that $\hat{\phi}$ and $\hat{\psi}$ have compact supports imply, up to subsequences, the following
\[\lim_{n\to\infty} (h_n^j+h_n^k)\l|\xi_n^k-\xi_n^j\r|=c_2, \quad c_2<\infty.\]
Hence we can assume that $(h_n^j,\xi_n^j) \equiv (h_n^k,\xi_n^k) \equiv (h_n,\xi_n)$ from now on.

With the assumptions for $\xi_n^{\cdot}$ and $h_n^{\cdot}$ at hand, we can turn the expression \eqref{equation_dislocations_1.85} into
\[\l\langle [g_n^j]^{-1}[e^{it_n^j|\nabla|^{\alpha}}][e^{-it_n^k|\nabla|^{\alpha}}][g_n^k]\phi, \psi\r\rangle= (2\pi h_n)^{-1}\l\langle\Phi_n^j\l(\frac{x-x_n^j}{h_n}\r), \Phi_n^k\l(\frac{x-x_n^k}{h_n}\r)\r\rangle.\]
Then just as what we have done above, recalling the condition \eqref{equation_dislocations_1.9}, a similar changing of variables argument and the assumption \eqref{equation_dislocations_1} imply, up to subsequences, that
\[\lim_{n\to\infty} \frac{x_n^j-x_n^k}{h_n}=c_3, \quad |c_3|<\infty.\]
On the other hand, we can turn the expression \eqref{equation_dislocations_1.8} into
\begin{equation}\label{equation_dislocations_2.3}
\l\langle [g_n^j]^{-1}[e^{it_n^j|\nabla|^{\alpha}}][e^{-it_n^k|\nabla|^{\alpha}}][g_n^k]\phi, \psi\r\rangle =(2\pi h_n)^{-1}\l\langle\tilde{\Phi}_n^j\l(\frac{x}{h_n}+\frac{t_n^j}{(h_n)^{\alpha}}\r), \tilde{\Phi}_n^k\l(\frac{x}{h_n}+\frac{t_n^k}{(h_n)^{\alpha}}\r)\r\rangle,
\end{equation}
where the function $\tilde{\Phi}_n^j$ is defined by the following
\[\tilde{\Phi}_n^j\l(\frac{x}{h_n}+\frac{t_n^j}{(h_n)^{\alpha}}\r):= e^{i\l(\frac{x}{h_n}+\frac{t_n^j}{(h_n)^{\alpha}}\r)\xi+i\frac{t_n^j}{(h_n^j)^{\alpha}}|\xi|^{\alpha}} e^{-i\l(\frac{x_n^j}{h_n}+\frac{t_n^j}{(h_n)^{\alpha}}\r)\xi} [e^{i(\cdot)h_n^j\xi_n^j}\phi]^{\wedge}(\xi)\ddd\xi,\]
and similarly for the definition of $\tilde{\Phi}_n^k$. Then we still have the fact $\tilde{\Phi}_n^j\in L^2$ and further
\[\lim_{R\to\infty} \int_{|y|>R}|\tilde{\Phi}_n^j|^2\ddd y=0.\]
Analogously, the expression \eqref{equation_dislocations_2.3} and a changing of variables argument imply, up to subsequences, that
\[\lim_{n\to\infty}\frac{t_n^j-t_n^k}{(h_n)^{\alpha}}=c_4, \quad |c_4|<\infty\]
based on the assumption \eqref{equation_dislocations_1}. Moreover, we can turn the expression \eqref{equation_dislocations_1.2} into
\[\l|[g_n^j]^{-1}[e^{it_n^j|\nabla|^{\alpha}}][e^{-it_n^k|\nabla|^{\alpha}}][g_n^k]\phi(x)\r|=\frac{1}{2\pi}\l|\int_{\bR} e^{i\Phi_n^{jk}(x,\xi)} \hat{\phi}(\xi) \ddd\xi\r|,\]
where
\begin{equation}\label{equation_dislocations_2.5}
\Phi_n^{jk}(x,\xi):=\xi\l(x+\frac{x_n^j-x_n^k}{h_n}\r)-\frac{t_n^j-t_n^k}{(h_n)^{\alpha}}|\xi+h_n\xi_n|^\alpha.
\end{equation}
It is obvious that $\int_{\bR} e^{i\Phi_n^{jk}(x,\xi)} \hat{\phi}(\xi) \ddd\xi \in L_x^{\infty}$. Next we are going to use the method of stationary phase to obtain the decay estimates of this oscillatory integral. To begin with, analysing piece by piece if necessary, we can assume $\xi+h_n\xi_n>0$ without loss of generality and rewrite $\Phi_n^{jk}(x,\xi)$ as
\begin{align}
\Phi_n^{jk}(x,\xi)&=\xi\l(x+\frac{x_n^j-x_n^k}{h_n}\r)+\sum_{m=1}^{\alpha}\frac{\binom{\alpha}{m}(t_n^k-t_n^j)(\xi_n)^{\alpha-m} (\xi)^m}{(h_n)^{m}} \label{equation_dislocations_2.7}\\
&=:\xi x+\sum_{m=1}^\alpha a_n^{m,j,k} (\xi)^m, \notag
\end{align}
where $a_n^{m,j,k}$ are the coefficients of the $m$-order term $(\xi)^m$ in the expression of $\Phi_n^{jk}$, except for the case $m=1$ where $x+a_n^{1,j,k}$ is the coefficient of $\xi$. Note that we have ignored the constant term $(\xi)^0$ here and in the computation \eqref{equation_dislocations_1.2}. This term is easy to manage due to the compactness of $\bS^{1}$, which will be shown after the definition \eqref{equation_dislocations_3}. We are going to prove that, after passing to a subsequence, each of the coefficients $a_n^{m,j,k}$ goes to some constant $c^{m,j,k}\neq\infty$ as $n$ goes to infinity. Then this result gives the desired function
\[\Phi^{jk}(x,\xi):=\xi x+\sum_{m=1}^\alpha c^{m,j,k} (\xi)^m\]
satisfying $\lim_{n\to\infty}\Phi_n^{jk}(x,\xi)=\Phi^{jk}(x,\xi)$. Thereby we get the desired operator $G^{jk}$ defined as
\begin{equation}\label{equation_dislocations_3}
G^{jk}f(x):=\frac{e^{i\theta^{jk}}}{2\pi}\int_{\bR} e^{i\Phi^{jk}(x,\xi)} \hat{f}(\xi) \ddd\xi,
\end{equation}
which satisfies the equation \eqref{equation_dislocations_1.1}. It should be pointed out that the term $e^{i\theta^{jk}}$, which we do not pay much attention to it before, comes from the parameters involved and the compactness of $\bS^1$ due to the fact $\l|e^{i\theta_n^{jk}}\r|=1$. We also remark that the lack of the term $e^{ix\cdot}$ in \eqref{equation_dislocations_3}, compared with the expression in \eqref{equation_dislocations_1.2}, comes from the assumption $\xi_n^j\equiv\xi_n^k\equiv\xi_n$.

It remains to be proved that for each $(m,j,k)$ there exists $|c^{m,j,k}|<\infty$ satisfying, after passing to a subsequence,
\[\lim_{n\to\infty}a_n^{m,j,k}=c^{m,j,k}.\]
If on the contrary for some fixed $m$ there holds $\lim_{n\to\infty}a_n^{m,j,k}=\infty$. Take
\begin{equation}\label{equation_dislocations_4}
m_0:=\max\{m: \lim_{n\to\infty}a_n^{m,j,k}=\infty\}.
\end{equation}
We break the proof into two cases $m_0=1$ and $m_0>1$. For the case $m_0=1$, we have the following limit relation
\[\Phi_n^{jk}(x,\xi)=(x+a_n^{1,j,k})\xi+\Phi_n^{1,j,k}(\xi)\to (x+\infty)\xi+\Phi^{1,j,k}(\xi)\]
as $n$ goes to infinity. Here the function $\Phi^{1,j,k}(\xi)$ whose coefficients are bounded is the limit function of $\Phi_n^{1,j,k}(\xi)$. Since the parameters $a_n^{1,j,k}$ just deduce translations for $\xi$ on the Fourier side, the assumption of compact Fourier supports property and the Plancherel theorem imply that
\[\lim_{n\to\infty}\l\langle [g_n^j]^{-1}[e^{it_n^j|\nabla|^{\alpha}}][e^{-it_n^k|\nabla|^{\alpha}}][g_n^k]\phi, \psi\r\rangle=0.\]
This is a contradiction to the condition \eqref{equation_dislocations_1}. For the case $m_0>1$, by the compactness of $\supp(\hat{\phi})$, we deduce the following estimates
\[\l|\l(\frac{\ddd}{\ddd\xi}\r)^{m_0}\Phi_n^{jk}(x,\xi)\r|\sim |a_n^{m_0,j,k}|, \quad \l|\l(\frac{\ddd}{\ddd\xi}\r)^{m_0+1}\Phi_n^{jk}(x,\xi)\r|<\infty\]
for $\xi\in\supp(\hat{\phi})$ and $n$ large enough. Therefore the classical van der Corput Lemma \cite[p. 334, Corollary]{Stein_1993} implies
\[\l\|[g_n^j]^{-1}[e^{it_n^j|\nabla|^{\alpha}}][e^{-it_n^k|\nabla|^{\alpha}}][g_n^k]\phi\r\|_{L_x^{\infty}}\lesssim_{\phi} |a_n^{m_0,j,k}|^{-\frac{1}{m_0}}\to0\]
as $n\to \infty$ and thus $[g_n^j]^{-1}[e^{it_n^j|\nabla|^{\alpha}}][e^{-it_n^k|\nabla|^{\alpha}}][g_n^k]\phi\rightharpoonup0$ in  $L_x^2$ as $n$ goes to infinity. This is a contradiction to \eqref{equation_dislocations_1} and finishes the proof of the first conclusion.

Now, we turn to the second conclusion. Actually the strategy is similar to the proof above for the first conclusion. We still can get the expression \eqref{equation_dislocations_2.5} even if $\alpha>1$ is a real number. Here we divide the proof into two parts: up to subsequences, either \begin{equation}\label{equation_dislocations_4.5}
\lim_{n\to\infty}|h_n\xi_n|\to\infty
\end{equation}
or $\lim_{n\to\infty}h_n\xi_n=c_5$ with $|c_5|<\infty$. For the latter case, indeed we have got the desired operator $G^{jk}$ satisfying \eqref{equation_dislocations_1.1} defined as follows
\[G^{jk}f(x):=\frac{1}{2\pi}\int_{\bR} e^{i(x+c_3)\xi-ic_4|\xi+c_5|^{\alpha}}\hat{f}(\xi)\ddd\xi.\]
Hence our last target is to deal with the case $h_n\xi_n\to\infty$ as $n$ goes to infinity. This time, noticing the compact Fourier supports assumption, we should change \eqref{equation_dislocations_2.7} to the following series\footnote{Recall that the binomial coefficient $\binom{\alpha}{m}:=\alpha(\alpha-1) \cdots (\alpha-m+1)/ m!$ is well-defined for $\alpha\notin \bZ$.}
\begin{align}
\Phi_n^{jk}(x,\xi)&=\xi\l(x+\frac{x_n^j-x_n^k}{h_n}\r)+\sum_{m=1}^{\infty}\frac{\binom{\alpha}{m}(t_n^k-t_n^j)(h_n\xi_n)^{\alpha-m} (\xi)^m}{(h_n)^{\alpha}} \label{equation_dislocations_5}\\
&=:\xi x+\sum_{m=1}^{\infty} a_n^{m,j,k} (\xi)^m \notag
\end{align}
for $n$ large enough, since we have the assumption \eqref{equation_dislocations_4.5} which can guarantee the uniform convergency of this series. Meanwhile, we can investigate further about the coefficients $a_n^{m,j,k}$ by using this assumption. Define $m_0$ as in \eqref{equation_dislocations_4}. Then if $m_0\geq2$, the assumption \eqref{equation_dislocations_4.5} will imply
\[\lim_{n\to\infty}a_n^{2,j,k}=\infty.\]
Again, the classical van der Corput Lemma will give the decay estimate
\[\l\|[g_n^j]^{-1}[e^{it_n^j|\nabla|^{\alpha}}][e^{-it_n^k|\nabla|^{\alpha}}][g_n^k]\phi\r\|_{L_x^{\infty}}\lesssim_{\phi} |a_n^{2,j,k}|^{-\frac{1}{2}}\to 0\]
as $n\to\infty$. If $m_0=1$ we do the same arguments as the proof for the first conclusion aforementioned. Analogously when $a_n^{1,j,k}$ and $a_n^{2,j,k}$ are both bounded, we can assume that up to subsequences
\[\lim_{n\to\infty}a_n^{1,j,k}=c^{1,j,k}, \quad \lim_{n\to\infty}a_n^{2,j,k}=c^{2,j,k}.\]
Then the desired operator, similar to the expression \eqref{equation_dislocations_3}, is given by
\[\tilde{G}^{jk}f(x):=\frac{e^{i\tilde{\theta}^{jk}}}{2\pi}\int_{\bR} e^{i\tilde{\Phi}^{jk}(x,\xi)} \hat{f}(\xi) \ddd\xi, \quad \tilde{\Phi}^{jk}(x,\xi):=(x+c^{1,j,k})\xi+c^{2,j,k}(\xi)^2.\]
Therefore we totally complete the proof of these two conclusions.
\end{proof}

\section{First-step decomposition: frequency and scaling}\label{Sec_First-step}
Usually the profile decomposition results are obtained by following two steps: first for the frequency-scaling parameters based on some refinement of Strichartz estimates which can be deduced by the bilinear restriction estimates from \cite{Tao_2003, TVV_1998, Wolff_2001}, and second for the time-space translations by using some weak convergence arguments which will be further discussed in Section \ref{Sec_Second-step} later. There may be some papers providing slightly different procedures by using similar ingredients such as \cite[Appendix A]{Tao_2009} and \cite[Theorem 4.26]{KV_2013}. We refer to \cite{Stovall_2020} for a brief discussion on the $L^2$-based linear profile decomposition and a generalization in the $L^p$ setting, see also \cite{BS_2021} for some recent results on the $L^p$-generalization.

In this section, we present the first-step decomposition by following the proofs in \cite{BOQ_2020, JPS_2010}, similar method can also be seen in some earlier papers \cite{CK_2007, KPV_2000}. It is convenient to give the following dyadic intervals in $\bR$ to do some dyadic analysis.

\begin{definition}[Dyadic intervals]
Given $j\in\bZ$, the \textit{dyadic intervals of length $2^j$} in $\bR$ is defined by
\[\mathcal{D}_j:=\l\{2^j[k,k+1): k\in\bZ\r\};\]
and we use $\mathcal{D}:=\cup_{j\in\bZ}\mathcal{D}_j$ to denote the set of all the \textit{dyadic intervals} in $\bR$.
\end{definition}

\begin{prop}[$\alpha$-refined Strichartz] \label{Prop_alpha-Refined Strichartz}
For any $p>1$, we have
\begin{equation}\label{Prop_alpha-Refined Strichartz_1}
\l\|[D^{\frac{\alpha-2}{6}}][e^{it|\nabla|^{\alpha}}]f\r\|_{L_{t,x}^6(\bR^2)} \lesssim_{\alpha, p} \l(\sup_{\tau} |\tau|^{\frac{1}{2}-\frac{1}{p}} \|\hat{f}\|_{L^p(\tau)}\r)^{\frac{1}{3}} \l\|f\r\|_{L^2(\bR)}^{\frac{2}{3}},
\end{equation}
where $\tau$ denotes an interval in $\bR$ with the length $|\tau|$. Moreover, we can restrict $\tau$ to be dyadic intervals.
\end{prop}

\begin{proof}[\textbf{Proof of Proposition \ref{Prop_alpha-Refined Strichartz}}]
We adapt the proofs in \cite[Lemma 1.2]{JPS_2010} and \cite[Section 2]{BOQ_2020} by using the Whitney decomposition and Hausdorff-Young inequality instead of bilinear restriction estimates aforementioned since we are dealing with the one dimensional case now. See also \cite{CK_2007, KPV_2000} for different methods using Fefferman-Phong's weighted inequality from \cite{Fefferman_1983}.

Notice that we can normalize $\sup_{\tau\in\mathcal{D}}|\tau|^{1/2-1/p}\|\hat{f}\|_{L^p(\tau)}=1$ for given $p>1$. This implies that the following inequality
\begin{equation}\label{equ_alpha-Refined Strichartz_0.5}
\int_{I} |\hat{f}|^p \ddd\xi\leq |I|^{1-p/2}
\end{equation}
holds for all dyadic intervals $I\in\{2^j[k,k+1): j\in\bZ, k\in\bZ\}$.  In our proof here we aim to show that
\begin{equation}\label{equ_alpha-Refined Strichartz_1}
\l\|[D^{\frac{(\alpha-2)}{6}}][e^{it|\nabla|^{\alpha}}]f [D^{\frac{(\alpha-2)}{6}}][e^{it|\nabla|^{\alpha}}]g\r\|_{L^3(\bR^2)}^{\frac{3}{2}} \lesssim \int_{\bR^2} \frac{|\hat{f}(\xi)\hat{g}(\eta)|^{\frac{3}{2}}}{|\xi-\eta|^{\frac{1}{2}}}\ddd\xi\ddd\eta.
\end{equation}
Getting the desired result \eqref{Prop_alpha-Refined Strichartz_1} from the estimates \eqref{equ_alpha-Refined Strichartz_0.5} and \eqref{equ_alpha-Refined Strichartz_1} is a standard application of Whitney decomposition. Since the details of this process can be found in \cite[Lemma 1.2]{JPS_2010} and \cite[Proposition 2.7]{BOQ_2020}, we omit the detailed proof of this part for avoiding too much repetition.

Define the, in some sense, extension operator $[E_{\alpha}]$ by
\[[E_{\alpha}]f(t,x):=2\pi [D^{\frac{\alpha-2}{6}}][e^{it|\nabla|^{\alpha}}]f(x) =\int_{\bR}e^{ix\xi-it|\xi|^{\alpha}}|\xi|^{\frac{\alpha-2}{6}} \hat{f}(\xi)\ddd\xi.\]
Then we investigate the following bilinear forms
\[[E_{\alpha}]f[E_{\alpha}]g(t,x)=\int_{\bR^2}e^{ix(\xi+\eta)-it(|\xi|^{\alpha}+|\eta|^{\alpha})}|\xi|^{\frac{\alpha-2}{6}}|\eta|^{\frac{\alpha-2}{6}} \hat{f}(\xi)\hat{g}(\eta) \ddd\xi\ddd\eta.\]
Consider the changing of variables
\begin{equation}\label{equ_alpha-Refined Strichartz_2}
(\xi, \eta)\mapsto (u,v):=(\xi+\eta, -|\xi|^{\alpha}-|\eta|^{\alpha}).
\end{equation}
Recall that for fixed $(u_0,v_0)$, the graph of the function $u_0=\xi+\eta$ is a line and the graph of $v_0=-|\xi|^{\alpha}-|\eta|^{\alpha}$ is a ``circle'' in some sense. This implies that the map defined in \eqref{equ_alpha-Refined Strichartz_2} is an at most 2-to-1 map from $\bR^2$ to the region $Q:=\{(u,v): -v\geq 2^{1-\alpha}|u|^{\alpha}\}$ which comes from the convexity. Further the Jacobian is given by
\[J(u,v)=J^{-1}(\xi, \eta)=\frac{\partial(u,v)}{\partial(\xi, \eta)}=\alpha(\xi|\xi|^{\alpha-2}-\eta|\eta|^{\alpha-2}).\]
Thus we conclude
\[\Big|[E_{\alpha}]f[E_{\alpha}]g(t,x)\Big|\leq 2\l|\int_{Q} e^{ixu+itv} |\xi\eta|^{\frac{\alpha-2}{6}} \hat{f}(\xi)\hat{g}(\eta) J^{-1}(u,v)\ddd u\ddd v\r|,\]
where $(\xi,\eta)$ is a function of $(u,v)$ via the change of variables \eqref{equ_alpha-Refined Strichartz_2} above. By the symmetry, we can assume $|\eta|\leq |\xi|$ without loss of generality. Using the Hausdorff-Young inequality and then changing variables back to $(\xi,\eta)$ we deduce the following
\begin{align}
\l\|[E_{\alpha}]f\cdot [E_{\alpha}]g\r\|^{3/2}_{L_{t,x}^3(\bR^2)} &\lesssim \l\||\xi\eta|^{\frac{\alpha-2}{6}}\hat{f}(\xi)\hat{g}(\eta) J^{-1}(u,v)\r\|^{3/2}_{L_{u,v}^{3/2}(\bR^2)} \notag\\
&=\l\||\xi\eta|^{\frac{\alpha-2}{6}} \l|J(\xi,\eta)\r|^{\frac{1}{3}} \hat{f}(\xi)\hat{g}(\eta)\r\|^{3/2}_{L_{u,v}^{3/2}(\bR^2)}. \label{equ_alpha-Refined Strichartz_3}
\end{align}
To estimate the norm above, our next target is the Jacobian factor
\[\tilde{J}(\xi,\eta):=|\xi\eta|^{\frac{\alpha-2}{4}} |J(\xi,\eta)|^{\frac{1}{2}}=\frac{|\xi\eta|^{\frac{\alpha-2}{4}}}{\l[\alpha(\xi|\xi|^{\alpha-2}-\eta|\eta|^{\alpha-2})\r]^{1/2}}.\]
If $\xi\eta\leq 0$, it is easy to see that
\[\tilde{J}(\xi,\eta)=\frac{|\xi\eta|^{\frac{\alpha-2}{4}}}{\l[\alpha(|\xi|^{\alpha-1}+|\eta|^{\alpha-1})\r]^{1/2}}\lesssim_{\alpha} (|\xi|+|\eta|)^{-\frac{1}{2}}=|\xi-\eta|^{-\frac{1}{2}}.\]
If $\xi\eta>0$ and $|\xi|\geq |\eta|$, then we have
\[|\xi|^{\alpha-1}-|\eta|^{\alpha-1}\sim_{\alpha} (|\xi|-|\eta|)|\xi|^{\alpha-2}.\]
This estimate leads to
\[\tilde{J}(\xi,\eta)=\frac{|\xi\eta|^{\frac{\alpha-2}{4}}}{\l[\alpha(|\xi|^{\alpha-1}-|\eta|^{\alpha-1})\r]^{1/2}}\lesssim_{\alpha} \frac{|\xi\eta|^{\frac{\alpha-2}{4}}}{|\xi|^{\frac{\alpha-2}{2}}|\xi-\eta|^{\frac{1}{2}}}\leq |\xi-\eta|^{-\frac{1}{2}}.\]
If $\xi\eta>0$ and $|\xi|<|\eta|$, by the symmetry, analogously as above we can obtain $\tilde{J}(\xi,\eta)\lesssim_{\alpha} |\xi-\eta|^{-\frac{1}{2}}$. In summary, we know that
\[\tilde{J}(\xi,\eta)\lesssim_{\alpha} |\xi-\eta|^{-\frac{1}{2}}\]
holds uniformly in $\xi$ and $\eta$. Taking this into the expression \eqref{equ_alpha-Refined Strichartz_3}, we get the desired estimate \eqref{equ_alpha-Refined Strichartz_1}.
\end{proof}

Based on the refined Strichartz estimate Proposition \ref{Prop_alpha-Refined Strichartz}, we can extract the frequency and scaling parameters by following a standard approach in \cite{JPS_2010}, similar argument can also be seen in \cite{CK_2007}. We omit the detailed proof of the following Lemma \ref{Lemma_frequency and scaling} here, since it is too long but essentially the same as \cite[Lemma 5.1]{JPS_2010} and \cite[Lemma 3.3]{CK_2007}.
\begin{lemma}\label{Lemma_frequency and scaling}
Let $\{u_n\}_{n\geq 1}$ be a sequence of functions with $\|u_n\|_{L_x^2(\bR)}\leq 1$. Then up to subsequences, for any $\delta>0$, there exist
\[N=N(\delta),\quad \l\{(\rho_n^{\beta},\xi_n^{\beta})_{1\leq \beta\leq N}\r\}\subset (0,\infty)\times \bR, \quad \l\{(f_n^{\beta})_{1\leq \beta \leq N} \r\}\subset L_x^2(\bR)\]
such that
\begin{equation}\label{Lemma_frequency and scaling_1}
u_n=\sum_{\beta=1}^{N} f_n^{\beta}+q_n^N
\end{equation}
and there exists a compact set $K=K(N)$ in $\bR$ such that for every $1\leq \beta\leq N$ there holds
\begin{equation}\label{Lemma_frequency and scaling_2}
(\rho_n^{\beta})^{\frac{1}{2}}\l|\hat{f}_n^\beta(\rho_n^{\beta}\xi+\xi_n^{\beta})\r|\leq C_{\delta} \mathbb{I}_K(\xi).
\end{equation}
Here the sequence $(\rho_n^{\beta},\xi_n^{\beta})$ satisfies that if $\beta\neq\gamma$ then
\begin{equation}\label{Lemma_frequency and scaling_3}
\lim_{n\to\infty}\l(\frac{\rho_n^{\beta}}{\rho_n^\gamma} +\frac{\rho_n^\gamma}{\rho_n^{\beta}} +\frac{|\xi_n^{\beta}-\xi_n^\gamma|}{\rho_n^{\beta}}+\frac{|\xi_n^{\beta}-\xi_n^\gamma|}{\rho_n^{\gamma}}\r)=\infty.
\end{equation}
The remainder term $q_n^N$ has a negligible Strichartz norm
\begin{equation}\label{Lemma_frequency and scaling_4}
\l\|[D^{\frac{\alpha-2}{6}}][e^{it|\nabla|^{\alpha}}]q_n^N\r\|_{L_{t,x}^6}\leq \delta;
\end{equation}
and furthermore, if for each $1\leq N'\leq N$ we generally define
\[q_n^{N'}:=q_n^N+f_n^N+f_n^{N-1}+\cdots+f_n^{N'+1},\]
then we have the $L^2$-almost orthogonal identity
\begin{equation}\label{Lemma_frequency and scaling_5}
\lim_{n\to\infty}\l(\|u_n\|_{L^2}^2-\l(\sum_{\beta=1}^{N'}\|f_n^{\beta}\|_{L^2}^2+\|q_n^{N'}\|_{L^2}^2\r)\r)=0.
\end{equation}
\end{lemma}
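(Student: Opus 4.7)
The plan is to set up an iterative extraction scheme based on the refined Strichartz estimate Proposition \ref{Prop_alpha-Refined Strichartz} with a fixed exponent $1<p<2$. Start by setting $q_n^0:=u_n$. If $\|[D^{(\alpha-2)/6}][e^{it|\nabla|^\alpha}]q_n^0\|_{L^6_{t,x}}\leq\delta$ along a subsequence, stop with $N=0$. Otherwise, Proposition \ref{Prop_alpha-Refined Strichartz} together with the dyadic reduction produces a dyadic interval $\tau_n^1\in\mathcal{D}$ with
\[
|\tau_n^1|^{1/2-1/p}\|\widehat{q_n^0}\|_{L^p(\tau_n^1)}\gtrsim_\alpha \delta^3.
\]
Set $\rho_n^1:=|\tau_n^1|$, let $\xi_n^1$ be the left endpoint of $\tau_n^1$, and define $\widehat{f_n^1}:=\widehat{q_n^0}\,\mathbb{I}_{\tau_n^1}$ intersected with the sub-level set $\{|\widehat{q_n^0}|\leq C_\delta/(\rho_n^1)^{1/2}\}$ in order to secure the pointwise bound \eqref{Lemma_frequency and scaling_2}; the truncated piece is absorbed into $q_n^N$ via Chebyshev applied to the $L^p$ mass. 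Then put $q_n^1:=q_n^0-f_n^1$ and iterate this procedure.

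To see that the iteration terminates, note that since $p<2$, Hölder gives $\|\widehat{f_n^\beta}\|_{L^2(\tau_n^\beta)}\geq|\tau_n^\beta|^{1/2-1/p}\|\widehat{f_n^\beta}\|_{L^p(\tau_n^\beta)}\gtrsim\delta^3$. Because the Fourier supports $\tau_n^\beta$ are pairwise disjoint at each fixed $n$ by construction, Plancherel gives $\sum_\beta\|f_n^\beta\|_{L^2}^2\leq\|u_n\|_{L^2}^2\leq 1$, forcing $N\lesssim\delta^{-6}$. This disjointness also yields the almost-orthogonality identity \eqref{Lemma_frequency and scaling_5}, while the termination criterion supplies the Strichartz smallness \eqref{Lemma_frequency and scaling_4} of the final remainder $q_n^N$.

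The hard part is the divergence property \eqref{Lemma_frequency and scaling_3}. Here I would pass to a common subsequence by a diagonal pigeonhole so that for every fixed pair $\beta\neq\gamma$ each of $\rho_n^\beta/\rho_n^\gamma$, $\rho_n^\gamma/\rho_n^\beta$, $|\xi_n^\beta-\xi_n^\gamma|/\rho_n^\beta$ either diverges or converges to a finite limit. The obstacle is ruling out the case where all three stay bounded: if so, then $\tau_n^\gamma$ lies in a fixed dilate of $\tau_n^\beta$ uniformly in $n$, and after scaling by $\rho_n^\beta$ and translating by $\xi_n^\beta$, both intervals sit inside a single compact set. To derive a contradiction one must exploit the near-maximality in the choice of $\tau_n^\gamma$: the residual $q_n^{\gamma-1}$ has had $f_n^\beta$ subtracted, so the only remaining $L^p$ mass of size $\gtrsim\delta^3$ at the scale $\rho_n^\gamma$ on a region comparable to $\tau_n^\beta$ must come from outside the truncated sub-level set, but this contradicts the uniform pointwise bound already built into $f_n^\beta$. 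This step is carried out essentially as in \cite[Lemma 5.1]{JPS_2010} and \cite[Lemma 3.3]{CK_2007}, modulo the cosmetic replacement of bilinear refinements by the Whitney-based Proposition \ref{Prop_alpha-Refined Strichartz}.

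Once the parameters $(\rho_n^\beta,\xi_n^\beta)$ are in place, the rescaled support condition \eqref{Lemma_frequency and scaling_2} is immediate with $K:=[0,1]$ (or a fixed bounded neighbourhood thereof), since the change of variables $\eta=\rho_n^\beta\xi+\xi_n^\beta$ sends $\tau_n^\beta$ into $K$ and the $L^\infty$ bound $(\rho_n^\beta)^{1/2}|\widehat{f_n^\beta}(\rho_n^\beta\xi+\xi_n^\beta)|\leq C_\delta$ is exactly the one imposed by the sub-level truncation in the first step.
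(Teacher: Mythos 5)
Your overall scheme --- iterate the refined Strichartz estimate of Proposition \ref{Prop_alpha-Refined Strichartz}, select a near-extremizing dyadic interval, perform a sub-level truncation to secure the $L^\infty$ bound \eqref{Lemma_frequency and scaling_2}, and count extractions via the $\gtrsim\delta^3$ lower bound on the $L^2$ mass of each piece --- is exactly the Carles--Keraani / Jiang--Pausader--Shao argument that the paper itself defers to (it gives no proof, citing \cite[Lemma 5.1]{JPS_2010} and \cite[Lemma 3.3]{CK_2007}), and those parts are sound. But your treatment of the divergence property \eqref{Lemma_frequency and scaling_3} has a genuine gap. You attempt to rule out by contradiction the case where, for some $\beta<\gamma$, all of $\rho_n^\beta/\rho_n^\gamma$, $\rho_n^\gamma/\rho_n^\beta$, $|\xi_n^\beta-\xi_n^\gamma|/\rho_n^\beta$ stay bounded, claiming the mass captured at step $\gamma$ ``must come from outside the truncated sub-level set'' of step $\beta$. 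This does not follow: your own reduction only places $\tau_n^\gamma$ inside a fixed dilate of $\tau_n^\beta$, and $\tau_n^\gamma$ can perfectly well be, say, the adjacent dyadic interval of the same length, where $\hat{q}_n^{\gamma-1}$ coincides with $\hat{q}_n^{\beta-1}$ and no pointwise bound was ever imposed. Comparable parameters at distinct steps are a real possibility, not a contradiction. The standard remedy is merging, not exclusion: whenever the quantity in \eqref{Lemma_frequency and scaling_3} remains bounded along a subsequence for a pair $(\beta,\gamma)$, one absorbs $f_n^\gamma$ into $f_n^\beta$, keeps the parameters $(\rho_n^\beta,\xi_n^\beta)$, and enlarges the compact set $K$ and the constant $C_\delta$ by the bounded comparability factors. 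This is precisely why the statement allows $K=K(N)$ rather than your proposed fixed $K=[0,1]$.

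A secondary imprecision: the dyadic intervals $\tau_n^\beta$ chosen at different steps are \emph{not} pairwise disjoint by construction (step $\gamma$ may select an interval meeting $\tau_n^\beta$, e.g.\ its super-level portion or a neighbouring interval of comparable length). What the construction does give is that $\hat{f}_n^\gamma$ is carved out of $\hat{q}_n^{\gamma-1}$, which already vanishes identically on $\supp\hat{f}_n^\beta$ (namely $\tau_n^\beta$ intersected with the sub-level set) for every $\beta<\gamma$; hence the supports of the $\hat{f}_n^\beta$ --- not the $\tau_n^\beta$ themselves --- are pairwise disjoint and disjoint from $\supp\hat{q}_n^N$. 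That is the fact recorded in Remark \ref{Remark_frequency and scaling}, and it is what yields \eqref{Lemma_frequency and scaling_5} (indeed as an exact identity) together with the bound $N\lesssim\delta^{-6}$. With the merging step inserted and this justification corrected, your proposal matches the argument of the cited references.
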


\begin{remark}\label{Remark_frequency and scaling}
We should remark that in the proof of Lemma \ref{Lemma_frequency and scaling}, by the construction, we know that the Fourier supports of $f_n^{\beta}$ and $q_n^N$ are mutually disjoint. This crucial fact also implies the conclusion \eqref{Lemma_frequency and scaling_5}. On the other hand, define operators $[\tilde{G}_n^{\beta}]$ on the Fourier side by
\[[\tilde{G}_n^\beta][\hat{f}](\xi):=(\rho_n^{\beta})^{\frac{1}{2}}\hat{f}(\rho_n^{\beta}\xi+\xi_n^{\beta}).\]
Then the conclusion \eqref{Lemma_frequency and scaling_3} means that, in view of the conditional dislocation property Proposition \ref{Prop_dislocation property}, the sequence of operators satisfy
\[[\tilde{G}_n^\beta][\tilde{G}_n^\gamma]^{-1}\rightharpoonup 0\;\; \text{and}\;\; [\tilde{G}_n^\beta]^{-1}[\tilde{G}_n^\gamma]\rightharpoonup 0\]
as $n\to\infty$ for every $\beta\neq \gamma$. Or equivalently on the spatial side, define
\[[G_n^\beta]f(x):=\mathscr{F}^{-1}[\tilde{G}_n^{\beta}]\mathscr{F}f(x) =(\rho_n^{\beta})^{-\frac{1}{2}}e^{-ix(\rho_n^{\beta})^{-1}\xi_n^{\beta}}f\l(\frac{x}{\rho_n^{\beta}}\r).\]
Then the conclusion \eqref{Lemma_frequency and scaling_3} implies that $[G_n^\beta][G_n^\gamma]^{-1}$ and $[G_n^\beta]^{-1}[G_n^\gamma]$ goes to zero as $n$ go to infinity in the weak operator topology of $\mathcal{B}(L^2)$ for $\beta\neq \gamma$. This comes from the dual approach on $L^2(\bR)$ and Plancherel theorem as follows
\begin{align*}
\l\langle [\tilde{G}_n^{\beta}][\tilde{G}_n^{\gamma}]^{-1}[\hat{f}], \hat{g}\r\rangle_{\xi}
&=\l\langle [\tilde{G}_n^{\gamma}]^{-1}\hat{f}, [\tilde{G}_n^{\beta}]^{-1}\hat{g}\r\rangle_{\xi} =\l\langle \mathscr{F}[G_n^\gamma]^{-1}f, \mathscr{F}[G_n^\beta]^{-1}g\r\rangle_{\xi} \\
&\sim \l\langle[G_n^\gamma]^{-1}f, [G_n^\beta]^{-1}g\r\rangle_x = \l\langle[G_n^\beta][G_n^\gamma]^{-1}f, g\r\rangle_x.
\end{align*}
\end{remark}

\section{Second-step decomposition: time and space translations}\label{Sec_Second-step}
After the first-step decomposition Lemma \ref{Lemma_frequency and scaling}, indeed we have obtained the desired frequency and scaling parameters. Hence in this section, we are devoted to getting the time and space translation parameters. Recall that the dislocation property (or equivalently the Galilean invariance) always play an important role in the classical case \cite{BV_2007, Bourgain_1998, CK_2007, MV_1998}. However this Galilean invariance is not valid in our $\alpha$-Strichartz setting and also note that $\alpha$ may not be a natural number. Thus our strategy is using the conditional dislocation property Proposition \ref{Prop_dislocation property} obtained in Section \ref{Sec_Dislocation from van der Corput}.

To begin this section, one ingredient we need is the following local restriction Lemma \ref{Lemma_localized restriction estimates}. Then we are ready to further decompose the functions $f_n$ obtained in the first-step decomposition and get the time-space translation parameters in Lemma \ref{Lemma_space and time translations}.
\begin{lemma}[Localized restriction]\label{Lemma_localized restriction estimates}
For $4<q<6$ and $\hat{F}\in L^{\infty}(B(\xi_0,R))$ with some $R>0$, we have
\[\l\|[D^{\frac{\alpha-2}{q}}][e^{it|\nabla|^{\alpha}}] F\r\|_{L_{t,x}^q}\leq C_{q,R} \|\hat{F}\|_{L^{\infty}(B(\xi_0, R))}.\]
\end{lemma}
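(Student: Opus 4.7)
The plan is to exploit the compactness of $\supp \hat F \subset B(\xi_0, R)$ via a dispersive decay plus interpolation and time-integration argument. Setting $u(t,x) := [D^{(\alpha-2)/q}][e^{it|\nabla|^\alpha}] F(x)$, its Fourier transform in $x$ is supported in $B(\xi_0, R)$ uniformly in $t$. The idea is to show decay of $\|u(t,\cdot)\|_{L^\infty_x}$ in $t$, combine it with the uniform $L^2_x$ bound, interpolate to an $L^q_x$ bound with time decay, and integrate; the range $q > 4$ will then emerge as the integrability threshold.

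First I would establish two fixed-time bounds. By Plancherel and \holder, using that $\alpha > 1$ and $q > 4$ make the weight $|\xi|^{(\alpha-2)/q}$ locally $L^2$-integrable, one has
\[\|u(t,\cdot)\|_{L^2_x} = \l\||\xi|^{(\alpha-2)/q}\hat F\r\|_{L^2(B(\xi_0, R))} \leq C_{q, R, \xi_0}\|\hat F\|_{L^\infty(B(\xi_0, R))}\]
uniformly in $t$. For the dispersive $L^\infty_x$ bound, apply the van der Corput lemma \cite[p.~334, Corollary]{Stein_1993} to
\[u(t,x) = \frac{1}{2\pi}\int_{B(\xi_0, R)} e^{i(x\xi - t|\xi|^\alpha)}\,|\xi|^{(\alpha-2)/q}\,\hat F(\xi)\,\ddd\xi,\]
using $|\partial_\xi^2(x\xi - t|\xi|^\alpha)| = |t|\alpha(\alpha-1)|\xi|^{\alpha-2}$, bounded below by a positive multiple of $|t|$ on $B(\xi_0, R)$ whenever the support is away from the origin. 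Combined with the trivial uniform bound, this yields $\|u(t,\cdot)\|_{L^\infty_x} \lesssim_{q, R, \xi_0} \langle t\rangle^{-1/2}\|\hat F\|_{L^\infty}$.

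Interpolating these bounds via $\|u(t,\cdot)\|_{L^q_x} \leq \|u(t,\cdot)\|_{L^2_x}^{2/q}\|u(t,\cdot)\|_{L^\infty_x}^{1 - 2/q}$ gives $\|u(t,\cdot)\|_{L^q_x} \lesssim \langle t\rangle^{-(1/2)(1 - 2/q)}\|\hat F\|_{L^\infty}$, and raising to the $q$-th power and integrating in $t$ yields
\[\|u\|_{L^q_{t,x}}^q \lesssim_{q, R, \xi_0} \|\hat F\|_{L^\infty}^q \int_{\bR}\langle t\rangle^{-q/2 + 1}\,\ddd t,\]
which is finite precisely when $q/2 - 1 > 1$, i.e.\ when $q > 4$, matching the hypothesis. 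The upper bound $q < 6$ in the statement is inessential to this scheme (the same argument extends to all $q > 4$) and is only what the paper needs for its applications.

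The main obstacle is the dispersive estimate in two subtle situations. First, when $0 \in B(\xi_0, R)$ and $\alpha$ is not an integer, the phase $|\xi|^\alpha$ loses regularity at the origin---$|\xi|^{\alpha-2}$ vanishing for $\alpha > 2$ and blowing up for $1 < \alpha < 2$---and the weight $|\xi|^{(\alpha-2)/q}$ is singular at $0$ when $\alpha < 2$. I would address this by a smooth dyadic partition $\hat F = \sum_j \hat F_j$ around the origin with $\supp \hat F_j \subset \{|\xi| \sim 2^{-j}\} \cap B(\xi_0, R)$; on each shell, van der Corput applies with an explicit $j$-dependent rescaling, and the hypotheses $\alpha > 1$ and $q > 4$ ensure the resulting geometric series in $j$ converges. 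Second, the van der Corput lemma in its standard form requires a $BV$ amplitude rather than merely $L^\infty$; this is handled by splitting $\hat F$ into a smooth part (where van der Corput applies) plus an $L^2$-controlled error treated via Plancherel, mollifying on the Fourier side at a scale dictated by $|t|$.
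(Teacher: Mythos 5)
Your scheme cannot work, and the obstruction is not the technical BV-versus-$L^\infty$ issue you flag at the end but a structural one: the fixed-time dispersive estimate $\|u(t,\cdot)\|_{L^\infty_x}\lesssim \langle t\rangle^{-1/2}\|\hat F\|_{L^{\infty}(B(\xi_0,R))}$ is simply false as a bound uniform over the unit ball of $L^{\infty}(B(\xi_0,R))$. Take $\hat F(\xi)=e^{it_0|\xi|^{\alpha}}\mathbb{1}_{B(\xi_0,R)}(\xi)$: then $|u(t_0,0)|=\frac{1}{2\pi}\int_{B(\xi_0,R)}|\xi|^{(\alpha-2)/q}\,\ddd\xi$ is a fixed positive constant while $\|\hat F\|_{L^\infty}=1$, so no decay in $t_0$ is possible. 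The same example (with the weight compensated) shows that the interpolated fixed-time operator norm $\|T_t\|_{L^\infty(B)\to L^q_x}$ is bounded \emph{below} uniformly in $t$, since $\hat F(\xi)=e^{it|\xi|^{\alpha}}|\xi|^{-(\alpha-2)/q}$ turns $u(t,\cdot)$ into the inverse Fourier transform of an indicator, whose $L^q_x$ norm is a $t$-independent constant. Consequently \emph{any} argument of the form ``fixed-time operator bound with temporal decay, then integrate in $t$'' is dead on arrival: the data can focus perfectly at any single prescribed time, and the content of the lemma is precisely that it cannot do so on a set of times of positive measure. Your proposed repair by mollification does not help, because $\|\hat F-\hat F\ast\rho_\delta\|_{L^2}$ need not be small for a general $L^\infty$ function, and more fundamentally because the counterexample above is an exact extremizer of the fixed-time functional, not an artifact of low regularity.

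The paper's proof avoids time-slicing entirely by working with the squared (bilinear) quantity: one writes $\|u\|_{L^q_{t,x}}^2=\|u^2\|_{L^{q/2}_{t,x}}$, expresses $u^2(t,x)$ as a two-dimensional oscillatory integral in $(\xi,\eta)$, changes variables to $(u,v)=(\xi+\eta,-|\xi|^{\alpha}-|\eta|^{\alpha})$ so that $u^2$ becomes a genuine space-time Fourier transform, and applies the Hausdorff--Young inequality with exponent $r=q/2\in(2,3)$. After undoing the change of variables, $\|\hat F\|_{L^\infty}^2$ is pulled out pointwise and everything reduces to the local integrability over $B(\xi_0,R)\times B(\xi_0,R)$ of the Jacobian factor $|\xi\eta|^{(\alpha-2)(r'-1)/2}\,\big|\xi|\xi|^{\alpha-2}-\eta|\eta|^{\alpha-2}\big|^{-(r'-1)}$, whose only singularity is $\sim|\xi-\eta|^{-(r'-1)}$ on the diagonal; this is integrable exactly because $r'-1<1$, i.e.\ $q>4$. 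So the threshold $q>4$ you recover by time-integration numerology actually arises, in the correct proof, from Hausdorff--Young duality and the order of the diagonal singularity. If you want to salvage a ``physical-space'' proof you must use an argument that exploits oscillation in $t$ and $x$ jointly (e.g.\ a $TT^*$ or bilinear kernel estimate), not a pointwise-in-time decay bound.
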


\begin{proof}[\textbf{Proof of Lemma \ref{Lemma_localized restriction estimates}}]
Similarly as what we have done in the proof of Proposition \ref{Prop_alpha-Refined Strichartz}, the desired estimate is equivalent to the following bilinear form
\begin{equation}\label{equation_localized restriction estimates_1}
\l\|\int_{B(\xi_0,R)}\int_{B(\xi_0,R)} e^{ix(\xi+\eta)-it(|\xi|^{\alpha}+|\eta|^{\alpha})}|\xi|^{\frac{\alpha-2}{q}}|\eta|^{\frac{\alpha-2}{q}} \hat{F}(\xi)\hat{F}(\eta) \ddd\xi\ddd \eta\r\|_{L_{t,x}^{\frac{q}{2}}} \lesssim_{q,R} \|\hat{F}\|^2_{L^{\infty}(B(\xi_0,R))}.
\end{equation}
By changing of variables
\[(u,v):=(\xi+\eta, -|\xi|^{\alpha}-|\eta|^{\alpha}),\]
using the Hausdorff-Young inequality and then changing the variables back, we conclude that the left hand side of \eqref{equation_localized restriction estimates_1} is bounded by
\begin{equation}\label{equation_localized restriction estimates_2}
C \l(\int_{B(\xi_0, R)\times B(\xi_0, R)} |\hat{F}(\xi)\hat{F}(\eta)|^{r'} |\xi|^{\frac{(\alpha-2)r'}{2r}}|\eta|^{\frac{(\alpha-2)r'}{2r}} |J(\xi,\eta)|^{r'-1} \ddd\xi\ddd\eta\r)^{\frac{1}{r'}}
\end{equation}
where
\[r:=\frac{q}{2}\in(2,3),\quad J(\xi,\eta)^{-1}:=\alpha(\xi|\xi|^{\alpha-2}-\eta|\eta|^{\alpha-2}).\]
We then consider the Jacobian factor
\[\tilde{J}(\xi,\eta):=|\xi\eta|^{\frac{(\alpha-2)r'}{2r}}|J(\xi,\eta)|^{r'-1}= \frac{|\xi\eta|^{\frac{(\alpha-2)(r'-1)}{2}}}{\big|\xi|\xi|^{\alpha-2}-\eta|\eta|^{\alpha-2}\big|^{r'-1}}.\]
It is easy to see that $\tilde{J}$ can only has singularity at the following singular line
\[\xi=\eta.\]
By investigating the order of the singularity of $\tilde{J}$ at this singular line, we know that
\[\l\|\tilde{J}(\xi,\eta)\r\|_{L^1_{\xi,\eta}(B(\xi_0,R)\times B(\xi_0,R))}\lesssim_{R,r'} 1.\]
Therefore we can control \eqref{equation_localized restriction estimates_2} by
\[C_{q,R} \|\hat{F}\|^2_{L^{\infty}(B(\xi_0,R))},\]
which leads to the desired result \eqref{equation_localized restriction estimates_1} and thereby the proof is completed.
\end{proof}

\begin{definition}[Limit-orthogonality for sequences of operators]
For fixed $j\neq k$, we say that two sequences of operators $([g_n^j])$ and $([g_n^k])$ in $\mathcal{B}(L^2)$ are \textit{limit-orthogonal} if
\[[g_n^j]^{-1}[g_n^k]\rightharpoonup 0, \quad n\to\infty.\]
\end{definition}

\begin{lemma}[Time-space translations]\label{Lemma_space and time translations}
Let $\mathbb{F}:=(f_n)_{n\geq1}$ be a sequence of $L^2(\bR)$ functions. Define the unitary operators $[\tilde{G}_n]$ and $[G_n]$ on $L^2(\bR)$ by
\[[\tilde{G}_n]f(x):=(\rho_n)^{\frac{1}{2}}f(\rho_nx+\xi_n),\quad [G_n]f(x):=\mathscr{F}^{-1}[\tilde{G}_n]\mathscr{F}f(x)=(\rho_n)^{-\frac{1}{2}} e^{-ix\frac{\xi_n}{\rho_n}} f\l(\frac{x}{\rho_n}\r).\]
If we assume that the following condition
\[\l|[\tilde{G}_n][\hat{f_n}](\xi)\r|\leq \hat{F}(\xi),\quad \hat{F}\in L^{\infty}(K)\]
holds for some compact set $K\subset \bR$ independent of $n$. Then up to subsequences, there exist
\[\{(s_n^j, y_n^j)_{j\geq1}\}\subset \bR\times\bR,\quad \{(\phi^j)_{j\geq1}\}\subset L^2(\bR),\quad [g_n^j]\phi(x):=[e^{-is_n^j|\nabla|^{\alpha}}]\phi(x-y_n^j)\]
such that the operators $[g_n^j][G_n]^{-1}$ satisfy the following limit-orthogonality property
\begin{equation}\label{Lemma_space and time translations_1}
[G_n][g_n^j]^{-1}[g_n^k][G_n]^{-1}\rightharpoonup 0, \quad n\to\infty
\end{equation}
for every $j\neq k$. Meanwhile, for every $M\geq1$ there exist $e_n^M\in L^2(\bR)$ and the decomposition
\begin{equation}\label{Lemma_space and time translations_2}
f_n(x)=\sum_{j=1}^{M} [g_n^j][G_n]^{-1}\phi^j(x)+e_n^M(x)
\end{equation}
with the vanishing Strichartz norm estimate for the remainder
\begin{equation}\label{Lemma_space and time translations_3}
\lim_{M\to\infty}\lim_{n\to\infty} \l\|[D^{\frac{\alpha-2}{6}}][e^{it|\nabla|^{\alpha}}]e_n^M\r\|_{L_{t,x}^6}=0.
\end{equation}
Furthermore, for every $M\geq1$ we have the $L^2$-almost orthogonal identity
\begin{equation}\label{Lemma_space and time translations_4}
\lim_{n\to\infty}\l(\|f_n\|_{L^2}^2-\l(\sum_{j=1}^{M}\|\phi^j\|_{L^2}^2+\|e_n^M\|_{L^2}^2\r)\r)=0.
\end{equation}
\end{lemma}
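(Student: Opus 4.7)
The plan is to mimic the classical second-step profile extraction (as in Bégout–Vargas, Keraani) while using the conditional dislocation property Proposition \ref{Prop_dislocation property} in place of the Galilean invariance, which is only available when $\alpha=2$. First I would set $v_n:=[G_n]f_n$, so the hypothesis gives that $\hat{v}_n$ is supported in the fixed compact set $K$ with $|\hat{v}_n|\leq \hat F\in L^\infty(K)$. The change of variables $\tilde x=\rho_n x$, $\tilde t=\rho_n^\alpha t$ combined with $\xi=\xi_n+\rho_n\eta$ yields
\[
\bigl\|[D^{(\alpha-2)/6}][e^{it|\nabla|^\alpha}]f_n\bigr\|_{L^6_{t,x}}=\|W_n\|_{L^6_{\tilde t,\tilde x}},
\]
where $W_n$ is a frequency-shifted (by $\mu_n:=\xi_n/\rho_n$) Strichartz-type propagation of $v_n$, whose Fourier multiplier is still supported in $K$. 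This puts us in the regime where Lemma \ref{Lemma_localized restriction estimates} applies uniformly in $n$.

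Next I would iterate. Let $e_n^0:=f_n$ and, after extracting $\phi^1,\ldots,\phi^{j-1}$, set $e_n^{j-1}:=f_n-\sum_{i<j}[g_n^i][G_n]^{-1}\phi^i$ and $\delta_j:=\limsup_n\bigl\|[D^{(\alpha-2)/6}][e^{it|\nabla|^\alpha}]e_n^{j-1}\bigr\|_{L^6_{t,x}}$. If $\delta_j=0$ the construction terminates; otherwise Lemma \ref{Lemma_localized restriction estimates} together with the interpolation $\|u\|_{L^6}^6\leq\|u\|_{L^\infty}^{6-q}\|u\|_{L^q}^q$ for some fixed $4<q<6$ forces $\|W_n^{j-1}\|_{L^\infty_{\tilde t,\tilde x}}\gtrsim\delta_j^{6/(6-q)}$. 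Choosing $(\tilde s_n^j,\tilde y_n^j)$ nearly realizing this $L^\infty$-sup and undoing the scaling through $[G_n]$ produces parameters $(s_n^j,y_n^j)$ along which, after passing to a subsequence, $[G_n][g_n^j]^{-1}e_n^{j-1}\rightharpoonup \phi^j$ weakly in $L^2$, with $\|\phi^j\|_{L^2}^2\gtrsim\delta_j^{12/(6-q)}$. The $L^2$-Pythagorean identity \eqref{Lemma_space and time translations_4} follows from this weak convergence by the standard expansion of $\|e_n^{j-1}\|_{L^2}^2-\|e_n^j\|_{L^2}^2$, and the summability $\sum_j\|\phi^j\|_{L^2}^2\leq\limsup_n\|f_n\|_{L^2}^2<\infty$ then forces $\delta_j\to 0$, giving \eqref{Lemma_space and time translations_3}.

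For the limit-orthogonality \eqref{Lemma_space and time translations_1}, I would unpack $[G_n][g_n^j]^{-1}[g_n^k][G_n]^{-1}$ as a product within the family covered by Proposition \ref{Prop_dislocation property}, observing that the scaling-modulation parameters $(\rho_n,\xi_n)$ from $[G_n]$ appear symmetrically on both sides. If this product did not converge weakly to zero along some subsequence, Proposition \ref{Prop_dislocation property} would give strong convergence to a nonzero unitary $G^{jk}$; this would allow the weak limit defining $\phi^k$ to be already captured through the parameters $(s_n^j,y_n^j)$, contradicting the fact that $\phi^j$ had been subtracted when forming $e_n^{k-1}$.

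The main obstacle is exactly this last verification. For $\alpha=2$, Galilean invariance makes orthogonality of parameters automatic, whereas here the non-integer exponent and the merely \emph{conditional} nature of Proposition \ref{Prop_dislocation property} require one to show that the Strichartz-maximization procedure always selects genuinely dislocated parameters, treating separately the regimes $\mu_n$ bounded and $|\mu_n|\to\infty$ through the van der Corput analysis of Section \ref{Sec_Dislocation from van der Corput}.
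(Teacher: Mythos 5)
Your proposal is correct and follows essentially the same route as the paper's proof: a greedy profile extraction whose pairwise orthogonality is enforced by the conditional dislocation property Proposition \ref{Prop_dislocation property} (applicable here precisely because every profile carries the same scaling--modulation pair $(\rho_n,\xi_n)$ coming from $[G_n]$), with the remainder controlled via the localized restriction Lemma \ref{Lemma_localized restriction estimates}, a Bernstein/H\"older interpolation with an intermediate exponent $4<q<6$, and testing the $L^{\infty}_{t,x}$ supremum against $\check{\mathbb{I}}_{K_n}$. The only organizational difference is that the paper selects profiles by maximizing the $L^2$ mass of weak limits (the quantity $\mu(\mathbb{P}^M)$) and performs the conversion from $\mu(\mathbb{E}^M)\to0$ to Strichartz decay once at the very end, whereas you select by the Strichartz norm directly and therefore invoke the same conversion, run in the reverse direction, at each step of the iteration; the two schemes are equivalent.
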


\begin{proof}[\textbf{Proof of Lemma \ref{Lemma_space and time translations}}]
We adopt some ideas from \cite[Lemma 5.2]{JPS_2010} and \cite[Section 3]{CK_2007}, while similar approaches also arise in earlier papers \cite{BG_1999, Keraani_2001} and some of the references aforementioned. However as we have stated before, to generalize these classical arguments into our $\alpha$-Strichartz setting, we should use the conditional dislocation property Proposition \ref{Prop_dislocation property}. Take $\mathbb{P}:=(P_n)_{n\geq1}$ with
\[\hat{P}_n(\xi)=[\tilde{G}_n][\hat{f}_n](\xi).\]
Let $\mathscr{W}(\mathbb{P})$ be the set of weak limits in $L^2(\bR)$ for subsequences of $[G_n][g_n]^{-1}[G_n]^{-1}\mathbb{P}$ defined by
\[\mathscr{W}(\mathbb{P}):=\l\{w\!\!-\!\!\!\lim_{n\to\infty} [G_n][g_n]^{-1}[G_n]^{-1}P_n(x): (s_n,y_n)\in\bR^2\r\}, \quad [g_n]\phi(x):=[e^{-is_n|\nabla|^{\alpha}}]\phi(x-y_n),\]
and then define
\[\mu(\mathbb{P}):=\sup\l\{\|\phi\|_{L^2}: \phi\in\mathscr{W}(\mathbb{P})\r\}.\]
To get the desired decomposition \eqref{Lemma_space and time translations_2}, our strategy is to get the decomposition for $P_n$ as follows
\begin{equation}\label{equ_space and time translations_1}
P_n(x)=\sum_{j=1}^{M} [G_n][g_n^j][G_n]^{-1}\phi^j(x)+p_n^M(x),
\end{equation}
and then set $e_n^M(x):=[G_n]^{-1} p_n^M(x)=\sqrt{\rho_n} e^{ix\xi_n}p_n^M(\rho_n x)$. Similarly define the following
\[\mathbb{P}^M:=(p_n^M)_{n\geq1},\quad \mathbb{E}^M:=(e_n^M)_{n\geq1}.\]
Firstly, we claim that if the conclusion \eqref{Lemma_space and time translations_3} in Lemma \ref{Lemma_space and time translations} is replaced by
\begin{equation}\label{equ_space and time translations_2}
\lim_{M\to\infty}\mu(\mathbb{E}^M)=\lim_{n\to\infty}\mu(\mathbb{P}^M)=0,
\end{equation}
then this lemma is true even if we do not have the assumption that $K$ is a compact set independent of $n$. We show this claim as follows.

Indeed, if $\mu({\mathbb{P}})=0$, then we can take $\phi^j=0$ for all $j$ and the claim is proved. Otherwise if $\mu(\mathbb{P})>0$, we take $\phi^1\in \mathscr{W}(\mathbb{P})$ such that
\[\|\phi^1\|_{L^2}\geq \frac{\mu(\mathbb{P})}{2}>0.\]
By the definition of $\mathscr{W}(\mathbb{P})$, there exists a sequence $(s_n^1,y_n^1)\in\bR^2$ such that, up to extracting a subsequence, we have
\begin{equation}\label{equ_space and time translations_3}
[G_n][g_n^1]^{-1}[G_n]^{-1}P_n\rightharpoonup \phi^1
\end{equation}
in $L^2(\bR)$ as $n$ goes to infinity. Setting $p_n^1:=P_n-[G_n][g_n^1][G_n]^{-1}\phi^1$, then we obtain
\[\lim_{n\to\infty}\l(\|P_n\|_{L^2}^2-\|\phi^1\|_{L^2}^2-\|p_n^1\|_{L^2}^2\r)=0,\]
due to the weak convergency \eqref{equ_space and time translations_3} and the fact that $L^2(\bR)$ is a Hilbert space. Notice that all these operators involved are unitary operators on $L^2$. Therefore the almost orthogonal identity \eqref{Lemma_space and time translations_4} holds for $M=1$. Next, we replace $P_n$ by $p_n^1$ and then do this process again. If $\mu(\mathbb{P}^1)>0$, we get the function $\phi^2$, the sequence of parameters $(s_n^2, y_n^2)$ and the sequence of functions $\mathbb{P}^2$. Moreover, we have one more conclusion as follows: the sequence of operators
\[[G_n][g_n^2]^{-1}[g_n^1][G_n]^{-1}\rightharpoonup 0\]
in $\mathcal{B}(L^2)$ as $n$ goes to infinity. Indeed if this conclusion is not true, then the dislocation property Proposition \ref{Prop_dislocation property} asserts that, up to subsequences, there exists an isometric $[g^{1,2}]$ on $L^2(\bR)$ satisfying
\[[G_n]^{-1}[g_n^2]^{-1}[g_n^1][G_n]\to [g^{1,2}]\]
in $\mathcal{B}(L^2)$ as $n$ goes to infinity. Therefore the following relation
\[[G_n][g_n^2]^{-1}[G_n]^{-1}p_n^1=\l([G_n][g_n^2]^{-1}[g_n^1][G_n]^{-1}\r) [G_n][g_n^1]^{-1}[G_n]^{-1}p_n^1\]
and the weak convergency fact \eqref{equ_space and time translations_3} imply that $\phi^2=0$, which means $\mu(\mathbb{P}^2)=0$. This is a contradiction. Iterating this process leads to
\begin{equation*}\begin{array}{ccc}
p_n^1:= P_n -[G_n][g_n^1][G_n]^{-1}\phi^1, & [G_n][g_n^1]^{-1}[G_n]^{-1}P_n\rightharpoonup \phi^1, & \|\phi^1\|_{L^2}\geq \frac{\mu(\mathbb{P})}{2}>0; \\\\
p_n^2:= p_n^1 -[G_n][g_n^2][G_n]^{-1}\phi^2, & [G_n][g_n^2]^{-1}[G_n]^{-1}p_n^1\rightharpoonup \phi^2, & \|\phi^2\|_{L^2}\geq \frac{\mu(\mathbb{P}^1)}{2}>0; \\\\
\vdots &\vdots & \vdots\\\\
p_n^j:=p_n^{j-1}-[G_n][g_n^{j}][G_n]^{-1}\phi^{j}, & [G_n][g_n^j]^{-1}[G_n]^{-1}p_n^{j-1}\rightharpoonup \phi^j, & \|\phi^j\|_{L^2}\geq \frac{\mu(\mathbb{P}^{j-1})}{2}>0; \\\\
\vdots &\vdots & \vdots\\\\
\end{array}
\end{equation*}
A diagonal process yields a sequence of functions $(\phi^j)_{j\geq1}$ and a family of operators $[g_n^j]$ satisfying the orthogonal conclusion \eqref{Lemma_space and time translations_1} for the case $j=k+1$. By the construction, we get the decomposition identity \eqref{equ_space and time translations_1} and the almost orthogonal identities \eqref{Lemma_space and time translations_4}. To prove the desired claim, it remains for us to show the conclusion \eqref{Lemma_space and time translations_1} for all $j\neq k$ and the estimate \eqref{equ_space and time translations_2}. We show the estimate \eqref{equ_space and time translations_2} first. Recall that $\|f_n\|_{L^2}$ is uniformly bounded. Then \eqref{Lemma_space and time translations_4} implies
\[\sum_{j=1}^{M}\|\phi^j\|_{L^2}^2\leq\limsup_{n\to\infty}\|f_n\|_{L^2}^2\leq C.\]
Hence we know that the positive series $\sum_j \|\phi^j\|_{L^2}^2$ is convergent and further $\lim_{n\to\infty}\|\phi^j\|_{L^2}=0$. On the other hand, by the construction we have
\[\mu(\mathbb{P}^M)\leq 2\|\phi^{M+1}\|_{L^2},\]
which gives the desired estimate \eqref{equ_space and time translations_2}. Now we turn to the conclusion \eqref{Lemma_space and time translations_1}. Indeed, the more general case $j=k+m (m\in\bZ_{+})$ comes from the basic case $j=k+1$, the following identity
\[p_n^{k+m-1}=p_n^k-[G_n][g_n^{k+1}][G_n]^{-1}\phi^{k+1}- \cdots- [G_n][g_n^{k+m-1}][G_n]^{-1}\phi^{k+m-1},\]
and an inductive argument. For the case $j=k+m (m\in\bZ_{-})$, if there does not hold the following
\[[G_n][g_n^j]^{-1}[g_n^k][G_n]^{-1}\rightharpoonup 0\]
in $\mathcal{B}(L^2)$ as $n$ goes to infinity, then by the dislocation property Proposition \ref{Prop_dislocation property} we can assume
\[[G_n][g_n^j]^{-1}[g_n^k][G_n]^{-1}\to [g^{j,k}], \quad [g^{j,k}]\in\mathcal{B}(L^2), \quad [g^{j,k}]\neq 0\]
in $\mathcal{B}(L^2)$ as $n$ goes to infinity. In this case we obviously have $[g^{j,k}]^{-1}=[g^{k,j}]$. Hence we can investigate the sequence $[G_n][g_n^k]^{-1}[g_n^j][G_n]^{-1}$ and turn the case $j=k+m (m\in\bZ_{-})$ into the case $j=k+m (m\in\bZ_{+})$ which we have already proved. Therefore we complete the proof of the claim.

To totally finish the proof of this Lemma \ref{Lemma_space and time translations}, our next target is to get the desired conclusion \eqref{Lemma_space and time translations_3} from the estimate \eqref{equ_space and time translations_2} by using the localized restriction estimate Lemma \ref{Lemma_localized restriction estimates}.

Notice that we have the compact set $K$ and the operators $[g_n^j]$ do not change the support on the Fourier side. It means that when we get the above decomposition with conclusion \eqref{equ_space and time translations_2}, on the Fourier side, all the processes are taken place on this compact set $K$. Therefore, we conclude $\hat{\phi}^j\in L^{\infty}(K)$ and further $\hat{p}_n^M\in L^{\infty}(K)$. Since the Fourier support for $e_n^M$ is not ideal, we first use some scaling skills as follows
\begin{align*}
\l\|[D^{\frac{\alpha-2}{6}}][e^{it|\nabla|^{\alpha}}]e_n^M\r\|_{L_{t,x}^6}
&=\l\|[D^{\frac{\alpha-2}{6}}] [e^{it|\nabla|^{\alpha}}] [\sqrt{\rho_n} e^{i(\cdot)\xi_n}p_n^M(\rho_n \cdot)]\r\|_{L_{t,x}^6} \\
&=\l\|[D^{\frac{\alpha-2}{6}}] [e^{it|\nabla|^{\alpha}}] [e^{i(\cdot)\frac{\xi_n}{\rho_n}}p_n^M]\r\|_{L_{t,x}^6}.
\end{align*}
Then we investigate the function
\[\omega_n^M(x):=e^{ix\frac{\xi_n}{\rho_n}}p_n^M(x),\]
with the Fourier support information $\supp (\hat{\omega}_n^M)\subset K+(\rho_n)^{-1}\xi_n$. The \holder inequality and the Bernstein inequality imply that
\[\l\|[D^{\frac{\alpha-2}{6}}][e^{it|\nabla|^{\alpha}}]\omega_n^M\r\|_{L_{t,x}^6}\lesssim_K \l\|[D^{\frac{\alpha-2}{q}}][e^{it|\nabla|^{\alpha}}]\omega_n^M\r\|_{L_{t,x}^q}^{q/6} \l\|[e^{it|\nabla|^{\alpha}}]\omega_n^M\r\|_{L_{t,x}^{\infty}}^{1-q/6}\]
for $4<q<6$. Meanwhile, Lemma \ref{Lemma_localized restriction estimates} gives the following estimate
\[\l\|[D^{\frac{\alpha-2}{q}}][e^{it|\nabla|^{\alpha}}]\omega_n^M\r\|_{L_{t,x}^q} \lesssim_K 1,\]
which is independent of $n$ and $M$. Hence, to get the desired result \eqref{Lemma_space and time translations_3}, it suffices to prove
\[\lim_{M\to\infty}\limsup_{n\to\infty}\l\|[e^{it|\nabla|^{\alpha}}]\omega_n^M\r\|_{L_{t,x}^{\infty}}=0.\]
Moreover by \eqref{equ_space and time translations_2}, it suffices to prove the following claim
\begin{equation}\label{equ_space and time translations_4}
\limsup_{n\to\infty} \l\|[e^{it|\nabla|^{\alpha}}]\omega_n^M\r\|_{L_{t,x}^{\infty}}\lesssim_{K} \mu(\mathbb{E}^M).
\end{equation}
Indeed, choose an even function $\mathbb{I}_K\in C_c^{\infty}(\bR)$ satisfying $\mathbb{I}_K=1$ on $K$ and choose $(a_n, b_n)$ such that
\[\l\|[e^{it|\nabla|^{\alpha}}]\omega_n^M\r\|_{L_{t,x}^{\infty}}=\l|[e^{ia_n|\nabla|^{\alpha}}]\omega_n^M(b_n)\r|.\]
Define
\[\mathbb{I}_{K_n}(x):=\mathbb{I}_{K}\l(x-(\rho_n)^{-1}\xi_n\r), \quad \Omega_n^M(t,x):=[e^{it|\nabla|^{\alpha}}]\omega_n^M(x).\]
It follows that
\[\Omega_n^M(t,x)=\mathscr{F}^{-1}e^{-it|\xi|^{\alpha}}\mathbb{I}_{K_n}(\xi)\mathscr{F}\omega_n^M(x), \quad \Omega_n^M(a_n, x+b_n)\in \mathscr{W}(\mathbb{P}^M).\]
Then using some basic properties for the spatial Fourier transform $\mathscr{F}$ and $\mathbb{I}_K$, by \holder inequality we can control the $\l\|\Omega_n^M\r\|_{L_{t,x}^{\infty}}$ term as follows
\begin{align*}
\l\|\Omega_n^M\r\|_{L_{t,x}^{\infty}} &=\l|\Omega_n^M(a_n,b_n)\r| =\l|\mathscr{F}^{-1}[\mathbb{I}_{K_n}\hat{\Omega}_n^M](a_n,b_n)\r| \\
&\sim \lim_{n\to\infty}\l|\check{\mathbb{I}}_{K_n}\ast \Omega_n^M(a_n, b_n)\r| \\
&=\lim_{n\to\infty}\l|\int_{\bR} \check{\mathbb{I}}_{K_n}(x) \Omega_n^M(a_n, x+b_n) \ddd x\r| \\
&\leq \l\|\check{\mathbb{I}}_{K_n}\r\|_{L_x^2} \mu(\mathbb{P}^M)\lesssim_K \mu(\mathbb{P}^M).
\end{align*}
Therefore we can obtain the desired result \eqref{equ_space and time translations_4} and finish the proof.
\end{proof}

\begin{remark}\label{Remark_space and time translations}
As has been pointed out in \cite[Remark 5.3]{JPS_2010}, we can make a reduction in Lemma \ref{Lemma_space and time translations} when
\[\lim_{n\to\infty}(\rho_n)^{-1}\xi_n=a, \quad |a|<\infty.\]
In this case we can assume $\xi_n\equiv0$ since we can replace $e^{ix(\rho_n)^{-1}\xi_n}\phi^{j}$ by $e^{ixa}\phi^j$, put the difference into the error term and then regard $e^{ixa}\phi^j$ as the new $\phi^j$.
\end{remark}

\section{Profile decomposition of alpha-Strichartz version}\label{Sec_Profile decomposition}
In this section, with the two steps of decomposition Lemma \ref{Lemma_frequency and scaling} and Lemma \ref{Lemma_space and time translations} at hand, we are able to show the desired $\alpha$-Strichartz version profile decomposition results Proposition \ref{Prop_linear profile decomposition} and the Strichartz-orthogonality of profiles Proposition \ref{Prop_Strichartz-orthogonal profiles}. It should be pointed out that, in the proof of Proposition \ref{Prop_Strichartz-orthogonal profiles}, we use the conditional dislocation property Proposition \ref{Prop_dislocation property} once more to coordinate the limit-orthogonal property conclusion \eqref{Prop_linear profile decomposition_3} in Proposition \ref{Prop_linear profile decomposition}.

\begin{proof}[\textbf{Proof of Proposition \ref{Prop_linear profile decomposition}}]
Using the Lemma \ref{Lemma_frequency and scaling} with $\frac{\delta}{2}$ and then using Lemma \ref{Lemma_space and time translations} properly, we can obtain the decomposition
\begin{equation}\label{equation_linear profile decomposition_1}
u_n(x)=\sum_{\beta=1}^N\l(\sum_{j=1}^{M_{\beta}} [g_n^{\beta,j}][G_n^{\beta}]^{-1}\phi^{\beta, j}(x)\r)+ e_{n}^{N, M_1,\ldots, M_{N}}(x),
\end{equation}
where the remainder term is
\[e_{n}^{N, M_1,\ldots, M_{N}}(x):=\sum_{\beta=1}^{N}e_n^{M_{\beta}}+q_n^N\]
and the operators in \eqref{equation_linear profile decomposition_1} are defined by
\[\quad [G_n^\beta]f(x):= (\rho_n^{\beta})^{-\frac{1}{2}} e^{-ix(\rho_n^{\beta})^{-1}\xi_n^{\beta}}f\l(\frac{x}{\rho_n^{\beta}}\r), \quad [g_n^{\beta,j}]f(x):=[e^{-is_n^{\beta,j}|\nabla|^{\alpha}}]f(x-y_n^{\beta,j}).\]
Here, for each $1\leq\beta\leq N$, we choose $M_{\beta}$ to guarantee that for all $M\geq M_{\beta}$ there holds
\[\limsup_{n\to\infty} \l\|[D^{\frac{\beta-2}{6}}][e^{it|\nabla|^{\alpha}}]e_n^{M_{\beta}}\r\|_{L_{t,x}^6}\leq \frac{\delta}{2N}.\]
This is realizable since we have the vanishing Strichartz norm estimate \eqref{Lemma_space and time translations_3} for the remainder in Lemma \ref{Lemma_space and time translations}. Therefore, by combining the negligible Strichartz norm estimate \eqref{Lemma_frequency and scaling_4} for the remainder in Lemma \ref{Lemma_frequency and scaling} with $\frac{\delta}{2}$, we have the following norm estimate for the remainder term
\begin{equation}\label{equation_linear profile decomposition_2}
\limsup_{n\to\infty} \l\|[D^{\frac{\beta-2}{6}}][e^{it|\nabla|^{\alpha}}]e_{n}^{N, M_1,\ldots, M_{N}}\r\|_{L_{t,x}^6}\leq \delta.
\end{equation}

As for the sequence of operators $[G_n^{\gamma}][g_n^{\gamma,k}]^{-1}[g_n^{\beta,j}][G_n^{\beta}]^{-1}$, we will investigate the limit of this sequence in the weak operator topology of $\mathcal{B}(L^2)$ as $n$ goes to infinity. If $\beta=\gamma$, then the limit-orthogonal conclusion \eqref{Lemma_space and time translations_1} in Lemma \ref{Lemma_space and time translations} implies
\begin{equation}\label{equation_linear profile decomposition_5}
[G_n^{\gamma}][g_n^{\gamma,k}]^{-1}[g_n^{\beta,j}][G_n^{\beta}]^{-1}\rightharpoonup0
\end{equation}
in $\mathcal{B}(L^2)$ as $n$ goes to infinity. If $\beta\neq \gamma$, then the dual approach and Plancherel theorem give
\begin{align}
\lim_{n\to\infty} \l\langle[G_n^{\gamma}][g_n^{\gamma,k}]^{-1}[g_n^{\beta,j}][G_n^{\beta}]^{-1}f,g\r\rangle_x &=\lim_{n\to\infty} \l\langle[g_n^{\beta,j}][G_n^{\beta}]^{-1}f, [g_n^{\gamma, k}][G_n^{\gamma}]^{-1}g\r\rangle_x \notag\\
&\sim \lim_{n\to\infty} \l\langle\mathscr{F}[g_n^{\beta,j}][G_n^{\beta}]^{-1}f, \mathscr{F}[g_n^{\gamma, k}][G_n^{\gamma}]^{-1}g \r\rangle_{\xi}, \label{equation_linear profile decomposition_3}
\end{align}
where $f$ and $g$ can be assumed to be Schwartz functions with compact Fourier supports. Note that the operators $[g_n^{\beta,j}]$ and $[g_n^{\gamma,k}]$ do not change the Fourier supports. Hence the conclusion \eqref{Lemma_frequency and scaling_3} for the frequency and scaling parameters in Lemma \ref{Lemma_frequency and scaling}, recalling the Remark \ref{Remark_frequency and scaling}, implies the limit value in \eqref{equation_linear profile decomposition_3} is zero and further
\begin{equation}\label{equation_linear profile decomposition_4}
[G_n^{\gamma}][g_n^{\gamma,k}]^{-1}[g_n^{\beta,j}][G_n^{\beta}]^{-1} \rightharpoonup0
\end{equation}
in $\mathcal{B}(L^2)$ as $n$ goes to infinity.

For the $L^2$-orthogonality, combing the $L^2$-almost orthogonal identities \eqref{Lemma_frequency and scaling_5} and \eqref{Lemma_space and time translations_4}, we conclude
\[\lim_{n\to\infty}\l(\|u_n\|_{L^2}^2-\sum_{\beta=1}^N\l(\sum_{j=1}^{M_{\beta}} |\phi^{\beta, j}\|_{L^2}^2+\|e_n^{M_{\beta}}\|_{L^2}^2\r)-\|q_n^N\|_{L^2}^2\r)=0.\]
Recall that the Fourier supports of $q_n^N$ and $e_n^{M_{\beta}}$ are mutually disjoint which comes from the fact that the operators $[g_n^{\beta, j}]$ do not change the Fourier support and the Remark \ref{Remark_frequency and scaling}. Therefore we conclude
\begin{equation}\label{equation_linear profile decomposition_6}
\lim_{n\to\infty}\l(\|u_n\|_{L^2}^2-\l(\sum_{\beta=1}^N\sum_{j=1}^{M_{\beta}} \l\|\phi^{\beta, j}\r\|_{L^2}^2\r)-\l\|e_n^{N, M_1, \ldots, M_{\beta}}\r\|_{L^2}^2\r)=0.
\end{equation}
Notice that the parameters $N$ and $M_{\beta}$ depend only on $\delta$. Hence by enumerating the pairs $(\beta,j)$
\begin{equation}\label{equation_linear profile decomposition_7}
\{(\beta,j)<(\gamma,k)\}:=\{\beta+j<\gamma+k \;\;\text{or,}\;\; \beta+j=\gamma+k \;\;\text{and}\;\; \beta<\gamma\},
\end{equation}
and relabeling the pairs $(\beta,j)$, we can define
\[(h_n^{\tilde{j}}, x_n^{\tilde{j}}, \xi_n^{\tilde{j}}, t_n^{\tilde{j}}):=\l(1/\rho_n^{\beta}, -y_n^{\beta,j}, \xi_n^{\beta}, s_n^{\beta,j}\r), \quad [T(h_n^{\tilde{j}}, x_n^{\tilde{j}}, \xi_n^{\tilde{j}}, t_n^{\tilde{j}})]:=[g_n^{\beta,j}][G_n^{\beta}]^{-1}, \quad \omega_n^{J_{\delta}}:=e_{n}^{N, M_1,\ldots, M_{N}}.\]
Then, after a classical diagonal process, we obtain the desired decomposition \eqref{Prop_linear profile decomposition_1} by \eqref{equation_linear profile decomposition_1}; the limit-orthogonality conclusion \eqref{Prop_linear profile decomposition_3} comes from the weak operator topology convergency \eqref{equation_linear profile decomposition_4} and \eqref{equation_linear profile decomposition_5}; meanwhile the $L^2$-almost orthogonal identity \eqref{Prop_linear profile decomposition_4} comes from \eqref{equation_linear profile decomposition_6}. Therefore it remains for us to prove the Strichartz norm estimate \eqref{Prop_linear profile decomposition_2} for the remainder term in view of the enumeration \eqref{equation_linear profile decomposition_7}. Actually this is a standard consequence. By using the Strichartz-orthogonality of profiles Proposition \ref{Prop_Strichartz-orthogonal profiles}, a $3\varepsilon$ trick will give this desired result \eqref{Prop_linear profile decomposition_2}. We omit the details here for simplicity and the reader can find similar proofs in \cite[p. 107]{Shao_2009} and \cite[p. 371]{Keraani_2001}.
\end{proof}

In order to investigate the extremal problem for the $\alpha$-Strichartz estimates and complete the proof for Proposition \ref{Prop_linear profile decomposition}, we need to show the Strichartz-orthogonality of profiles. Indeed, the limit-orthogonality property \eqref{Prop_linear profile decomposition_3} for $[T_n^j]$ in Proposition \ref{Prop_linear profile decomposition} and the conditional dislocation property Proposition \ref{Prop_dislocation property} imply this desired conclusion.
\begin{proof}[\textbf{Proof of Proposition \ref{Prop_Strichartz-orthogonal profiles}}]
Without loss of generality, we focus on the Schwartz functions $\phi^j$ and $\phi^k$ whose Fourier supports are compact. Based on the conclusion \eqref{Lemma_frequency and scaling_3} for the frequency and scaling parameters in Lemma \ref{Lemma_frequency and scaling}, we first deal with the case
\[\lim_{n\to\infty} \frac{h_n^j}{h_n^k}+\frac{h_n^k}{h_n^j}=\infty.\]
Direct computation gives the following
\begin{align}
\Phi_n^{j,k}(t,x):&=[D^{\frac{\alpha-2}{6}}] [e^{it|\nabla|^{\alpha}}][T_n^j]\phi^j(x) \cdot [D^{\frac{\alpha-2}{6}}] [e^{it|\nabla|^{\alpha}}][T_n^k]\phi^k(x)\notag\\
&= (h_n^j)^{-\frac{1}{2}-\frac{\alpha-2}{6}} \frac{e^{-ix_n^j\xi_n^j}}{2\pi} \int_{\bR} |\xi|^{\frac{\alpha-2}{6}} e^{i\frac{x-x_n^j}{h_n^j}\xi-i\frac{t-t_n^j}{(h_n^j)^{\alpha}}|\xi|^{\alpha}} [e^{i(\cdot)h_n^j\xi_n^j}\phi^j]^{\wedge}(\xi)\ddd\xi \notag\\
&\quad \times (h_n^k)^{-\frac{1}{2}-\frac{\alpha-2}{6}} \frac{e^{-ix_n^k\xi_n^k}}{2\pi} \int_{\bR} |\xi|^{\frac{\alpha-2}{6}} e^{i\frac{x-x_n^k}{h_n^k}\xi-i\frac{t-t_n^k}{(h_n^k)^{\alpha}}|\xi|^{\alpha}} [e^{i(\cdot)h_n^k\xi_n^k}\phi^k]^{\wedge}(\xi)\ddd\xi \label{equation_profiles orthogonal_0.3}\\
&=:(h_n^j)^{-\frac{\alpha+1}{6}}\Phi_n^j\l(\frac{t-t_n^j}{(h_n^j)^{\alpha}}, \frac{x-x_n^j}{h_n^j}\r) \cdot (h_n^k)^{-\frac{\alpha+1}{6}}\Phi_n^k\l(\frac{t-t_n^k}{(h_n^k)^{\alpha}}, \frac{x-x_n^k}{h_n^k}\r) \notag
\end{align}
By the $\alpha$-Strichartz estimate \eqref{equation_symmetric alpha-Strichartz} we have
\[\lim_{R\to\infty}\int_{\{|s|+|y|> R\}} |\Phi_n^j(s,y)|^{6} \ddd y\ddd s=0.\]
Therefore setting
\[B_n^j(R):=\l\{(t,x): \l|\frac{t-t_n^j}{(h_n^j)^{\alpha}}\r|+\l|\frac{x-x_n^j}{h_n^j}\r|\leq R\r\},\]
\holder inequality gives
\[\int_{\bR\setminus B_n^j(R)} |\Phi_n^{j,k}|^{3} \ddd x\ddd t \leq \l(\int_{\{|x|+|t|> R\}} |\Phi_n^j|^{6} \ddd x\ddd t\r)^{\frac{1}{2}} \l(\int_{\bR} |\Phi_n^k|^{6} \ddd x \ddd t\r)^{\frac{1}{2}},\]
and analogously for $\bR\setminus B_n^k(R)$. Thus we are reduced to proving
\begin{equation}\label{equation_profiles orthogonal_1}
\lim_{n\to\infty} (h_n^j h_n^k)^{-\frac{1+\alpha}{2}} \Big|B_n^j(R) \cap B_n^k(R)\Big|=0
\end{equation}
due to the fact that $\Phi_n^j$ and $\Phi_n^k$ are $L_{t,x}^{\infty}$ functions. By the observation
\[\Big|B_n^j(R) \cap B_n^k(R)\Big| \leq C_R \min\l\{(h_n^j)^{1+\alpha}, (h_n^k)^{1+\alpha}\r\},\]
the desired estimate \eqref{equation_profiles orthogonal_1} follows immediately since $h_n^j/h_n^k$ goes to either zero or infinity. Hence we can assume $h_n^j\sim h_n^k$ from now on. Then we turn to investigate the case
\[\lim_{n\to\infty}\l(h_n^j+h_n^k\r)|\xi_n^j-\xi_n^k|=\infty.\]
By symmetry we may assume $\lim_{n\to\infty}h_n^j|\xi_n^j-\xi_n^k|=\infty$, thus from the expression \eqref{equation_profiles orthogonal_0.3} we conclude
\begin{align}
\Phi_n^{j,k}(t,x)&= (h_n^j)^{-\frac{\alpha+1}{6}} \frac{e^{-ix_n^j\xi_n^j}}{2\pi}\int_{\bR} |\xi|^{\frac{\alpha-2}{6}} e^{i\frac{x-(h_n^j)^2\xi_n^j}{h_n^j}\xi-i\frac{t-t_n^j}{(h_n^j)^{\alpha}}|\xi|^{\alpha}} e^{i\frac{(h_n^j)^2\xi_n^j-x_n^j}{h_n^j}\xi}[e^{i(\cdot)h_n^j\xi_n^j}\phi^j]^{\wedge}(\xi)\ddd\xi \notag\\
&\quad \times (h_n^k)^{-\frac{\alpha+1}{6}} \frac{e^{-ix_n^k\xi_n^k}}{2\pi}\int_{\bR} |\xi|^{\frac{\alpha-2}{6}} e^{i\frac{x-(h_n^j)^2\xi_n^k}{h_n^k}\xi-i\frac{t-t_n^k}{(h_n^k)^{\alpha}}|\xi|^{\alpha}} e^{i\frac{(h_n^j)^2\xi_n^k-x_n^k}{h_n^k}\xi}[e^{i(\cdot)h_n^k\xi_n^k}\phi^k]^{\wedge}(\xi)\ddd\xi \notag\\
&=:(h_n^j)^{-\frac{\alpha+1}{6}}\tilde{\Phi}_n^j\l(\frac{t-t_n^j}{(h_n^j)^{\alpha}}, \frac{x-(h_n^j)^2\xi_n^j}{h_n^j}\r) \cdot (h_n^k)^{-\frac{\alpha+1}{6}}\tilde{\Phi}_n^k\l(\frac{t-t_n^k}{(h_n^k)^{\alpha}}, \frac{x-(h_n^j)^2\xi_n^k}{h_n^k}\r). \label{equation_profiles orthogonal_0.5}
\end{align}
Based on the expression \eqref{equation_profiles orthogonal_0.5}, the assumption $h_n^j\sim h_n^k$ gives
\begin{align*}
\|\Phi_n^{j,k}\|_{L_{t,x}^3}& \sim\l\|\tilde{\Phi}_n^j\l(t-\frac{t_n^j}{(h_n^j)^{\alpha}}, x-{h_n^j\xi_n^j}\r) \tilde{\Phi}_n^k\l(\frac{(h_n^j)^{\alpha}t-t_n^k}{(h_n^k)^{\alpha}}, \frac{h_n^j}{h_n^k} (x-h_n^j\xi_n^k)\r)\r\|_{L_{t,x}^{3}}.
\end{align*}
Similarly we still have the following estimate
\[\lim_{R\to\infty}\int_{\{|s|+|y|> R\}} |\tilde{\Phi}_n^j(s,y)|^{6} \ddd y\ddd s=0.\]
By imitating the argument above, we can get the desired result \eqref{Prop_Strichartz-orthogonal profiles_1} too. Hence we turn to the case
\[\lim_{n\to\infty}\l({h_n^j}+{h_n^k}\r)|\xi_n^j-\xi_n^k|=a, \quad a<\infty.\]
Recall the construction of the linear profile decomposition and the label in \eqref{equation_linear profile decomposition_7}. Therefore, due to the conclusion \eqref{Lemma_frequency and scaling_3} in Lemma \ref{Lemma_frequency and scaling}, it remains for us to deal with the case $\beta=\gamma$ in view of the label \eqref{equation_linear profile decomposition_7}. Consequently, we can assume $(h_n^j,\xi_n^j) \equiv(h_n^k, \xi_n^k) \equiv(h_n,\xi_n)$ from now on.

Since the case $\xi_n\equiv 0$ is much easier, recalling the Remark \ref{Remark_space and time translations}, we may further assume
\begin{equation}\label{equation_profiles orthogonal_1.5}
\lim_{n\to\infty}h_n^j\xi_n^j=\infty
\end{equation}
without loss of generality. By changing the variables in $\|\Phi_n^{j,k}\|_{L_{t,x}^3}$, we turn to investigate
\begin{align*}
\tilde{\Phi}_n^{j,k}(t,x)&:= \int_{\bR} |\xi+h_n\xi_n|^{\frac{\alpha-2}{6}} e^{i\l(x+\frac{x_n^j-x_n^k}{h_n}\r)\xi-i\l(t+\frac{t_n^j-t_n^k}{(h_n)^{\alpha}}\r) |\xi+h_n\xi_n|^{\alpha}}\hat{\phi}^k(\xi) \ddd\xi \\
&\quad \times \int_{\bR} |\xi+h_n\xi_n|^{\frac{\alpha-2}{6}}  e^{ix\xi-it|\xi+h_n\xi_n|^{\alpha}}\hat{\phi}^j(\xi)\ddd\xi \\
&=: \int_{\bR} e^{i\l(x+\frac{x_n^j-x_n^k}{h_n}\r)\xi- i\Psi_{n}^{j,k,t}(\xi)} |\xi+h_n\xi_n|^{\frac{\alpha-2}{6}} \hat{\phi}^k(\xi) \ddd\xi \cdot \int_{\bR} e^{ix\xi-it\Psi_n^t(\xi)} |\xi+h_n\xi_n|^{\frac{\alpha-2}{6}}  \hat{\phi}^j(\xi)\ddd\xi\\
&=: A_n^{j,k}(t,x)\cdot B_n^j(t,x).
\end{align*}
To get the desired conclusion \eqref{Prop_Strichartz-orthogonal profiles_1}, it suffices to show that the following estimate
\begin{equation}\label{equation_profiles orthogonal_2}
\lim_{n\to\infty}\l\|A_n^{j,k}(t,x) B_n^j(t,x)\r\|_{L_{t,x}^{3}}=0
\end{equation}
holds for all $j\neq k$. Just as what we have done in Proposition \ref{Prop_dislocation property}, based on the assumption \eqref{equation_profiles orthogonal_1.5}, we can rewrite
\[\Psi_{n}^{j,k,t}(\xi)=\sum_{m=1}^{\infty} a_{n}^{m,j,k,t} (\xi)^{m}, \quad \Psi_n^{t}(\xi)=\sum_{m=1}^{\infty} a_n^{m,t} (\xi)^m.\]
Note that the differences of the coefficients
\begin{equation*}
b_n^{m,j,k}:=a_n^{m,j,k,t}-a_n^{m,t}=\binom{\alpha}{m}\frac{t_n^j-t_n^k}{(h_n)^{\alpha}}\l|h_n\xi_n\r|^{\alpha-m}
\end{equation*}
are independent of $t$ since the difference of the functions
\[\Psi_{n}^{j,k,t}(\xi)-\Psi_n^{t}(\xi)=\frac{t_n^j-t_n^k}{(h_n)^{\alpha}} |\xi+h_n\xi_n|^{\alpha}\]
is independent of $t$. Meanwhile the assumption \eqref{equation_profiles orthogonal_1.5} implies $b_n^{m+1,j,k}\ll b_n^{m,j,k}$ for $n$ large enough. Hence we have, after passing to a subsequence, the following condition
\begin{equation}\label{equation_profiles orthogonal_5}
\lim_{n\to\infty}\l(\l|\tilde{b}_n^{1,j,k}\r|+\l|b_n^{2,j,k}\r|\r)=\infty, \quad \quad \tilde{b}_n^{1,j,k}:=\frac{x_n^k-x_n^j}{h_n}+b_n^{1,j,k}
\end{equation}
due to the limit-orthogonality property \eqref{Prop_linear profile decomposition_3} in Proposition \ref{Prop_linear profile decomposition} and the conditional dislocation property Proposition \ref{Prop_dislocation property}. Again, the method of stationary phase will provide the decay estimates of $A_n^{j,k}(t,x)$ and $B_n^j(t,x)$. Combining the trivial size estimates and the oscillation estimates deduced by the classical van der Corput Lemma, we always have the following estimates
\begin{equation}\label{equation_profiles orthogonal_3}
|A_n^{j,k}(t,x)|\lesssim_{\phi^k} \min\l\{|h_n\xi_n|^{\frac{\alpha-2}{6}}, \l|t-\frac{t_n^k-t_n^j}{(h_n)^{\alpha}}\r|^{-\frac{1}{2}}\!\!\!\!|h_n\xi_n|^{-\frac{\alpha-2}{3}}\r\}
\end{equation}
and
\begin{equation}\label{equation_profiles orthogonal_4}
|B_n^j(t,x)|\lesssim_{\phi^j} \min\l\{|h_n\xi_n|^{\frac{\alpha-2}{6}}, |t|^{-\frac{1}{2}}|h_n\xi_n|^{-\frac{\alpha-2}{3}}\r\}.
\end{equation}
To get the non-stationary bounds, we decompose the spatial space into
\begin{align*}
A_t &:=\l\{x: \l|x+\frac{x_n^j-x_n^k}{h_n}-a_n^{1,j,k,t}\r|\lesssim_{\phi^k} \l|t-\frac{t_n^k-t_n^j}{(h_n)^{\alpha}}\r| |h_n\xi_n|^{\alpha-2}\r\}, \\
B_t &:=\l\{x: \l|x-a_n^{1,t}\r|\lesssim_{\phi^j} |t| |h_n\xi_n|^{\alpha-2}\r\}, \\
C_t &:=\bR\setminus (A_t\cup B_t),
\end{align*}
where the implicit constants in the definitions of $A_t$ and $B_t$ may depend on the Fourier supports of $\phi^k$ and $\phi^j$. Note that
\[A_t=\l\{\l|x-a_n^{1,t}-\tilde{b}_n^{1,j,k}\r|\lesssim_{\phi^k} \l|t-\frac{t_n^k-t_n^j}{(h_n)^{\alpha}}\r| |h_n\xi_n|^{\alpha-2}\r\}\]
by the definition of $\tilde{b}_n^{1,j,k}$. On the other hand if $x\in C_t$, we always have
\[|h_n\xi_n|^{\alpha-2} \l|t-\frac{t_n^k-t_n^j}{(h_n)^{\alpha}}\r| \Big/ \l|x+\frac{x_n^j-x_n^k}{h_n}-a_n^{1,j,k,t}\r| \lesssim_{\phi^k} 1, \quad \quad \frac{|t| |h_n\xi_n|^{\alpha-2}}{\l|x-a_n^{1,t}\r|} \lesssim_{\phi^j} 1.\]
Hence on $\bR\times C_t$, we can use the classical van der Corput Lemma to obtain the non-stationary bounds
\begin{equation}\label{equation_profiles orthogonal_4.5}
|A_n^{j,k}(t,x)|\lesssim_{\phi^k} \frac{|h_n\xi_n|^{\frac{\alpha-2}{6}}}{\l|x-a_n^{1,t}-\tilde{b}_n^{1,j,k}\r|}, \quad |B_n^j(t,x)|\lesssim_{\phi^j} \frac{|h_n\xi_n|^{\frac{\alpha-2}{6}}}{\l|x-a_n^{1,t}\r|}.
\end{equation}

Combining the estimates \eqref{equation_profiles orthogonal_3}, \eqref{equation_profiles orthogonal_4} and \eqref{equation_profiles orthogonal_4.5}, together with the condition \eqref{equation_profiles orthogonal_5}, we can get the desired result \eqref{equation_profiles orthogonal_2} by following an analogous argument in \cite[Lemma 6.1, \textbf{Case 2}]{JPS_2010}.

The details for the remaining proof are very long but essentially the same as \cite[Lemma 6.1, \textbf{Case 2}]{JPS_2010}. For the convenience of the reader and avoiding too much redundancy, we provide part of the details and the rest of the proof will be sketchy.  Split the time space into $\bR=\tau_0^{-}\cup\tau_0\cup\tilde{\tau}_n \cup\tau_n\cup \tau_n^{+}$ where
\[\tau_0:=\l[-|h_n\xi_n|^{2-\alpha}, |h_n\xi_n|^{2-\alpha}\r], \quad \tau_n:=\l[\frac{t_n^k-t_n^j}{(h_n)^{\alpha}}-|h_n\xi_n|^{2-\alpha}, \frac{t_n^k-t_n^j}{(h_n)^{\alpha}}+|h_n\xi_n|^{2-\alpha}\r],\]
and
\[\tau_0^{-}:=(-\infty,-|h_n\xi_n|^{2-\alpha}],\; \tilde{\tau}_n:=\l[|h_n\xi_n|^{2-\alpha}, \frac{t_n^k-t_n^j}{(h_n)^{\alpha}}-|h_n\xi_n|^{2-\alpha}\r],\; \tau_n^{+}:=\l[\frac{t_n^k-t_n^j}{(h_n)^{\alpha}}+|h_n\xi_n|^{2-\alpha},\infty\r)\]
For simplicity we use the notation $I(\tau_0,A_t)$ to denote the integral of $|A_n^{j,k}B_n^j|^3$ on the domain $\tau_0\times A_t$. In other words, we define
\[I(\tau_0,A_t):=\int_{t\in \tau_0}\int_{x\in A_t}|A_n^{j,k}B_n^j|^3 \ddd x\ddd t.\]
Similarly for the notations $I(\tau_n, B_t)$, $I(\tau_n^{+}, C_t)$ and so on. Taking \eqref{equation_profiles orthogonal_5} into consideration, the rest of this proof is divided into two parts.

\textbf{Case A:} If there holds
\begin{equation}\label{equation_profiles orthogonal_6}
\lim_{n\to\infty}|b_n^{2,j,k}|=\lim_{n\to\infty}\l|\frac{t_n^k-t_n^j}{(h_n)^{\alpha}}(h_n\xi_n)^{\alpha-2}\r|=\infty,
\end{equation}
which means that $\frac{|t_n^j-t_n^k|}{(h_n)^{\alpha}}\gg |h_n\xi_n|^{-(\alpha-2)}$ for $n$ large enough. Then the desired result comes from a similar process in the proof of \cite[Lemma 6.1, \textbf{Case 2aI}]{JPS_2010}. By taking a subsequence, together with the symmetry of positive and negative cases, we may assume that $\frac{t_n^k-t_n^j}{(h_n)^{\alpha}}>0$ without loss of generality. From the estimates \eqref{equation_profiles orthogonal_3} and \eqref{equation_profiles orthogonal_4}, it is not hard to show that
\[I(\tau_0, B_t)\lesssim \l|\frac{t_n^k-t_n^j}{(h_n)^{\alpha}}(h_n\xi_n)^{\alpha-2}\r|^{-\frac{3}{2}}, \quad I(\tau_n, B_t)\lesssim \l|\frac{t_n^k-t_n^j}{(h_n)^{\alpha}}(h_n\xi_n)^{\alpha-2}\r|^{-\frac{1}{2}}.\]
And meanwhile some computation gives
\[I(\tau_0^{-}, B_t)\lesssim \l|\frac{t_n^k-t_n^j}{(h_n)^{\alpha}}(h_n\xi_n)^{\alpha-2}\r|^{-\frac{1}{2}}, \quad I(\tau_n^{+},B_t)\lesssim \l|\frac{t_n^k-t_n^j}{(h_n)^{\alpha}}(h_n\xi_n)^{\alpha-2}\r|^{-\frac{1}{2}}.\]
For the term $I(\tilde{\tau}_n, B_t)$, recall the indefinite integral
\[\int t^{-1/2}(a-t)^{-3/2} \ddd t= \frac{2\sqrt{t}}{a\sqrt{a-t}}+C.\]
Therefore by the condition $\frac{|t_n^j-t_n^k|}{(h_n)^{\alpha}}\gg |h_n\xi_n|^{-(\alpha-2)}$ we can obtain
\[I(\tilde{\tau}_n, B_t)\lesssim |h_n\xi_n|^{-(\alpha-2)}\frac{2\sqrt{(t_n^j-t_n^k)/(h_n)^{\alpha}-(h_n\xi_n)^{-\alpha-2}}}{(t_n^j-t_n^k)/(h_n)^{\alpha}\sqrt{(h_n\xi_n)^{-(\alpha-2)}}} \lesssim \l|\frac{t_n^k-t_n^j}{(h_n)^{\alpha}}(h_n\xi_n)^{\alpha-2}\r|^{-\frac{1}{2}}.\]
Hence we conclude that $I(\bR, B_t)\to0$ as $n\to\infty$ due to the condition \eqref{equation_profiles orthogonal_6}. Analogous arguments give the estimate $I(\bR, A_t)\to0$ as $n\to\infty$. Finally, by combining the non-stationary bounds \eqref{equation_profiles orthogonal_4.5} and further splitting $\tilde{\tau}_n$ into
\[\tilde{\tau}_n:=\l[|h_n\xi_n|^{2-\alpha}, \frac{t_n^k-t_n^j}{2(h_n)^{\alpha}}\r] \bigcup \l[\frac{t_n^k-t_n^j}{2(h_n)^{\alpha}}, \frac{t_n^k-t_n^j}{(h_n)^{\alpha}}-|h_n\xi_n|^{2-\alpha}\r],\]
we can obtain the estimate $I(\bR, C_t)\to0$ as $n\to\infty$ and finish the proof of this case.

\textbf{Case B:} If $|b_n^{2,j,k}|\leq C_0$ for some fixed $C_0>0$ and
\[\lim_{n\to\infty}\l|\tilde{b}_n^{1,j,k}\r|=\lim_{n\to\infty} \l|\frac{x_n^j-x_n^k-\alpha(t_n^j-t_n^k)(\xi_n)^{\alpha-1}}{h_n}\r|=\infty.\]
Analogously the desired result comes from a similar process in the proof of \cite[Lemma 6.1, \textbf{Case 2aII}]{JPS_2010}. We may assume $\tilde{b}_n^{1,j,k}>0$ at first. In this case, the corresponding decomposition for the time space is $\bR:=\dot{\tau}_K^{+}\cup\dot{\tau}_K^{-}$ for a large constant $K\gg C_0$, where
\[\dot{\tau}_K^{+}:=\l\{t: (h_n\xi_n)^{\alpha-2}|t|\geq K\r\}.\]
Note that on $\dot{\tau}_K^{+}$ it holds $|(t_n^k-t_n^j)/(h_n)^{\alpha}|\ll |t|$ and on $\dot{\tau}_K^{-}\times B_t$ it holds
\[|x-\alpha (h_n\xi_n)^{\alpha-1}|\ll \l|\tilde{b}_n^{1,j,k}\r|\]
for $n$ large enough. These two facts together with the stationary bounds \eqref{equation_profiles orthogonal_3} and \eqref{equation_profiles orthogonal_4} imply
\[I(\dot{\tau}_K^{+}, B_t)\lesssim K^{-1}, \quad I(\dot{\tau}_K^{-}, B_t)\lesssim K^{1/2}\l|\tilde{b}_n^{1,j,k}\r|^{-3}.\]
The first estimate for $I(\dot{\tau}_K^{+}, B_t)$ is uniform in all large $n$ and is going to zero as $K$ goes to infinity; while the second estimate for $I(\dot{\tau}_K^{-}, B_t)$ is going to zero as $n$ goes to infinity. Thereby, after a similar argument for $I(\bR, A_t)$, we can obtain the following two estimates
\[\lim_{n\to\infty}I(\bR, A_t)=0, \quad \lim_{n\to\infty} I(\bR, B_t)=0.\]
For the term $I(\bR, C_t)$, we should use the non-stationary bounds \eqref{equation_profiles orthogonal_4.5} too. The result $I(\dot{\tau}_K^{+}, C_t)\lesssim K^{-1}$ is not hard to obtain. Hence it remains for us to estimate $I(\dot{\tau}_K^{-}, C_t)$. Just as what we have done in \textbf{Case A}, for $t\in \dot{\tau}_K^{-}$ if we split $C_t$ further into $C_t=B_t^{-}\cup B_t^{+}\cup A_t^{-}\cup A_t^{+}$ for $n$ large enough where
\[B_t^{-}:=\l(-\infty, a_n^{1,t}-|t||h_n\xi_n|^{\alpha-2}\r], \quad B_t^{+}:=\l[a_n^{1,t}+|t||h_n\xi_n|^{\alpha-2}, a_n^{1,t}+\frac{\tilde{b}_n^{1,j,k}}{2}\r],\]
and
\[A_t^{-}:=\l[a_n^{1,t}+\frac{\tilde{b}_n^{1,j,k}}{2}, a_n^{1,t}+\tilde{b}_n^{1,j,k}-\l|t-\frac{t_n^k-t_n^j}{(h_n)^{\alpha}}\r||h_n\xi_n|^{\alpha-2}\r],\]
with
\[A_t^{+}:=\l[ a_n^{1,t}+\tilde{b}_n^{1,j,k}+\l|t-\frac{t_n^k-t_n^j}{(h_n)^{\alpha}}\r||h_n\xi_n|^{\alpha-2}, +\infty\r),\]
then we can get the estimate $I(\bR,C_t)\to 0$ as $n\to\infty$ by estimating the integral piece by piece. Therefore, we conclude the desired result \eqref{equation_profiles orthogonal_2} and finish the proof.
\end{proof}

\section{Extremals for symmetric alpha-Strichartz estimate}\label{Sec_Symmetric alpha-Strichartz extremals}
With the linear profile decomposition Proposition \ref{Prop_linear profile decomposition}, the Strichartz-orthogonality of profiles Proposition \ref{Prop_Strichartz-orthogonal profiles} and the following asymptotic \schrodinger behavior Lemma \ref{Lemma_asymptotic schrodinger} in place, we are ready to give the proof of the desired extremal result Theorem \ref{Thm_extremals-symmetric} for symmetric $\alpha$-Strichartz estimate. This arguments can be directly used in asymmetric cases, which will be shown in Section \ref{Sec_Asymmetric alpha-Strichartz} later.

\begin{lemma}[Asymptotic \schrodinger behavior]\label{Lemma_asymptotic schrodinger}
If $\|\phi\|_{L_x^2(\bR)}=1$ and $\lim_{n\to\infty}|\xi_n|=\infty$, then we have
\begin{equation}\label{Lemma_asymptotic schrodinger_1}
\lim_{n\to\infty}\l\|[D^{\frac{\alpha-2}{6}}][e^{it|\nabla|^{\alpha}}][e^{i(\cdot)\xi_n}\phi]\r\|_{L_{t,x}^{6}(\bR^{2})}
=\l(\frac{\alpha^2-\alpha}{2}\r)^{-\frac{1}{6}}\l\|[e^{-it\Delta}]\phi\r\|_{L_{t,x}^{6}(\bR^{2})}.
\end{equation}
\end{lemma}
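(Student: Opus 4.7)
\smallskip

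\noindent\textbf{Proof plan for Lemma~\ref{Lemma_asymptotic schrodinger}.} The guiding idea is that when we modulate $\phi$ by a high frequency $\xi_n$, the Fourier transform $\widehat{e^{i(\cdot)\xi_n}\phi}$ is concentrated near $\xi_n$, so a Taylor expansion of the symbol $|\xi|^{\alpha}$ around $\xi_n$ makes the $\alpha$-propagator behave asymptotically like a Schr\"odinger propagator, with the quadratic coefficient $\tfrac{\alpha(\alpha-1)}{2}|\xi_n|^{\alpha-2}$ dictating the time rescaling. First I would reduce to a Schwartz function $\phi$ whose Fourier transform $\hat\phi$ has compact support, contained in a fixed set $\{|\eta|\le M\}$, by a standard density argument using the continuity of both sides of \eqref{Lemma_asymptotic schrodinger_1} in $L^2(\bR)$ (for the left-hand side this is the $\alpha$-Strichartz inequality~\eqref{equation_symmetric alpha-Strichartz}; for the right-hand side it is the classical Strichartz inequality). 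By the symmetry $\xi_n\mapsto -\xi_n$ (which conjugates the whole expression after a sign change of the variable) I may assume $\xi_n>0$.

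Writing $\Phi_n(t,x):=[D^{\frac{\alpha-2}{6}}][e^{it|\nabla|^{\alpha}}][e^{i(\cdot)\xi_n}\phi](x)$ and substituting $\xi=\xi_n+\eta$ in the Fourier inversion formula, one obtains
\begin{equation*}
\Phi_n(t,x)=\frac{e^{ix\xi_n-it\xi_n^{\alpha}}}{2\pi}\int_{\bR}e^{i(x-\alpha\xi_n^{\alpha-1}t)\eta\,-\,it\cdot\tfrac{\alpha(\alpha-1)}{2}\xi_n^{\alpha-2}\eta^{2}\,-\,itR_n(\eta)}\,|\xi_n+\eta|^{\frac{\alpha-2}{6}}\hat\phi(\eta)\,\ddd\eta,
\end{equation*}
where $R_n(\eta)=|\xi_n+\eta|^{\alpha}-\xi_n^{\alpha}-\alpha\xi_n^{\alpha-1}\eta-\tfrac{\alpha(\alpha-1)}{2}\xi_n^{\alpha-2}\eta^{2}=O(\xi_n^{\alpha-3}|\eta|^{3})$ uniformly for $|\eta|\le M$. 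Next I introduce the change of variables
\begin{equation*}
s:=\frac{\alpha(\alpha-1)}{2}\xi_n^{\alpha-2}\,t,\qquad y:=x-\alpha\xi_n^{\alpha-1}t,
\end{equation*}
whose Jacobian is $\tfrac{\alpha(\alpha-1)}{2}\xi_n^{\alpha-2}$. The phase in the integral becomes $y\eta-s\eta^{2}-itR_n(\eta)$, and $|t|\,\xi_n^{\alpha-3}=\tfrac{2}{\alpha(\alpha-1)}|s|\,\xi_n^{-1}$, so $tR_n(\eta)\to 0$ uniformly for $(s,\eta)$ in any bounded set. Likewise $|\xi_n+\eta|^{(\alpha-2)/6}=\xi_n^{(\alpha-2)/6}(1+O(\xi_n^{-1}))$ uniformly for $\eta\in\supp\hat\phi$. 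Denoting $\tilde\Phi_n(s,y):=\xi_n^{-(\alpha-2)/6}|\Phi_n(t,x)|$, dominated convergence (over $\eta$) gives the pointwise limit
\begin{equation*}
\tilde\Phi_n(s,y)\longrightarrow\Big|[e^{-is\Delta}]\phi(y)\Big|\qquad\text{as } n\to\infty.
\end{equation*}

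Finally I transfer this pointwise convergence to $L^{6}$. Combining the Jacobian and the prefactor $\xi_n^{(\alpha-2)/6}$ gives the exact cancellation
\begin{equation*}
\|\Phi_n\|_{L^{6}_{t,x}}^{6}=\xi_n^{\alpha-2}\cdot\frac{2}{\alpha(\alpha-1)}\,\xi_n^{2-\alpha}\int_{\bR^2}\tilde\Phi_n(s,y)^{6}\,\ddd s\,\ddd y=\frac{2}{\alpha(\alpha-1)}\int_{\bR^2}\tilde\Phi_n(s,y)^{6}\,\ddd s\,\ddd y.
\end{equation*}
Passing to the limit then yields $\|\Phi_n\|_{L^{6}_{t,x}}^{6}\to\bigl(\tfrac{\alpha^2-\alpha}{2}\bigr)^{-1}\|[e^{-it\Delta}]\phi\|_{L^{6}_{t,x}}^{6}$, which is the desired identity after taking sixth roots. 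The main obstacle is justifying the passage from pointwise to $L^{6}$ convergence: on bounded sets of $(s,y)$ the uniform convergence above is enough, but for the tails I would produce a uniform-in-$n$ dominating function by exploiting the Strichartz estimate \eqref{equation_symmetric alpha-Strichartz} together with an explicit stationary-phase/van der Corput decay bound for $\tilde\Phi_n(s,y)$ coming from the same Taylor expansion (these are the bounds of the type \eqref{equation_profiles orthogonal_3}--\eqref{equation_profiles orthogonal_4} used in the proof of Proposition~\ref{Prop_Strichartz-orthogonal profiles}), so that the dominated convergence theorem applies globally.
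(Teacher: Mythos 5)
Your proposal is correct and follows essentially the same route as the paper: a reduction to compactly Fourier-supported Schwartz data, the substitution $\xi=\xi_n+\eta$ with a Taylor (binomial-series) expansion of $|\xi_n+\eta|^{\alpha}$, the rescaling $s=\tfrac{\alpha(\alpha-1)}{2}\xi_n^{\alpha-2}t$, $y=x-\alpha\xi_n^{\alpha-1}t$ whose Jacobian cancels the weight $\xi_n^{(\alpha-2)/6}$, and dominated convergence with a uniform-in-$n$ dominating function supplied by the van der Corput Lemma. The paper's own proof is just a terse version of this (it records the explicit dominating function $F(t,x)$ and defers details to \cite[Proposition 7.1]{JPS_2010} and \cite[Remark 1.7]{Shao_2009}), so your more explicit write-up, including the honest flagging of the dominating-function step as the remaining technical point, is entirely consistent with it.
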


\begin{proof}[\textbf{Proof of Lemma \ref{Lemma_asymptotic schrodinger}}]
This is a standard consequence by changing of variables and the dominated convergence theorem. While the dominating function, which can be chosen as
\begin{equation*}
F(t,x):=\l\{\begin{array}{cc}
C_{\phi}[(1+|t|)(1+|x|)]^{-1/4}, & |x|\lesssim_{\phi} |t|;\\
C_{\phi}[(1+|t|)(1+|x|)]^{-1/2}, & |x|\gtrsim_{\phi} |t|,
\end{array}\r.
\end{equation*}
also comes from the classical van der Corput Lemma. The readers can see \cite[Proposition 7.1]{JPS_2010} or \cite[Remark 1.7]{Shao_2009} for more details about the very similar arguments in other contexts. This idea can also be seen in some earlier papers \cite{CCT_2003} and \cite{Tao_2007}. Note that the assumption $\lim_{n\to\infty}|\xi_n|=\infty$ can guarantee the uniform convergency when we use a series which is similar to \eqref{equation_dislocations_5}. We omit the proof here.
\end{proof}

\begin{remark}\label{Remark_asymptotic schrodinger}
Based on the existence of extremals for $\mathbf{M}_2$, Lemma \ref{Lemma_asymptotic schrodinger} implies
\[\mathbf{M}_{\alpha}\geq \l(\frac{\alpha^2-\alpha}{2}\r)^{-\frac{1}{6}}\mathbf{M}_2.\]
Meanwhile we pointed out that the non-precompactness, in view of Theorem \ref{Thm_extremals-symmetric}, is different from the non-existence of extremals. Indeed when $\alpha=2$, Foschi \cite{Foschi_2007} and Hundertmark-Zharnitsky \cite{HZ_2006} independently show that the sharp constant $\mathbf{M}_2=12^{-12}$ and the only extremals are Gaussians up to symmetries. Based on this result, Lemma \ref{Lemma_asymptotic schrodinger} implies
\[\mathbf{M}_{\alpha}\geq [\sqrt{3}\alpha(\alpha-1)]^{-\frac{1}{6}}.\]
\end{remark}

\begin{proof}[\textbf{Proof of Theorem \ref{Thm_extremals-symmetric}}]
Let $\{u_n\}_{n\geq1}$ be an extremal sequence for $\mathbf{M}_{\alpha}$. Then, up to subsequences, by the profile decomposition Proposition \ref{Prop_linear profile decomposition} we can decompose $u_n$ into linear profiles as
\[u_n=\sum_{j=1}^{J} [T_n^j]\phi^j+\omega_n^{J}.\]
Due to the vanishing Strichartz norm estimate \eqref{Prop_linear profile decomposition_2} for the remainder term in Proposition \ref{Prop_linear profile decomposition}, we obtain that for arbitrary $\epsilon>0$, there exists $N_{\epsilon}$ such that for all $N\geq N_{\epsilon}$ and all $n\geq N_{\epsilon}$,
\[\mathbf{M}_{\alpha}-\epsilon\leq \l\| D^{\frac{\alpha-2}{6}}\sum_{j=1}^{N} [e^{it|\nabla|^{\alpha}}][T_n^j]\phi^j\r\|_{L_{t,x}^{6}}.\]
Hence the Strichartz-orthogonality of profiles Proposition \ref{Prop_Strichartz-orthogonal profiles} gives the following
\[\mathbf{M}_{\alpha}^6-C_{\alpha}\epsilon\leq \sum_{j=1}^{N}\l\| [D^{\frac{\alpha-2}{6}}][e^{it|\nabla|^{\alpha}}][T_n^j]\phi^j\r\|^6_{L_{t,x}^6}.\]
Take $j_0$ by
\[\l\|[D^{\frac{\alpha-2}{6}}][e^{it|\nabla|^{\alpha}}][T_n^{j_0}]\phi^{j_0}\r\|_{L_{t,x}^6}=\max\l\{\l\| [D^{\frac{\alpha-2}{6}}][e^{it|\nabla|^{\alpha}}][T_n^j]\phi^j\r\|_{L_{t,x}^6}: 1\leq j\leq N_0\r\}.\]
The $\alpha$-Strichartz estimate \eqref{equation_symmetric alpha-Strichartz} implies
\begin{align}
\mathbf{M}_{\alpha}^6-C_{\alpha}\epsilon &\leq \sum_{j=1}^{N}\l\| [D^{\frac{\alpha-2}{6}}] [e^{it|\nabla|^{\alpha}}][T_n^j]\phi^j \r\|^{6}_{L_{t,x}^{6}}\notag \\
&\leq \mathbf{M}_{\alpha}^6\sum_{j=1}^{N} \|\phi^j\|_{L_x^2}^6 \leq \mathbf{M}_{\alpha}^6\l(\sum_{j=1}^{N} \|\phi^j\|_{L_x^2}^2\r)^3
\leq \mathbf{M}_{\alpha}^6, \label{equation_extremals_2}
\end{align}
where the last inequality comes from the fact that the $L^2$-orthogonal identity \eqref{Prop_linear profile decomposition_4} which leads to
\begin{equation}\label{equation_extremals_3}
\sum_{j=1}^{\infty}\|\phi^j\|_{L_x^2}^2\leq \lim_{n\to\infty}\|u_n\|^2_{L_x^2}=1.
\end{equation}
This fact also deduces $\lim_{j\to\infty}\|\phi^j\|_{L_x^2}=0$. Consequently, we can choose $j_0$ independent of $\epsilon$. Hence we may let $\epsilon\to 0$ in \eqref{equation_extremals_2} to obtain
\[\|\phi^{j_0}\|_{L_x^2}=1,\]
which means $\phi^j=0$ for all $j\neq j_0$ due to the inequality \eqref{equation_extremals_3}. Therefore by the linear profile decomposition we conclude
\[\lim_{n\to\infty}\l\| [D^{\frac{\alpha-2}{6}}] [e^{it|\nabla|^{\alpha}}][T_n^{j_0}]\phi^{j_0} \r\|_{L_{t,x}^{6}}=\lim_{n\to\infty}\l\| [D^{\frac{\alpha-2}{6}}][e^{it|\nabla|^{\alpha}}][e^{i(\cdot)h_n^{j_0}\xi_n^{j_0}}]\phi^{j_0} \r\|_{L_{t,x}^{6}}=\mathbf{M}_{\alpha}.\]
This suggests that $e^{ixh_n^{j_0}\xi_n^{j_0}}\phi^{j_0}(x)\in L_x^2$ is an extremal sequence where either $\lim_{n\to\infty}|h_n^{j_0}\xi_n^{j_0}| \to\infty$ or $h_n^{j_0}\xi_n^{j_0}\equiv0$. For the case $h_n^{j_0}\xi_n^{j_0}\equiv0$, we get the desired extremal function $\phi^{j_0}$. However for the case $|h_n^{j_0}\xi_n^{j_0}|\to\infty$, we should do more investigation as follows. Indeed as we will see, Lemma \ref{Lemma_asymptotic schrodinger} gives the desired conclusion and finishes the proof.

If we have the strict inequality \eqref{Thm_extremals-symmetric_1}, then the case $|h_n^{j_0}\xi_n^{j_0}|\to\infty$ is ruled out by Lemma \ref{Lemma_asymptotic schrodinger} and hence all the extremal sequences for $\mathbf{M}_{\alpha}$ are precompact up to symmetries. On the other hand if
\[\mathbf{M}_{\alpha}=[\sqrt{3}\alpha(\alpha-1)]^{-\frac{1}{6}},\]
then, after normalizing, $\tilde{u}_n(x):=\sqrt{n}e^{in^2x}e^{-|n(x-x_0)|^2}$ will give an extremal sequence that is not precompact up to symmetries, since $\tilde{u}_n$ goes to zero up to symmetries in the weak topology of $L^2(\bR)$. Meanwhile, it is easy to check that $(\tilde{u}_n)$ concentrates at $x_0$.
\end{proof}

\section{Extremals for asymmetric alpha-Strichartz estimate} \label{Sec_Asymmetric alpha-Strichartz}
As what we have stated before, our method can produce some extremal results for the asymmetric $\alpha$-Strichartz estimates \eqref{equation_asymmetric alpha-Strichartz} as well. This mainly because that the profile decomposition Proposition \ref{Prop_linear profile decomposition} is simultaneously equipped with the Strichartz-orthogonality Proposition \ref{Prop_Strichartz-orthogonal profiles} for all the profiles. By imitating the arguments in the proof of Theorem \ref{Thm_extremals-symmetric}, it not hard to see that the desired asymmetric $\alpha$-Strichartz result Theorem \ref{Thm_extremals asymmetric} is a standard consequence of the following two lemmas which are generalizations of the estimates \eqref{Prop_Strichartz-orthogonal profiles_2} and \eqref{Lemma_asymptotic schrodinger_1} respectively.

\begin{lemma}\label{Lemma_asymmetric orthogonal}
Let $(q,r)$ be non-endpoint pairs and $\tilde{N}\geq1$. For the profiles in Proposition \ref{Prop_linear profile decomposition}, if $q\geq r$ then
\begin{equation*}
\lim_{n\to\infty} \l\|\sum_{j=1}^{\tilde{N}}[D^{\frac{\alpha-2}{q}}][e^{it|\nabla|^{\alpha}}][T_n^j]\phi^j\r\|_{L_t^q L_x^r}^r \leq \sum_{j=1}^{\tilde{N}}\lim_{n\to\infty} \l\|[D^{\frac{\alpha-2}{q}}][e^{it|\nabla|^{\alpha}}][T_n^j]\phi^j\r\|_{L_t^q L_x^r}^r;
\end{equation*}
meanwhile if $q\leq r$ then
\begin{equation*}
\lim_{n\to\infty} \l\|\sum_{j=1}^{\tilde{N}}[D^{\frac{\alpha-2}{q}}][e^{it|\nabla|^{\alpha}}][T_n^j]\phi^j\r\|_{L_t^q L_x^r}^q \leq \sum_{j=1}^{\tilde{N}}\lim_{n\to\infty} \l\|[D^{\frac{\alpha-2}{q}}][e^{it|\nabla|^{\alpha}}][T_n^j]\phi^j\r\|_{L_t^q L_x^r}^q.
\end{equation*}
\end{lemma}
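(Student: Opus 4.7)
The plan is to reduce Lemma~\ref{Lemma_asymmetric orthogonal} to an asymmetric analog of the product orthogonality \eqref{Prop_Strichartz-orthogonal profiles_1}, namely
\begin{equation*}
\lim_{n\to\infty}\l\|F_n^j F_n^k\r\|_{L^{q/2}_t L^{r/2}_x}=0\quad\text{for }j\neq k,\qquad F_n^j:=[D^{\frac{\alpha-2}{q}}][e^{it|\nabla|^{\alpha}}][T_n^j]\phi^j.
\end{equation*}
I intend to establish this by rerunning the stationary-phase analysis carried out in the proof of Proposition~\ref{Prop_Strichartz-orthogonal profiles} with the mixed norm $L^{q/2}_t L^{r/2}_x$ in place of $L^{3}_{t,x}$. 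The non-endpoint restriction gives $q/2>2$ and $r/2>1$, so the pointwise decay bounds \eqref{equation_profiles orthogonal_3}, \eqref{equation_profiles orthogonal_4}, and \eqref{equation_profiles orthogonal_4.5} remain available; and since the asymmetric Strichartz norm $L^q_t L^r_x$ is invariant under the same scaling as $L^{6}_{t,x}$, the time-axis decomposition into $\tau_0$, $\tilde\tau_n$, $\tau_n$ and the case-by-case treatment should survive with only cosmetic exponent bookkeeping.

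Granted this orthogonality, the lemma will follow from a Brezis--Lieb-type elementary inequality. For $r\geq 1$ and complex numbers $a,b$ one has
\begin{equation*}
\bigl||a+b|^r-|a|^r-|b|^r\bigr|\leq C_r\bigl(|a|^{r-1}|b|+|a||b|^{r-1}\bigr),
\end{equation*}
and iterating this over the $\tilde{N}$ profiles yields, pointwise in $t$,
\begin{equation*}
\int_{\bR}\Big|\sum_{j=1}^{\tilde{N}} F_n^j(t,x)\Big|^{r}\ddd x\leq \sum_{j=1}^{\tilde{N}}\|F_n^j(t,\cdot)\|_{L^r_x}^{r}+C_{r,\tilde{N}}\sum_{j\neq k}\int_{\bR}|F_n^j|^{r-1}|F_n^k|\,\ddd x.
\end{equation*}
The crucial observation is that the cross factors regroup as $|F_n^j|^{r-1}|F_n^k|=|F_n^j|^{r-2}\cdot|F_n^j F_n^k|$, so a Hölder inequality in $x$ with conjugate exponents $r/(r-2)$ and $r/2$ (both $>1$ since $r>2$) gives $\int|F_n^j|^{r-1}|F_n^k|\,\ddd x\leq \|F_n^j(t,\cdot)\|_{L^r_x}^{r-2}\,\|F_n^jF_n^k(t,\cdot)\|_{L^{r/2}_x}$, exposing the mixed-norm product structure that interfaces with the first step.

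For the case $q\geq r$, I take $L^{q/r}_t$-norms of the preceding inequality (well-defined because $q/r\geq 1$) and apply the triangle inequality. The diagonal contribution produces exactly $\sum_{j=1}^{\tilde{N}}\|F_n^j\|^r_{L^q_t L^r_x}$, while each cross term, by a second Hölder in $t$ with conjugate exponents $q/(r-2)$ and $q/2$ (compatible since $r/q=(r-2)/q+2/q$ and $q\geq r>r-2$ guarantees $q/(r-2)\geq 1$), is bounded by $\|F_n^j\|_{L^q_t L^r_x}^{r-2}\,\|F_n^jF_n^k\|_{L^{q/2}_t L^{r/2}_x}$; the first factor is uniformly controlled by the asymmetric $\alpha$-Strichartz estimate \eqref{equation_asymmetric alpha-Strichartz} and the second vanishes by the first step. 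For the case $q\leq r$, I instead raise the pointwise-in-$t$ inequality to the power $q/r\leq 1$, use the subadditivity $(a+b)^{q/r}\leq a^{q/r}+b^{q/r}$, integrate in $t$, and apply Hölder in $t$ with conjugate exponents $r/(r-2)$ and $r/2$ (both $>1$) to arrive at the analogous bound $\|F_n^j\|_{L^q_t L^r_x}^{q(r-2)/r}\,\|F_n^jF_n^k\|_{L^{q/2}_t L^{r/2}_x}^{q/r}$, which likewise vanishes as $n\to\infty$.

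The principal obstacle is the first step: rerunning the long case analysis of Proposition~\ref{Prop_Strichartz-orthogonal profiles} (the parameter cases $h_n^j/h_n^k\to\infty$, $(h_n^j+h_n^k)|\xi_n^j-\xi_n^k|\to\infty$, and \textbf{Case A}/\textbf{Case B} for $h_n\xi_n\to\infty$) with the mixed norm $L^{q/2}_t L^{r/2}_x$ replacing $L^3_{t,x}$. The delicate point is that several sub-integrals bounding $I(\tau_0,B_t)$, $I(\tilde\tau_n,B_t)$, and $I(\bR,C_t)$ are tied to the $3$-power integration exponent; they must be re-balanced to the $(q/2,r/2)$ exponents, and the threshold scale $|h_n\xi_n|^{2-\alpha}$ defining $\tau_0$ and $\tau_n$ may need to be chosen differently to equalize size versus dispersive bounds. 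Since the non-endpoint hypothesis keeps both mixed-norm exponents strictly larger than $1$, I expect the stationary-phase decay from \eqref{equation_profiles orthogonal_3}--\eqref{equation_profiles orthogonal_4.5} to remain sufficient to close the argument after careful exponent bookkeeping.
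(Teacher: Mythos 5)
Your second step --- the iterated elementary inequality $\bigl||a+b|^r-|a|^r-|b|^r\bigr|\le C_r(|a|^{r-1}|b|+|a||b|^{r-1})$, the regrouping $|F_n^j|^{r-1}|F_n^k|=|F_n^j|^{r-2}\,|F_n^jF_n^k|$ (legitimate because every non-endpoint pair has $r>2$ strictly), and the two Hölder applications --- is correct; I checked the exponent arithmetic in both the $q\ge r$ and $q\le r$ cases, and the diagonal terms and cross terms come out exactly as you claim. This part is in the same spirit as the bookkeeping the paper delegates to \cite[Lemma 1.6]{Shao_2009_EJDE}.

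The genuine gap is your first step, which is where essentially all of the analytic content sits. You do not prove the mixed-norm bilinear orthogonality $\|F_n^jF_n^k\|_{L^{q/2}_tL^{r/2}_x}\to0$; you assert that the Case A / Case B analysis of Proposition \ref{Prop_Strichartz-orthogonal profiles} survives "with only cosmetic exponent bookkeeping," and that is optimistic. The bound on $I(\tilde{\tau}_n,B_t)$ there rests on the specific antiderivative $\int t^{-1/2}(a-t)^{-3/2}\,\ddd t$, and the splitting scales $|h_n\xi_n|^{2-\alpha}$ and $\dot{\tau}_K^{\pm}$ are tuned to the exponent $3$; once $q\ne r$ the inner $x$-integration over $A_t,B_t,C_t$ and the outer $t$-integration no longer share an exponent, so every threshold must be re-derived and it is not evident in advance that the stationary and non-stationary bounds still balance. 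The paper avoids this entirely: it treats the lemma as a corollary of the already-proven diagonal orthogonality \eqref{Prop_Strichartz-orthogonal profiles_1}, obtaining the vanishing of the bilinear quantity in the mixed norm by interpolation together with the floor-function expansion of \cite[Lemma 1.6]{Shao_2009_EJDE}. Concretely, since $(2/q,2/r)$ is a convex combination of $(1/3,1/3)$ and $(2/a,2/b)$ for a suitable admissible pair $(a,b)$, one bounds $\|F_n^jF_n^k\|_{L^{q/2}_tL^{r/2}_x}$ by an interpolation of $\|\cdot\|_{L^3_{t,x}}$ (which vanishes by \eqref{Prop_Strichartz-orthogonal profiles_1}) against $\|\cdot\|_{L^{a/2}_tL^{b/2}_x}$ (uniformly bounded via Cauchy--Schwarz and \eqref{equation_asymmetric alpha-Strichartz}); one must also reconcile the mismatched weights $[D^{\frac{\alpha-2}{q}}]$, $[D^{\frac{\alpha-2}{6}}]$, $[D^{\frac{\alpha-2}{a}}]$, e.g.\ by a density and frequency-localization argument, a point your proposal sidesteps by working with the $q$-weight throughout but which then prevents you from quoting \eqref{Prop_Strichartz-orthogonal profiles_1} directly. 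Either route requires work you have not supplied, so as written the first step is a program rather than a proof.
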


\begin{lemma}\label{Lemma_asymmetric asymptotic schrodinger}
If $\|\phi\|_{L_x^2(\bR)}=1$ and $\lim_{n\to\infty}|\xi_n|=\infty$, then we have
\begin{equation*}
\lim_{n\to\infty}\l\|[D^{\frac{\alpha-2}{q}}][e^{it|\nabla|^{\alpha}}][e^{i(\cdot)\xi_n}\phi]\r\|_{L_t^q L_x^r}
=\l(\frac{\alpha^2-\alpha}{2}\r)^{-\frac{1}{q}}\l\|[e^{-it\Delta}]\phi\r\|_{L_t^q L_x^r}.
\end{equation*}
\end{lemma}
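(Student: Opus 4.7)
The plan is to mimic the proof of Lemma~\ref{Lemma_asymptotic schrodinger} but keep track of the mixed norm $L_t^q L_x^r$ in place of $L^6_{t,x}$. By density I would first reduce to the case where $\phi$ is a Schwartz function with $\hat\phi$ compactly supported; then, expanding the operator on the Fourier side and substituting $\eta = \xi_n + \zeta$, the target object becomes an oscillatory integral
\[
f_n(t,x) := [D^{\frac{\alpha-2}{q}}][e^{it|\nabla|^\alpha}][e^{i(\cdot)\xi_n}\phi](x) = \frac{1}{2\pi}\int_{\bR} e^{ix(\xi_n+\zeta)-it|\xi_n+\zeta|^{\alpha}} |\xi_n+\zeta|^{\frac{\alpha-2}{q}} \hat\phi(\zeta)\,\ddd\zeta.
\]

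The next step would be to Taylor-expand $|\xi_n+\zeta|^\alpha$ and $|\xi_n+\zeta|^{(\alpha-2)/q}$ about $\zeta = 0$, assuming $\xi_n > 0$ for $n$ large (after passing to a subsequence). Writing
\[
|\xi_n+\zeta|^\alpha = \xi_n^\alpha + \alpha\xi_n^{\alpha-1}\zeta + \tfrac{\alpha(\alpha-1)}{2}\xi_n^{\alpha-2}\zeta^2 + R_n(\zeta)
\]
with $R_n(\zeta) = O(\xi_n^{\alpha-3})$ uniformly on $\supp(\hat\phi)$, and introducing the Galilean-type coordinates $s := \tfrac{\alpha(\alpha-1)}{2}\xi_n^{\alpha-2}t$, $y := x - \alpha\xi_n^{\alpha-1}t$, I would peel off the unit-modulus phase $e^{ix\xi_n - it\xi_n^\alpha}$ together with the magnitude $\xi_n^{(\alpha-2)/q}$ so that $|f_n(t,x)| = \xi_n^{(\alpha-2)/q}\,|G_n(s,y)|$ with
\[
G_n(s,y) = \frac{1}{2\pi}\int_{\bR} e^{iy\zeta - is\zeta^2 - itR_n(\zeta)}(1+\varepsilon_n(\zeta))\,\hat\phi(\zeta)\,\ddd\zeta,
\]
where $\varepsilon_n(\zeta)$ and $tR_n(\zeta) = O(|s|\,\xi_n^{-1})$ tend to $0$ uniformly on $\supp(\hat\phi)$. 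In particular $G_n(s,y) \to [e^{-is\Delta}]\phi(y)$ pointwise. A mixed-norm change of variables then turns the claim into showing $\|G_n\|_{L_s^q L_y^r} \to \|[e^{-it\Delta}]\phi\|_{L_t^q L_x^r}$: indeed the Jacobian $\tfrac{2}{\alpha(\alpha-1)}\xi_n^{2-\alpha}$ in $t$ cancels the factor $\xi_n^{\alpha-2}$ produced by $\xi_n^{(\alpha-2)/q}$ raised to the power $q$, leaving precisely the constant $\l(\tfrac{\alpha^2-\alpha}{2}\r)^{-1/q}$.

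The remaining step is a dominated-convergence argument. The dominating function will come from the classical van der Corput Lemma applied to the $\zeta$-integral: on $\supp(\hat\phi)$ the second $\zeta$-derivative of the phase equals $-2s + O(|s|\xi_n^{-1})$, which for $n$ large is comparable to $|s|$, yielding the uniform stationary bound $|G_n(s,y)| \lesssim_\phi \min\{1, |s|^{-1/2}\}$, together with rapid non-stationary decay when $|y|$ is much larger than $|s|\cdot\diam(\supp\hat\phi)$. The main obstacle is the uniform construction of a dominating function that sits in every non-endpoint $L_s^q L_y^r$ space: since $(q,r)$ ranges with $2/q+1/r=1/2$ and $q,r\in(2,\infty)$, the stationary $|s|^{-1/2}$ decay alone is not integrable, so the non-stationary decay in $|y|$ must be carefully tuned, mirroring the piecewise dominating function $F$ sketched in the proof of Lemma~\ref{Lemma_asymptotic schrodinger}. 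Once this uniform pointwise bound is in hand, dominated convergence upgrades the pointwise convergence of $G_n$ to convergence in $L_s^q L_y^r$, and the Jacobian bookkeeping produces the asserted constant $\l(\tfrac{\alpha^2-\alpha}{2}\r)^{-1/q}$.
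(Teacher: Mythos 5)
Your proposal is correct and follows essentially the same route as the paper: reduce by density to compactly Fourier-supported Schwartz functions, Taylor-expand the phase around $\xi_n$, pass to the Galilean-type variables $(s,y)$ whose Jacobian produces exactly the constant $\l(\tfrac{\alpha^2-\alpha}{2}\r)^{-1/q}$, and conclude by dominated convergence with a van der Corput--type dominating function. The only piece you leave unconstructed --- the explicit $L_t^qL_x^r$ majorant combining the stationary $|s|^{-1/2}$ bound with the non-stationary decay in $|y|$ --- is precisely the piece the paper itself only writes down (as the displayed piecewise function $F$) and defers to the cited references, so your sketch matches the paper's level of detail there.
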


\begin{remark}\label{Remark_asymptotic schrodinger_asymmetric}
As in Remark \ref{Remark_asymptotic schrodinger} the non-precompactness up to symmetries, equivalently the equality
\[\tilde{\mathbf{M}}_{\alpha, q,r}=\l(\frac{\alpha^2-\alpha}{2}\r)^{-\frac{1}{q}} \tilde{\mathbf{M}}_{2,q,r},\]
does not mean the non-existence of extremals. For the sharp constant $\tilde{\mathbf{M}}_{2,q,r}$ with asymmetric pairs $(q,r)$, recalling that the existence of extremals have been proved in \cite{Shao_2009_EJDE}, the known results are pretty few even if there are many excellent works such as \cite{BBCH_2009, Carneiro_2009, Goncalves_2019}. Indeed, up to now, we are only aware of the case $\tilde{\mathbf{M}}_{2,8,4}=2^{-1/4}$ as shown in the aforementioned three papers. As far as we know, there is no higher dimensional result for the asymmetric sharp constant $\tilde{\mathbf{M}}_{2,q,r}$.
\end{remark}

We are not planing to show all the detailed proofs of these two lemmas since the arguments are standard. Instead, there will be some useful references for the readers who are interested in the further details. For the Lemma \ref{Lemma_asymmetric orthogonal}, indeed it is a corollary of the Strichartz-orthogonality estimate \eqref{Prop_Strichartz-orthogonal profiles_1}. This fact may be not as obvious as getting the estimate \eqref{Prop_Strichartz-orthogonal profiles_2} from \eqref{Prop_Strichartz-orthogonal profiles_1}, since $q\neq r$ and they may be not natural numbers at all. However this difficult has been overcame by using the interpolation arguments and some \textit{floor function} techniques, see \cite[Lemma 1.6]{Shao_2009_EJDE} for more details. Also notice that the conclusions in Lemma \ref{Lemma_asymmetric orthogonal} are inequalities instead of equalities compared with the estimate \eqref{Prop_Strichartz-orthogonal profiles_2}.

While Lemma \ref{Lemma_asymmetric asymptotic schrodinger} may seem easy to accept since it is obvious an asymmetric generalization of Lemma \ref{Lemma_asymptotic schrodinger}. Indeed here we only need to give the $L_t^qL_x^r$-dominating function
\begin{equation*}
F(t,x):=\l\{\begin{array}{cc}
C_{\phi}(1+|t|)^{-\frac{3}{2q}}(1+|x|)^{-\frac{q-3}{2q}}, & |x|\lesssim_{\phi} |t|;\\
C_{\phi}(1+|t|)^{-\frac{3}{q}}(1+|x|)^{-\frac{q-3}{q}}, & |x|\gtrsim_{\phi} |t|.
\end{array}\r.
\end{equation*}
As what we have shown in the proof of Lemma \ref{Lemma_asymptotic schrodinger}, the detailed arguments can be founded in \cite[Proposition 7.1]{JPS_2010} and \cite[Remark 1.7]{Shao_2009}, or some earlier papers such as \cite{CCT_2003} and \cite{Tao_2007}.

\section{Extremals for non-endpoint alpha-Strichartz estimate}\label{Sec_Extremals nonendpoint alpha-Strichartz}
In this section, we provide the proof of Theorem \ref{Thm_extremals nonendpoint alpha-Strichartz} by following the arguments in \cite{HS_2012}. It is obvious that we only need to prove the following profile decomposition of non-endpoint $\alpha$-Strichartz version Proposition \ref{Prop_nonendpoint linear profile decomposition}, which indeed is a standard consequence of the aforementioned linear profile decomposition results Proposition \ref{Prop_linear profile decomposition} and Proposition \ref{Prop_Strichartz-orthogonal profiles}.
\begin{prop}\label{Prop_nonendpoint linear profile decomposition}
Let $(u_n)$ be a bounded sequence in $L^2(\bR)$. Then, up to subsequences, there exist a sequence of operators $[T_n^j]$ defined by
\[[T_n^{j}]\phi(x):=[e^{-it_n^j|\nabla|^{\alpha}}]\l[(h_n^j)^{-\frac{1}{2}}\phi\l(\frac{x-x_n^j}{h_n^j}\r)\r]\]
with $(h_n^j, x_n^j, t_n^j) \in \bR_{+}\times\bR\times\bR$ and a sequence of functions $\phi^j\in L^2(\bR)$ such that for every $J\geq1$, we have the profile decomposition
\begin{equation*}
u_n=\sum_{j=1}^{J} [T_n^j]\phi^j+\omega_n^{J},
\end{equation*}
where the decomposition possesses the following properties: firstly the remainder term $\omega_n^{J}$ has vanishing Strichartz norm
\begin{equation}\label{Prop_nonendpoint linear profile decomposition_2}
\lim_{J\to\infty}\limsup_{n\to\infty}\l\|[e^{it|\nabla|^{\alpha}}]\omega_n^{J}\r\|_{L_{t,x}^{2\alpha+2}(\bR^{2})}=0;
\end{equation}
secondly the sequence of operators $[T_n^j]$ satisfies that if $j\neq k$, there holds the limit-orthogonality property
\begin{equation}\label{Prop_nonendpoint linear profile decomposition_3}
[T_n^k]^{-1}[T_n^j]\rightharpoonup 0
\end{equation}
as $n$ goes to infinity in the weak operator topology of $\mathcal{B}(L^2)$; for each $J\geq1$, we have
\begin{equation*}
\lim_{n\to\infty}\l[\|u_n\|_{L^2(\bR)}^2-\l(\sum_{j=1}^{J}\|\phi^j\|_{L^2(\bR)}^2\r)-\|\omega_n^{J}\|_{L^2(\bR)}^2\r]=0;
\end{equation*}
moreover for every $j\neq k$ there holds the Strichartz-orthogonality of profiles
\begin{equation}\label{Prop_nonendpoint Strichartz-orthogonal profiles_1}
\lim_{n\to\infty}\l\|[e^{it|\nabla|^{\alpha}}][T_n^j]\phi^j \cdot [[e^{it|\nabla|^{\alpha}}][T_n^k]\phi^k\r\|_{L_{t,x}^{\alpha+1}(\bR^{2})}=0.
\end{equation}
\end{prop}

\begin{remark}
In this case, without the frequency parameters $\xi_n$, the limit-orthogonality property \eqref{Prop_nonendpoint linear profile decomposition_3} holds up to subsequences if and only if
\[\limsup_{n\to\infty}\l(\frac{h_n^j}{h_n^k}+\frac{h_n^k}{h_n^j} +\frac{|t_n^j-t_n^k|}{(h_n^j)^{\alpha}} + \frac{|t_n^j-t_n^k|}{(h_n^k)^{\alpha}} +\frac{|x_n^j-x_n^k|}{h_n^j}+\frac{|x_n^j-x_n^k|}{h_n^k}\r)=\infty.\]
This conclusion can be seen in the proof of the conditional dislocation property Proposition \ref{Prop_dislocation property}. Note that the condition above is symmetric in the indices $j$ and $k$.
\end{remark}

\begin{proof}[\textbf{Proof of Proposition \ref{Prop_nonendpoint linear profile decomposition}}]
It is not hard to see that the vanishing norm estimate \eqref{Prop_nonendpoint linear profile decomposition_2} follows from the remainder term estimate \eqref{Prop_linear profile decomposition_2} in Proposition \ref{Prop_linear profile decomposition} and Sobolev inequalities. To eliminate the frequency parameters, as shown in the proof of \cite[Theorem 2.4]{HS_2012}, the key point is to deduce the following estimate
\begin{equation}\label{equation_nonendpoint linear profile decomposition_1}
\lim_{|\xi_n|\to\infty}\l\|[e^{it|\nabla|^{\alpha}}][e^{i(\cdot)\xi_n}\phi]\r\|_{L_{t,x}^{2\alpha+2}}=0.
\end{equation}
Then the highly oscillatory terms, which mean the terms $[T_n^j]\phi^j(x)$ with
\[\lim_{n\to\infty}|h_n^j\xi_n^j|=\infty,\]
in Proposition \ref{Prop_linear profile decomposition} can be reorganized into the remainder term. After that, the desired Strichartz-orthogonality \eqref{Prop_nonendpoint Strichartz-orthogonal profiles_1} of these profiles is much easier to established due to the lack of frequency parameters, see also \cite[Lemma 2.7]{HS_2012} for further details. Other conclusions come from Proposition \ref{Prop_linear profile decomposition} and Proposition \ref{Prop_Strichartz-orthogonal profiles} accordingly.

To obtain the estimate \eqref{equation_nonendpoint linear profile decomposition_1}, we can follow similar arguments in the proof of Lemma \ref{Lemma_asymptotic schrodinger}, see also \cite[Theorem 2.4]{HS_2012}. Indeed we could rewrite
\[\l|[e^{it|\nabla|^{\alpha}}][e^{i(\cdot)\xi_n}\phi](x)\r|=\frac{1}{2\pi} \l|\int_{\bR} e^{ix\xi +it\dot{\Phi}_n(\xi)} \hat{\phi}(\xi)\ddd\xi\r|\]
with
\[\dot{\Phi}_n(\xi):=\sum_{m=1}^{\infty} \binom{\alpha}{m} (\xi_n)^{\alpha-m} \xi^m.\]
Notice that by density we can assume $\phi$ to be a Schwartz function with compact Fourier support and the assumption $|\xi_n|\to\infty$ can guarantee the uniform convergency of $\dot{\Phi}_n$ for $n$ large enough. Hence by changing of variables and van der Corput Lemma, the dominated convergence theorem implies the desired conclusion \eqref{equation_nonendpoint linear profile decomposition_1}.
\end{proof}

\bigskip
\subsection*{Acknowledgements}
The authors would like to thank Shuanglin Shao for his valuable conversations. This work was supported by National Natural Science Foundation of China [grant numbers 11871452, 12071052]. The first author acknowledges the support from UCAS Joint Training Program, and this research was completed when the first author visited University of Kansas whose hospitality is also appreciated.

\bigskip\bigskip\bigskip
\footnotesize

\bigskip
\begin{flushleft}
\textsc{Boning Di \\
School of Mathematical Sciences \\
University of Chinese Academy of Sciences \\
Beijing, 100049 \\ P.R. China}

E-mail: \textsf{diboning18@mails.ucas.ac.cn}
\end{flushleft}

\begin{flushleft}
\textsc{Dunyan Yan \\
School of Mathematical Sciences \\
University of Chinese Academy of Sciences \\
Beijing, 100049 \\ P.R. China}

E-mail: \textsf{ydunyan@ucas.ac.cn}
\end{flushleft}

\end{document}